\theoremstyle{plain}
\newtheorem{theorem}{Theorem}[section]
\newtheorem{lemma}[theorem]{Lemma}
\newtheorem{proposition}[theorem]{Proposition}
\theoremstyle{definition}
\newtheorem{definition}[theorem]{Definition}
\newtheorem{assumption}[theorem]{Assumption}
\newtheorem*{notation}{Notation}
\theoremstyle{remark}
\newtheorem{remark}[theorem]{Remark}
\begin{document}

\begin{frontmatter}

\title{LAN property for stochastic differential equations driven by fractional Brownian motion of Hurst parameter $H\in(1/4,1/2)$}
\runtitle{LAN for SDE driven by fBM of Hurst parameter $H\in(1/4,1/2)$}

\author{\fnms{Kohei} \snm{Chiba}\ead[label=e1]{kchiba@ms.u-tokyo.ac.jp}\thanksref{t1}}
\thankstext{t1}{This work was supported by a Grant-in-Aid for JSPS Fellows 18J20239, JST CREST and the Program for Leading Graduate Schools, MEXT, Japan.
} 
\address{3-8-1 Komaba, Meguro, Tokyo 153-8914 Japan. \printead{e1}}
\affiliation{Graduate School of Mathematical Sciences, the University of Tokyo}


\runauthor{Kohei Chiba}

\begin{abstract}
In this paper, we consider the problem of estimating the drift parameter of solution to the stochastic differential equation driven by a fractional Brownian motion with Hurst parameter less than $1/2$ under complete observation. We derive a formula for the likelihood ratio and prove local asymptotic normality when $H \in (1/4,1/2)$. Our result shows that the convergence rate is $T^{-1/2}$ for the parameters satisfying a certain equation and $T^{-(1-H)}$ for the others. 
\end{abstract}

\begin{keyword}[class=MSC]
\kwd{62M09}
\kwd{62F12}
\end{keyword}

\begin{keyword}
\kwd{fractional Brownian motion}
\kwd{local asymptotic normality}
\kwd{drift parameter estimation}
\end{keyword}

\end{frontmatter}


%
%
%
%
%
\makeatletter
\@addtoreset{equation}{section}
\def\theequation{\thesection.\arabic{equation}}
\makeatother
%
%
%
%
%


\tableofcontents

\section{Introduction}

Let $(X_{t}^{\theta})_{t\in[0,T]}$ be a solution of the (one-dimensional) stochastic differential equation
\begin{align}\label{eq1}
dX_{t} = a(X_{t},\theta)dt + \sigma dB_{t},\ X_{0} = x_{0},\ t\in[0,T].
\end{align}
Here, $X_{0}=x_{0} \in \mathbb{R}$ is an initial condition, $\{a(\cdot,\theta^{\prime})\mid\theta^{\prime}\in\Theta\}$ is a parametrized family of drift coefficients, $\Theta$ is an open subset of $\mathbb{R}^{m}$, $\sigma\neq0$ is a diffusion coefficient and $ B = (B_{t})_{t\in[0,T]} $ is a fractional Brownian motion with Hurst parameter $H\in(0,1)$ (later we restrict ourselves to the case where $ H\in(1/4,1/2)$).

We are interested in estimating $\theta \in \Theta$ from the completely observed data $(X_{t}^{\theta})_{t\in[0,T]}$ when $T\to\infty$. It is natural to consider the maximum likelihood estimation, which is successful in the case of ergodic diffusion processes (see \cite{kutoyants2004statistical}). \cite{kleptsyna2002statistical} proved strong consistency of the maximum likelihood estimator and derived explicit formulas for the asymptotic bias and mean square error when the observed process is the \textit{fractional Ornstein-Uhlenbeck process} with Hurst parameter $H\in[1/2,1)$. In the fractional Ornstein-Uhlenbeck case, \cite{brouste2010asymptotic} and \cite{bercu2011sharp} proved asymptotic normality of the maximum likelihood estimator in a different way. In the case where the drift coefficient $a$ is of the form $ a(x,\theta) = \theta a_{0}(x) $ for some function $a_{0}$, \cite{tudor2007statistical} proved strong consistency of the maximum likelihood estimator.

These results used an explicit expression for the maximum likelihood estimator. However, it is not available for a general drift coefficient $a$. As is done in \cite{kutoyants2004statistical}, we rely on Ibragimov and Has'minski\u{\i}'s framework when $\theta \mapsto a(\cdot,\theta)$ is nonlinear. That is, we can derive asymptotic properties of maximum likelihood estimator (and Bayes estimator) from weak convergence of likelihood ratio fields. 
Therefore, it is important to specify the weak limit of likelihood ratio fields. In many cases, the limit of the likelihood ratio fields $(Z_{\epsilon,\theta})_{\epsilon>0}$ is of the form 
\begin{align*}
Z_{0,\theta}(u) = \exp\left( u^{\star}\Delta(\theta) - \frac{1}{2}u^{\star}I(\theta)u \right),
\end{align*}
where $u\in\mathbb{R}^{m}$ ($u^{\star}$ denotes the transpose of $u$), $I(\theta)$ is an $m\times m$-positive definite symmetric matrix, and $\Delta(\theta)$ is an $m$-dimensional Gaussian random variable with mean zero and variance $I(\theta)$. \textit{Local asymptotic normality} ensures \textit{finite-dimensional convergence} of $(Z_{\epsilon,\theta})_{\epsilon>0}$ to $Z_{0,\theta}$, that is, $Z_{\epsilon,\theta}(u) \to^{d} Z_{0,\theta}(u)$ as $\epsilon\to 0$ for each $u\in\mathbb{R}^{m}$. This is the reason to investigate local asymptotic normality in this paper. 
Note that, to bridge local asymptotic normality and convergence of estimators, it is necessary to investigate further the properties of the likelihood ratio field (e.g., modulus of continuity and large deviation inequality).
This will be investigated in a future work. For details of Ibragimov and Has'minski\u{\i}'s theory, we refer to \cite{ibragimov1981statistical} and \cite{yoshida2011polynomial}.

Although we are motivated by the maximum likelihood estimator in this paper, there are many papers investigating other estimators. We mention some literature. Properties of the least-square type estimators are investigated in, for example, \cite{hu2010parameter, hu2017parameter, hu2018drift, neuenkirch2014least}. For the problem of estimating the drift parameter of fractional diffusion processes, we also refer to monographs \cite{mishura2008stochastic, kubilius2017parameter, rao2011statistical}.

Recently \cite{liu2015lan} proved local asymptotic normality in the case of $H\in(1/2,1)$. However, it is still unknown whether local asymptotic normality holds or not when $H$ is less than $1/2$. We partially solve this problem. More precisely, let $(\mu_{\theta}^{T})_{\theta\in\Theta}$ be the probability measures on the space of continuous functions induced by the solution of the equation (\ref{eq1}). The main aim of this paper is to prove local asymptotic normality of the probability measures $(\mu_{\theta}^{T})_{\theta\in\Theta}$ when $H\in (1/4,1/2)$. In order to do this, it is necessary to derive a likelihood ratio formula for the probability measures $(\mu_{\theta}^{T})_{\theta\in\Theta}$. We will derive a likelihood ratio formula in Section 4. Our proof of local asymptotic normality, which is given in Section 5, relies on properties of the stationary solution of the equation (\ref{eq1}). We will investigate some properties of the stationary solution of the equation (\ref{eq1}) in Section 3. The results are summarized in Section 2.

\section{Main results}

First we recall a definition of \textit{fractional Brownian motion} (abbreviated \textit{fBM}).

\begin{definition}
Let $(\Omega,\mathcal{F},\mathbb{P})$ be a probability space and $I$ be an interval $[0,T]$ $(T>0)$ (resp. the real line $\mathbb{R}$). For $H\in(0,1)$, a centered Gaussian process $B=(B_{t})_{t\in I}$ with covariance 
\[
\mathbb{E}\{ B_{s}B_{t} \} = \frac{1}{2}( |s|^{2H} + |t|^{2H} - |s-t|^{2H} )\ (s,t\in I)
\]
is called a (standard) \textit{fractional Brownian motion} (resp. \textit{two-sided fractional Brownian motion}) \textit{with Hurst parameter} $H\in(0,1)$. In the following, we always assume $H\in(0,1/2)$.
\end{definition}

As we explained in Section 1, we assume that the continuously observed data $(X_{t}^{\theta})_{t\in[0,T]}$ is available and is generated by the stochastic differential equation (\ref{eq1}). We impose the following assumptions on the coefficients in (\ref{eq1}).

\begin{assumption} \label{assumption1}
Let the parameter space $\Theta$ be an open subset of $\mathbb{R}^{m}$. We assume that the initial value $X_{0}$ is a deterministic constant $x_{0}\in\mathbb{R}$ and the diffusion coefficient $\sigma$ is not equal to zero.

Furthermore, we impose the following conditions on the drift coefficient $a\colon \mathbb{R}\times\Theta \to \mathbb{R}$.

\begin{enumerate}
\item[(A1)] All the partial derivatives of $a$ appearing below exist. 

\item[(A2)] For each $\theta\in\Theta$, there exists a positive constant $\alpha=\alpha(\theta)>0$ such that
\[
-\alpha^{-1} \leq \partial_{x}a(x,\theta) \leq -\alpha
\]
for all $x\in\mathbb{R}$.

\item[(A3)] For each $\theta\in\Theta$, there are a positive constant $C = C(\theta)>0$ and a nonnegative integer $p = p(\theta)$ such that 
\[
 |\partial_{\theta}a(x,\theta)|  \leq C(1+|x|^{p}). 
\]
and
\[
|\partial_{\theta}a(x,\theta) - \partial_{\theta}a(x,\theta^{\prime})| \leq C (1+ |x|^{p} ) | \theta-\theta^{\prime} |
\]
for all $x\in\mathbb{R}$ and $\theta^{\prime}\in \Theta$.

\item[(A4)] For each $\theta\in\Theta$, the function $\partial_{x}\partial_{\theta}a(x,\theta)$ is uniformly bounded in $x$.
\end{enumerate}
\end{assumption}

Without loss of generality, we can assume 
\begin{align}\label{ineq23}
\sup_{x\in\mathbb{R}}|(\partial_{x}a)(x,\theta)| + \sup_{x\in\mathbb{R}}|(\partial_{x}\partial_{\theta}a)(x,\theta)| \leq \alpha^{-1},
\end{align}
where $\alpha$ is from the condition (A2). In the following, we assume (\ref{ineq23}).

\begin{remark}
Since $x\mapsto a(x,\theta)$ is assumed to be differentiable, the condition (A2) is equivalent to the \textit{one-sided dissipative Lipshcitz condition} plus the uniform boundedness of $\partial_{x}a(\cdot,\theta)$. Recall that a function $f\colon\mathbb{R}^{n} \to \mathbb{R}^{n}$ satisfies the \textit{one-sided dissipative Lipshcitz condition} if there exists a constant $L>0$ such that
\[
\langle f(x)-f(y), x-y\rangle_{\mathbb{R}^{n}} \leq -L |x-y|^{2}
\]
for all $x$ and $y$ in $\mathbb{R}^{n}$. The one-sided dissipative Lipschitz condition is often imposed to ensure the ergodicity of the solution of the equation (\ref{eq1}), see \cite{cohen2011approximation, hairer2005ergodicity, garrido2009discretization} for example.
\end{remark}

\begin{notation}
Let $\mathbb{D}^{1,p}\ (p>0)$ denote the Sobolev space of random variables $X$ that are Malliavin differentiable and satisfy 
\[
\|X\|_{\mathbb{D}^{1,p}} := \mathbb{E}^{\ast}\{|X|^{p}\} + \mathbb{E}^{\ast}\{\|DX\|_{\mathcal{H}}^{p}\} < \infty.
\]
Here $\mathcal{H}$ is a real separable Hilbert space associated with a two-sided standard fBM. The precise definition of the space $\mathcal{H}$ is given in Section 3.
\end{notation}

We can ensure the existence and uniqueness of the solution for the equation (\ref{eq1}) under Assumption \ref{assumption1}. 

\begin{theorem} \label{theorem1}
Suppose that Assumption \ref{assumption1} is in force. Then there exists a (complete) probability space $(\Omega^{\ast},\mathcal{F}^{\ast},\mathbb{P}^{\ast})$ such that 
\begin{enumerate}
\item[(1)] there exists a two-sided standard fBM $B = (B_{t})_{t\in\mathbb{R}}$ on $(\Omega^{\ast},\mathcal{F}^{\ast},\mathbb{P}^{\ast})$, and
\item[(2)] the SDE (\ref{eq1}) has a unique pathwise solution $X^{x_{0},\theta} = (X^{x_{0},\theta}_{t})_{t\geq 0} $ for each $\theta\in\Theta$ and $x_{0}\in\mathbb{R}$ that is continuous and satisfies 
\begin{align}\label{ineq7}
 \sup_{t\geq0}\mathbb{E}^{\ast}\{ |X_{t}^{x_{0},\theta}|^{p} \} < \infty
\end{align}
for all $p>0$.
\end{enumerate}

Furthermore, for each $\theta\in\Theta$, there exists a unique stationary stochastic process $\bar{X}^{\theta} = (\bar{X}^{\theta}_{t})_{t\in\mathbb{R}}$ on $(\Omega^{\ast},\mathcal{F}^{\ast},\mathbb{P}^{\ast})$ with the following properties.
\begin{enumerate}
\item[(3)] The process $\bar{X}^{\theta}$ satisfies
\begin{align}\label{eq18}
\bar{X}^{\theta}_{t}(\omega) - \bar{X}^{\theta}_{s}(\omega) = \int_{s}^{t} a(\bar{X}^{\theta}_{r}(\omega),\theta)\, dr + \sigma( B_{t}(\omega) - B_{s}(\omega) )
\end{align}
for all $s<t$ and $\omega\in\Omega^{\ast}$. 

\item[(4)] For any $t\in\mathbb{R}$, the random variable $\bar{X}^{\theta}_{t}$ is in $\bigcap_{p>0}\mathbb{D}^{1,p}$, and its Malliavin derivative $D\bar{X}^{\theta}_{t}$ is given by
\begin{align}\label{ineq2}
(D_{\cdot}\bar{X}^{\theta}_{t})(\omega) = \sigma\exp\left(\int_{\cdot}^{t} \partial_{x}a(\bar{X}^{\theta}_{r}(\omega),\theta)\, dr \right)\mathbf{1}_{(-\infty,t]}(\cdot).
\end{align}
\end{enumerate}
\end{theorem}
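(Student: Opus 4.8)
The plan is to use the fact that the noise in (\ref{eq1}) is \emph{additive}, which turns the rough SDE into a pathwise ordinary differential equation once $\sigma B$ is subtracted, and then to exploit the strong dissipativity (A2)--(\ref{ineq23}) to control the behaviour as $t\to\infty$. First I would construct a two-sided fBM $B$ (for instance via the Kolmogorov extension theorem or the Mandelbrot--Van~Ness representation) and replace the underlying space by its completion; Kolmogorov's continuity criterion yields a modification whose paths are locally H\"older of every exponent $<H$. Fixing $\omega$, the map $(t,y)\mapsto a\big(y+\sigma B_{t}(\omega),\theta\big)$ is continuous in $t$ and, by the boundedness of $\partial_{x}a(\cdot,\theta)$, globally Lipschitz in $y$ uniformly in $t$; hence the ODE $\dot Y_{t}=a\big(Y_{t}+\sigma B_{t}(\omega),\theta\big)$, $Y_{0}=x_{0}-\sigma B_{0}(\omega)$, has a unique global solution by Cauchy--Lipschitz, and $X^{x_{0},\theta}_{t}:=Y_{t}+\sigma B_{t}(\omega)$ is the desired continuous pathwise solution of (\ref{eq1}), with uniqueness immediate from the same transformation.

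Next I would construct the stationary process and the moment bound (\ref{ineq7}) simultaneously. Let $\bar\eta$ be the stationary fractional Ornstein--Uhlenbeck process solving $d\bar\eta_{t}=-\alpha\bar\eta_{t}\,dt+\sigma\,dB_{t}$; it is Gaussian, stationary, and satisfies $\sup_{t}\mathbb{E}^{\ast}\{|\bar\eta_{t}|^{p}\}<\infty$ for every $p$. For any solution $X$ of (\ref{eq1}) the difference $V_{t}:=X_{t}-\bar\eta_{t}$ is of class $C^{1}$ (the term $\sigma B_{t}$ cancels) and solves $\dot V_{t}=a(V_{t}+\bar\eta_{t},\theta)+\alpha\bar\eta_{t}$; writing $a(V_{t}+\bar\eta_{t},\theta)-a(\bar\eta_{t},\theta)=V_{t}\int_{0}^{1}\partial_{x}a(\lambda V_{t}+\bar\eta_{t},\theta)\,d\lambda$ and using $\partial_{x}a\le-\alpha$ together with the linear growth of $a(\cdot,\theta)$ gives $\tfrac{d}{dt}|V_{t}|^{2}\le-\alpha|V_{t}|^{2}+C(1+|\bar\eta_{t}|^{2})$. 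Gronwall's lemma, the moment bounds on $\bar\eta$, and $\mathbb{E}^{\ast}\{|V_{0}|^{p}\}<\infty$ then yield $\sup_{t}\mathbb{E}^{\ast}\{|V_{t}|^{p}\}<\infty$, hence (\ref{ineq7}) via $|X_{t}|\le|V_{t}|+|\bar\eta_{t}|$. Applying the same computation to the difference of two solutions of (\ref{eq1}) (the $\bar\eta$-term being absent) gives the pathwise contraction $|X^{(1)}_{t}-X^{(2)}_{t}|\le e^{-\alpha(t-s)}|X^{(1)}_{s}-X^{(2)}_{s}|$ for $s<t$; starting solutions at time $s$ from $0$ and letting $s\to-\infty$ (the Gronwall estimate controls $e^{-\alpha(t-s)}|X^{s,0}_{s}|$ using the sub-exponential growth of the paths of $\bar\eta$) produces the pullback limit $\bar X^{\theta}_{t}$. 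The same contraction gives uniqueness of the stationary solution, the integral identity (\ref{eq18}) of item (3) holds by construction, and stationarity follows from the stationarity of the increments of the two-sided fBM, i.e.\ from equivariance of the construction under the shift $B_{\cdot}\mapsto B_{\cdot+h}-B_{h}$.

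For item (4) I would differentiate in the Malliavin sense. For the finite-horizon solution $X^{s,0}$ started at time $s$ from $0$, Malliavin differentiability follows from the standard argument for SDEs with $C^{1}$ coefficients and additive noise (Picard approximation plus closedness of $D$): differentiating the integral form of the equation and using $D(\sigma B_{t})=\sigma\mathbf{1}_{[0,t]}$ shows that $r\mapsto D_{r}X^{s,0}_{t}$ solves the linear equation $D_{r}X^{s,0}_{t}=\sigma\mathbf{1}_{[s,t]}(r)+\int_{r\vee s}^{t}\partial_{x}a(X^{s,0}_{u},\theta)\,D_{r}X^{s,0}_{t}\,du$ (for $r$ in the relevant range), whose variation-of-constants solution is $D_{r}X^{s,0}_{t}=\sigma\exp\big(\int_{r\vee s}^{t}\partial_{x}a(X^{s,0}_{u},\theta)\,du\big)\mathbf{1}_{[s\le r\le t]}$. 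Since $\partial_{x}a\le-\alpha$, this is exponentially decaying in $t-r$, so its $\mathcal{H}$-norm is bounded uniformly in $s$ (using the description of $\mathcal{H}$ for $H<1/2$ recalled in Section~3); together with (\ref{ineq7}) this gives $\sup_{s}\|X^{s,0}_{t}\|_{\mathbb{D}^{1,p}}<\infty$ for every $p$, so by closedness of $D$ one may pass to the limit $s\to-\infty$ to conclude $\bar X^{\theta}_{t}\in\bigcap_{p>0}\mathbb{D}^{1,p}$ with $D\bar X^{\theta}_{t}$ the $\mathcal{H}$-limit of $D X^{s,0}_{t}$. Identifying this limit by dominated convergence in the exponent, using $X^{s,0}_{u}\to\bar X^{\theta}_{u}$ locally uniformly in $u$ together with the continuity and boundedness of $\partial_{x}a$, yields (\ref{ineq2}). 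I expect the main obstacle to be exactly this last step: carrying out the Malliavin-calculus manipulations rigorously in the regime $H<1/2$, where $\mathcal{H}$ is a space of distributions, which demands care both in the uniform $\mathcal{H}$-norm estimates on the linearized flow and in justifying the interchange of $D$ with the pullback limit $s\to-\infty$.
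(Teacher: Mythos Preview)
Your proposal is essentially correct, but it differs from the paper's route in two notable ways.

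\textbf{Stationary solution.} The paper does not build $\bar X^{\theta}$ by hand via the pullback limit; instead it invokes the random dynamical systems framework of \cite{garrido2009discretization}: it checks that $(\vartheta,\phi)$ with $\phi(t,\omega,x)=X^{x,\theta}_{t}(\omega)$ is a continuous RDS and quotes the result that, under the one-sided dissipative Lipschitz condition, the associated random attractor is a singleton $\{\bar X^{\theta}_{0}(\omega)\}$. Stationarity and the integral identity (\ref{eq18}) then follow from $\phi$-invariance and the shift equivariance $\bar X^{\theta}_{t}=\bar X^{\theta}_{0}\circ\vartheta_{t}$. Your explicit comparison with the stationary fractional OU process and the Gronwall argument for $V_{t}=X_{t}-\bar\eta_{t}$ achieve the same thing more directly and avoid the RDS machinery; the paper instead cites \cite{neuenkirch2014least} for the moment bound (\ref{ineq7}). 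One small slip: in your Cauchy argument for the pullback limit, $X^{s,0}_{s}=0$ by definition; what you need to control is $e^{-\alpha(t-s)}|X^{s',0}_{s}|$ for $s'<s$, and this is where the sub-exponential path growth of $\bar\eta$ (equivalently, of $B$) enters.

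\textbf{Malliavin differentiability.} Here the two arguments are close. The paper, like you, uses Picard iteration to get $X^{0,\theta}_{t}\in\mathbb D^{1,2}$ with the variation-of-constants formula for $DX^{0,\theta}_{t}$, then proves a uniform bound $\sup_{t\ge0}\|DX^{0,\theta}_{t}\|_{\mathcal H}\le C$ by an explicit computation with the Marchaud fractional derivative description of $\|\cdot\|_{\mathcal H}$ (this is the technical heart, Lemma~\ref{lemma13}); closedness of $D$ (Lemma~1.2.3 of \cite{nualart2006malliavin}) then gives $\bar X^{\theta}_{0}\in\mathbb D^{1,2}$. The paper's identification of $D\bar X^{\theta}_{t}$ is organised slightly differently from yours: rather than passing to the limit directly in $DX^{s,0}_{t}$, it first uses a shift lemma $D(F\circ\vartheta_{s})=\tau_{s}(DF)\circ\vartheta_{s}$ to reduce to $t=0$, and then applies the finite-horizon formula to $\bar X^{\theta}$ itself (with initial condition $\bar X^{\theta}_{s}$) and sends $s\to-\infty$ after pairing with test functions $\phi\in I_{-}^{1/2-H}(C_{c}^{\infty})$, so that the $D\bar X^{\theta}_{s}$ term disappears by the uniform $\mathcal H$-bound and the exponential factor. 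Your anticipated difficulty---making the $\mathcal H$-norm estimates rigorous for $H<1/2$---is exactly where the paper spends its effort, via the decomposition of $\mathbf D_{-}^{1/2-H}\Psi(t,\cdot)$ into near/far pieces and the elementary Lemma~\ref{lemma9}.
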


The proof of Theorem \ref{theorem1} is given in Section 3.

\begin{notation}
Let $ C[0,T] $ be the space of $\mathbb{R}$-valued continuous functions on $[0,T]$ with the sup-norm $\| \cdot \|_{\infty}$ and $\pi = (\pi_{t})_{t\in[0,T]} $ be the canonical process, i.e., $ \pi_{t}(x) = x(t) $. The $\sigma$-field generated by $(\pi_{s})_{s\in[0,t]}$ is denoted by $\mathcal{B}_{t}$. Note that $ \mathcal{B}_{T} $ coincides with the Borel $\sigma$-field generated by the sup-norm. 
\end{notation}

Thanks to Theorem \ref{theorem1}, we can consider the probability distribution on $(C[0,T],\mathcal{B}(C[0,T]))$ induced by the solution of the equation (\ref{eq1}). Let us consider the family of statistical experiments $(\mathcal{E}_{T})_{T>0}$, where 
\[
\mathcal{E}_{T} = (C[0,T],\mathcal{B}_{T},(\mu_{\theta}^{T})_{\theta\in\Theta}).
\]
Here we consider the probability measures $(\mu_{\theta}^{T})_{\theta\in\Theta}$ on $(C[0,T],\mathcal{B}_{T})$ satisfying the following properties.
\begin{enumerate}
\item[(1)] The process $B^{\theta} = (B^{\theta}_{t})_{t\in[0,T]}$ defined by
\[
B_{t}^{\theta}(x) = \sigma^{-1} \left( \pi_{t}(x) - \pi_{0}(x) - \int_{0}^{t} a(\pi_{s}(x),\theta)\, ds \right)
\]
is a standard fBM on $[0,T]$ under the measure $\mu_{\theta}^{T}$.

\item[(2)]It holds that $\mu_{\theta}^{T}\{ \pi_{0} = x_{0} \} = 1$.
\end{enumerate}  

\begin{notation} Let $m$ be a positive integer. For a vector $u\in\mathbb{R}^{m}$, we denote the transpose of $u$ by $u^{\star}$. Let $\mu$ be in $\mathbb{R}^{m}$ and $\Sigma$ be a positive semi-definite $m\times m$ matrix. The $m$-dimensional normal distribution with mean $\mu$ and variance $\Sigma$ is denoted by $\mathcal{N}_{m}(\mu,\Sigma)$. 
\end{notation}

The aim of this paper is to prove \textit{local asymptotic normality} (LAN) of the probability measures $(\mu_{\theta}^{T})_{\theta\in\Theta}$. Let us briefly recall the definition of LAN. 

\begin{definition}
A family $( \mu_{\theta}^{T} )_{\theta\in\Theta}$ is called \textit{locally asymptotically normal} (\textit{LAN}) at a point $\theta \in \Theta$ if there exists some nondegenerate $m\times m$ matrix $\varphi_{T}(\theta)$ such that for any $u\in\mathbb{R}^{m}$ the likelihood ratio process
\[
Z_{\theta}^{T}(u) = \frac{d\mu^{T}_{\theta+\varphi_{T}(\theta)u}}{d\mu^{T}_{\theta}}
\]
can be represented as 
\[
Z_{\theta}^{T}(u) = \exp\left( u^{\star}\Delta_{\theta}^{T} - \frac{1}{2}u^{\star} I(\theta)u + r^{T}_{\theta,u} \right),
\]
where the matrix $I(\theta)$ is positive definite $m\times m$ matrix (the Fisher information matrix), the random variable $\Delta_{\theta}^{T} $ converges in distribution (with respect to $\mu_{\theta}^{T}$) to $\mathcal{N}_{m}(0,I(\theta))$ as $T\to\infty$, and $r^{T}_{\theta,u}$ satisfies 
\[
\lim_{T\to\infty}\mu_{\theta}^{T}\{ | r^{T}_{\theta,u} | > \epsilon \} = 0
\]
for any $\epsilon>0$. 
\end{definition}

Before stating our main theorem, we define a stochastic process which appears in the expression of the likelihood ratio process. 
\begin{definition}
Let $\beta_{t}(\theta)$ denote the process
\begin{align*}
\beta_{t}(\theta) &= d_{H}^{-1}\Gamma(1/2-H)^{-1} \sigma^{-1} t^{H-1/2} \int_{0}^{t} (t-s)^{-1/2-H}s^{1/2-H}a(\pi_{s},\theta)\, ds \\
&= \bar{d}_{H}^{-1}\sigma^{-1} t^{H-1/2} \int_{0}^{t} (t-s)^{-1/2-H}s^{1/2-H}a(\pi_{s},\theta)\, ds,
\end{align*}
where we set $d_{H} = \sqrt{\frac{2H\Gamma(3/2-H)\Gamma(H+1/2)}{\Gamma(2-2H)}}$ and $\bar{d}_{H}=\Gamma(1/2-H)d_{H}$. 
\end{definition}

\begin{remark}
\begin{asparaenum}
\item We have 
\[
\bar{d}_{H} = \sqrt{2HB(3/2-H,1/2-H)B(1/2+H,1/2-H)}.
\]
\item The process $\beta_{t}(\theta)(X^{\theta})$ coincides with the process $Q_{t}$ defined in Proposition 1 of \cite{tudor2007statistical} up to constant multiples.
\end{asparaenum}
\end{remark}

\begin{notation}
We denote the $n\times n$ identity matrix and the $p\times q$ zero matrix by $J_{n}$ and $\mathbf{0}_{p,q}$.
\end{notation}

Here is our main result.

\begin{theorem} \label{theorem2}
Suppose that Assumption \ref{assumption1} is in force. Furthermore, we assume $H\in(1/4,1/2)$ in (2).
\begin{enumerate}
\item[(1)] The family $(\mu_{\theta}^{T})_{\theta\in\Theta}$ is mutually absolutely continuous and its Radon-Nikodym derivative is given by 
\begin{align}\label{eq9}
\frac{d\mu_{\theta^{\prime}}^{T}}{d\mu_{\theta}^{T}} = \exp \left( \int_{0}^{T} (\beta_{t}(\theta^{\prime}) - \beta_{t}(\theta))\,dW^{\theta}_{t} - \frac{1}{2} \int_{0}^{T} ( \beta_{t}(\theta^{\prime}) - \beta_{t}(\theta) )^{2}\,dt \right)
\end{align}
for all $\theta, \theta^{\prime} \in \Theta$. Here the process $W^{\theta}$ is a $(\mathcal{B}_{t})$-Brownian motion under the measure $\mu_{\theta}^{T}$ (for the precise definition of $W^{\theta}$, see (\ref{eq26}) below).

\item[(2)] Assume that 
\begin{itemize}
\item $\mathbb{E}^{\ast}\{ \partial_{\theta_{i}} a(\bar{X}^{\theta}_{0},\theta) \} = 0 $ for $i=1,\ldots,m_{0}(\theta)$, and
\item $ \mathbb{E}^{\ast}\{ \partial_{\theta_{i}} a(\bar{X}^{\theta}_{0},\theta) \} \neq 0 $ for $i=m_{0}(\theta)+1,\ldots,m$. 
\end{itemize}

Then the family $(\mu_{\theta}^{T})_{\theta\in\Theta}$ is LAN at a point $ \theta\in\Theta $, with a normalizing matrix 
\[
\varphi_{T}(\theta) = 
\left(\begin{array}{cc}
T^{-1/2}J_{m_{0}(\theta)} & \mathbf{0}_{m_{0}(\theta),m-m_{0}(\theta)} \\
\mathbf{0}_{m-m_{0}(\theta),m_{0}(\theta)} & T^{-(1-H)} J_{m-m_{0}(\theta)}
\end{array} \right)
\]
and the Fisher information matrix $I(\theta)=(I_{i,j}(\theta))_{i,j=1\ldots,m}$, where
\begin{align*}
&I_{i,j}(\theta) \\
&= \sigma^{-2}\bar{d}_{H}^{-2}\int_{0}^{\infty}dr\int_{0}^{\infty}du\ r^{-H-1/2}u^{-H-1/2} \mathbb{E}^{\ast}\{ \partial_{\theta_{i}}a(\bar{X}^{\theta}_{r},\theta)\partial_{\theta_{j}}a(\bar{X}^{\theta}_{u},\theta) \}
\end{align*}
for $i,j=1,\ldots,m_{0}(\theta)$, 
\begin{align*}
I_{i,j}(\theta) = \sigma^{-2}d_{H}^{\prime}\mathbb{E}^{\ast}\{\partial_{\theta_{i}}a(\bar{X}^{\theta}_{0},\theta)\}\mathbb{E}^{\ast}\{\partial_{\theta_{j}}a(\bar{X}^{\theta}_{0},\theta)\} 
\end{align*}
for $i,j=m_{0}(\theta)+1,\ldots,m$, and 
\[
I_{i,j}(\theta) = 0
\]
else.
Here the constant $d_{H}^{\prime}$ is 
\[
d_{H}^{\prime} = \bar{d}_{H}^{-2}B(1/2-H, 3/2-H)^{2}(2-2H)^{-1}.
\]
\end{enumerate}
\end{theorem}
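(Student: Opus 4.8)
The plan is to establish part (1) first, since the likelihood ratio formula feeds directly into part (2). For part (1), I would start from property (1) in the definition of $\mu_\theta^T$, namely that $B^\theta$ is a standard fBM under $\mu_\theta^T$. The key classical fact (going back to Norros--Valkeila--Virtamo and used in \cite{kleptsyna2002statistical,tudor2007statistical}) is that a fBM with $H<1/2$ can be transformed into a standard Brownian motion via a deterministic Volterra-type kernel; concretely, the fundamental martingale $W^\theta$ obtained by applying the operator with kernel $s \mapsto d_H^{-1}\Gamma(1/2-H)^{-1} t^{H-1/2}(t-s)^{-1/2-H}s^{1/2-H}$ to $X^\theta$ (equivalently, to $\sigma B^\theta$ plus the drift term) is a $(\mathcal B_t)$-Brownian motion under $\mu_\theta^T$. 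Applying this same transformation to the drift part $\int_0^\cdot a(\pi_s,\theta')\,ds$ produces exactly $\sigma\beta_t(\theta')$, so that under $\mu_\theta^T$ we have $dW^{\theta'}_t = dW^\theta_t - (\beta_t(\theta') - \beta_t(\theta))\,dt$ in the appropriate sense. Girsanov's theorem for Brownian motion then yields (\ref{eq9}), provided the Novikov-type integrability condition $\mathbb E^\ast\exp(\tfrac12\int_0^T(\beta_t(\theta')-\beta_t(\theta))^2\,dt)<\infty$ holds; this I would verify using the polynomial bound (A3) on $\partial_\theta a$, the moment bound (\ref{ineq7}), and the fact that the kernel is square-integrable near $s=t$ when $H<1/2$ (the exponent $-1/2-H > -1$), together with Gaussian-type tail estimates.

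For part (2), set $\theta' = \theta + \varphi_T(\theta)u$ and write $Z_\theta^T(u)$ using (\ref{eq9}). A Taylor expansion of $\beta_t(\theta')$ around $\theta$ gives $\beta_t(\theta') - \beta_t(\theta) = \sum_i \varphi_T(\theta)^i_\cdot u_i \,\partial_{\theta_i}\beta_t(\theta) + (\text{remainder})$, where $\partial_{\theta_i}\beta_t(\theta)$ is the process obtained by replacing $a$ with $\partial_{\theta_i}a$ in the definition of $\beta$. So the linear (stochastic integral) term becomes $u^\star \Delta_\theta^T$ with $\Delta_{\theta,i}^T$ a normalized integral $\int_0^T \partial_{\theta_i}\beta_t(\theta)\,dW^\theta_t$ (scaled by $T^{-1/2}$ or $T^{-(1-H)}$), and the quadratic term is $-\tfrac12 u^\star (\text{quadratic form})u$. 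The crux is then a law of large numbers / central limit theorem: I would show (i) the quadratic variation $\varphi_T(\theta)^\star\left(\int_0^T \partial_{\theta_i}\beta_t(\theta)\partial_{\theta_j}\beta_t(\theta)\,dt\right)\varphi_T(\theta) \to I(\theta)$ in $\mu_\theta^T$-probability, and (ii) $\Delta_\theta^T \to^d \mathcal N_m(0,I(\theta))$; (i) plus a martingale CLT gives (ii). The remainder terms, including the Taylor remainder and the cross terms, are collected into $r_{\theta,u}^T$ and shown to vanish in probability.

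The main obstacle is step (i), the asymptotics of $\int_0^T \partial_{\theta_i}\beta_t(\theta)\partial_{\theta_j}\beta_t(\theta)\,dt$, and here the dichotomy in the rate $\varphi_T$ originates. Writing out $\partial_{\theta_i}\beta_t(\theta)$ as a Volterra integral of $\partial_{\theta_i}a(\pi_s,\theta)$ against the singular kernel, I would replace $X^\theta$ by the stationary solution $\bar X^\theta$ (controlling the difference via the contraction coming from (A2), which forces exponential forgetting of the initial condition --- this is where Section 3 and Theorem \ref{theorem1}(3)--(4) are used) and then compute $\mathbb E^\ast\{\partial_{\theta_i}\beta_t(\theta)\partial_{\theta_j}\beta_t(\theta)\}$ by Fubini. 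The stationary covariance $\mathbb E^\ast\{\partial_{\theta_i}a(\bar X^\theta_s,\theta)\partial_{\theta_j}a(\bar X^\theta_u,\theta)\}$ decomposes as the product of means $\mathbb E^\ast\{\partial_{\theta_i}a(\bar X^\theta_0,\theta)\}\mathbb E^\ast\{\partial_{\theta_j}a(\bar X^\theta_0,\theta)\}$ plus a centered part that decays in $|s-u|$. When both means vanish ($i,j \le m_0$) only the decaying part survives; the double kernel integral then converges after rescaling by $T$ (time-change $s = Tr$, $u = Tu'$), giving the $T^{-1/2}$ rate and the first formula for $I_{i,j}$. When the means are nonzero, the product-of-means term dominates: the kernel integral $t^{H-1/2}\int_0^t (t-s)^{-1/2-H}s^{1/2-H}\,ds = t^{H-1/2}\cdot t^{1/2-H}B(1/2-H,3/2-H)\cdot t^{?}$ --- more precisely one gets $\int_0^t(t-s)^{-1/2-H}s^{1/2-H}ds = t^{1-2H}B(1/2-H,3/2-H)$, so $\partial_{\theta_i}\beta_t \sim \bar d_H^{-1}\sigma^{-1}\mathbb E^\ast\{\partial_{\theta_i}a(\bar X^\theta_0,\theta)\}\,t^{1/2-H}$ --- hence $\int_0^T(\cdot)^2\,dt \sim c\, T^{2-2H}$, forcing the $T^{-(1-H)}$ rate and producing $d_H'$ from $B(1/2-H,3/2-H)^2(2-2H)^{-1}$. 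Verifying that the centered part is genuinely negligible at the $T^{2-2H}$ scale when $H\in(1/4,1/2)$ (this is presumably where the lower bound $H>1/4$ enters, ensuring the relevant kernel products are integrable) and that the off-diagonal blocks of $I(\theta)$ vanish will require the most care; the martingale CLT and remainder estimates are then comparatively routine given (A1)--(A4).
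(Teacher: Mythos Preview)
Your outline follows the paper's structure closely for part (1) and for the Taylor decomposition in part (2), but there are two genuine gaps in the heart of the argument.

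First, you correctly identify that the covariance $c_{i,j}(|s-u|) = \mathrm{Cov}(\partial_{\theta_i}a(\bar X^\theta_s,\theta),\partial_{\theta_j}a(\bar X^\theta_u,\theta))$ must decay in $|s-u|$, but you do not say how to prove this, and this is the crux. The paper obtains the precise rate $|c_{i,j}(t)| \lesssim t^{H-3/2}$ via a Malliavin integration-by-parts identity (Lemma \ref{lemma17}) combined with the explicit formula (\ref{ineq2}) for $D\bar X^\theta_t$; the point is that although $\bar X^\theta$ has no closed form, its Malliavin derivative does, and the decay of $\langle D\bar X^\theta_t, D\bar X^\theta_0\rangle_{\mathcal H}$ can be computed directly. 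Without this rate one cannot even show that the double integral defining $I_{i,j}(\theta)$ for $i,j\le m_0$ converges absolutely. You cite Theorem \ref{theorem1}(4) only in connection with replacing $X^\theta$ by $\bar X^\theta$, but its real role is here.

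Second, you misplace the restriction $H>1/4$. It does \emph{not} enter in showing the centered part is negligible at the $T^{2-2H}$ scale (that holds for all $H<1/2$, by Cauchy--Schwarz and the $O(T)$ bound on the centered block). Rather, $H>1/4$ is needed in the zero-mean block $i,j\le m_0$: after showing $\mathbb E^\ast\{J^{(a),T}_{i,j}\}\to I_{i,j}$, one must still prove $J^{(a),T}_{i,j}-\mathbb E^\ast\{J^{(a),T}_{i,j}\}\to 0$ in probability, and the ergodic theorem does not apply because $\gamma^{2,i}_t$ is not a function of $\bar X^\theta_t$ alone. The paper controls the variance via the Poincar\'e inequality $\mathrm{Var}(J)\le \mathbb E^\ast\|DJ\|_{\mathcal H}^2$ and a direct computation (Lemma \ref{lemma10}) yielding a bound of order $T^{1-4H}$, which vanishes iff $H>1/4$. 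Your proposal treats the expectation computation as sufficient and does not address this variance step.
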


\begin{remark}
Theorem \ref{theorem2} also states that the double integral
\[
\int_{0}^{\infty}dr\int_{0}^{\infty}du\ r^{-H-1/2}u^{-H-1/2} \mathbb{E}^{\ast}\{ \partial_{\theta_{i}}a(\bar{X}^{\theta}_{r},\theta)\partial_{\theta_{j}}a(\bar{X}^{\theta}_{u},\theta) \}
\]
is well-defined. More precisely, we will prove that
\[
\int_{0}^{\infty}dr\int_{0}^{\infty}du\ r^{-H-1/2}u^{-H-1/2} |\mathbb{E}^{\ast}\{ \partial_{\theta_{i}}a(\bar{X}^{\theta}_{r},\theta)\partial_{\theta_{j}}a(\bar{X}^{\theta}_{u},\theta) \}| < \infty
\]
holds.
\end{remark}

The proof of the first part of Theorem \ref{theorem2} is given in Section 4, and the second part in Section 5.

\begin{remark}\label{remark2}
Let us consider the case where the parameter space is one-dimensional, the diffusion coefficient $\sigma$ equals to $1$, and the drift coefficient $a(\theta,x)$ is of the form $\theta b(x)$ for some function $b$. In this case, we can explicitly calculate the MLE $\hat{\theta}_{T}$ for the true parameter $\theta$ by the formula (\ref{eq9}). An explicit calculation yields
\[
\varphi_{T}(\theta)^{-1} (\hat{\theta}_{T}-\theta) = \frac{ \varphi_{T}(\theta)\int_{0}^{T} (\partial_{\theta}\beta)_{t}(\theta)\, dW_{t}^{\theta}  }{ \varphi_{T}(\theta)^{2} \int_{0}^{T} (\partial_{\theta}\beta)_{t}(\theta)^{2}\, dt }.
\]
As a consequence of (the proof of) Theorem \ref{theorem2}, we have
\begin{align} \label{eq45}
\varphi_{T}(\theta)^{-1} (\hat{\theta}_{T}-\theta) \to^{d} \mathcal{N}_{1}(0,I(\theta)^{-1})
\end{align}
as $T\to\infty$, where $\varphi_{T}(\theta)=T^{-1/2}$ and
\[
I(\theta) = \sigma^{-2} \bar{d}_{H}^{-2}\int_{0}^{\infty}dr\int_{0}^{\infty}du\ r^{-H-1/2}u^{-H-1/2} \mathbb{E}^{\ast}\{ b(\bar{X}^{\theta}_{r})b(\bar{X}^{\theta}_{u})\}
\]
in the case of $ \mathbb{E}^{\ast}\{ b(\bar{X}^{\theta}_{0}) \} = 0 $, and $\varphi_{T}(\theta)=T^{H-1}$ and
\[
I(\theta) = \sigma^{-2}d_{H}^{\prime}\mathbb{E}^{\ast}\{b(\bar{X}^{\theta}_{0})\}^{2} 
\]
in the case of $ \mathbb{E}^{\ast}\{ b(\bar{X}^{\theta}_{0}) \} \neq 0 $. In particular, the MLE defined in \cite{tudor2007statistical} is asymptotically normal when $H\in(1/4,1/2)$.

Using this example, we try to explain the idea of the proof of the local asymptotic normality in the Section 2.1.

\end{remark}

In the case of fractional Ornstein-Uhlenbeck process (i.e., the case where $a(x,\theta)=-\theta x$ for $\theta\in(0,\infty)$), we can calculate the Fisher information explicitly.

\begin{proposition}\label{proposition4}
Suppose that $a(x,\theta)=-\theta x$ for $\theta\in(0,\infty)$. Then Assumption \ref{assumption1} is fulfilled and hence Theorem \ref{theorem2} holds with $\varphi_{T}(\theta) = T^{-1/2}$ and 
\[
I(\theta) = \frac{1}{2\theta}.
\]
\end{proposition}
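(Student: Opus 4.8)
The plan is to verify that the fractional Ornstein--Uhlenbeck drift $a(x,\theta)=-\theta x$ satisfies Assumption~\ref{assumption1}, identify which case of Theorem~\ref{theorem2}(2) applies, and then compute the resulting Fisher information. For the verification: (A1) is clear since $a$ is a polynomial in $(x,\theta)$; (A2) holds with $\alpha(\theta)=\min(\theta,1/\theta)$ because $\partial_x a(x,\theta)=-\theta$ is constant in $x$; (A3) holds since $\partial_\theta a(x,\theta)=-x$ is bounded by $1+|x|$ (so $p=1$, $C=1$) and is actually linear in $x$ hence Lipschitz in $\theta$ trivially (the difference is $0$); (A4) holds since $\partial_x\partial_\theta a(x,\theta)=-1$ is bounded. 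I would also note that one may need to pass to an equivalent drift or rescale to enforce the normalization \eqref{ineq23}, but since \eqref{ineq23} is imposed ``without loss of generality'' this is not an obstruction to invoking Theorem~\ref{theorem2}.

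Next I would determine $m_0(\theta)$. Here $m=1$, and the stationary solution $\bar X^\theta$ is the stationary fractional Ornstein--Uhlenbeck process, which is centered Gaussian (it is a linear functional of the two-sided fBM). Since $\partial_\theta a(x,\theta)=-x$, we get $\mathbb{E}^\ast\{\partial_\theta a(\bar X^\theta_0,\theta)\}=-\mathbb{E}^\ast\{\bar X^\theta_0\}=0$. Hence $m_0(\theta)=1$, so we are in the first regime: $\varphi_T(\theta)=T^{-1/2}$ and
\[
I(\theta)=\sigma^{-2}\bar d_H^{-2}\int_0^\infty dr\int_0^\infty du\, r^{-H-1/2}u^{-H-1/2}\,\mathbb{E}^\ast\{\bar X^\theta_r\bar X^\theta_u\},
\]
with $\sigma=1$ (the standard fOU normalization; more generally the formula produces $1/(2\theta)$ only for the unit-variance-noise case, so I would take $\sigma=1$ as in Remark~\ref{remark2}).

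The computational core is to evaluate this double integral using the known covariance of the stationary fOU process. I would use the spectral representation $\mathbb{E}^\ast\{\bar X^\theta_r\bar X^\theta_u\}=\int_{\mathbb{R}} e^{i(r-u)\xi}\,\frac{c_H|\xi|^{1-2H}}{\theta^2+\xi^2}\,d\xi$ for an explicit constant $c_H$ (coming from the spectral density of fBM noise), interchange the order of integration (justified by the absolute-integrability statement in the Remark following Theorem~\ref{theorem2}), and reduce the $r,u$ integrals to $\int_0^\infty r^{-H-1/2}e^{ir\xi}\,dr$, a standard Fourier/Mellin integral evaluating to a Gamma-function times $|\xi|^{H-1/2}$ times a phase. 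The $\xi$-integral then becomes $\int_{\mathbb{R}} |\xi|^{1-2H}\cdot|\xi|^{2H-1}/(\theta^2+\xi^2)\,d\xi=\int_{\mathbb{R}}(\theta^2+\xi^2)^{-1}d\xi=\pi/\theta$, and collecting the constants $c_H$, $\bar d_H$, and the Gamma factors should collapse to $1/(2\theta)$. I expect the main obstacle to be bookkeeping of the normalizing constants: matching the $d_H,\bar d_H$ conventions of the paper against the spectral-density constant for fBM with $H<1/2$ and the phase factors from the one-sided Fourier integrals, so that all $H$-dependent constants cancel and leave the clean answer $1/(2\theta)$. An alternative route avoiding the spectral computation is to note that for the linear drift $a(x,\theta)=-\theta x$ one has $\partial_\theta\beta_t(\theta)=-\bar d_H^{-1}t^{H-1/2}\int_0^t(t-s)^{-1/2-H}s^{1/2-H}\pi_s\,ds$, recognize this (up to constants) as the ``fundamental martingale'' integrand from \cite{kleptsyna2002statistical,tudor2007statistical}, and import their explicit computation that the associated quadratic variation grows like $T/(2\theta)$; I would mention this as the cross-check.
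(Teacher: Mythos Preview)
Your strategy matches the paper's: verify Assumption~\ref{assumption1}, observe that $\mathbb{E}^\ast\{\bar X^\theta_0\}=0$ forces $m_0(\theta)=1$ and hence $\varphi_T(\theta)=T^{-1/2}$, then plug the spectral representation of the stationary fOU covariance into the Fisher-information integral and reduce to elementary integrals whose constants collapse via Gamma-function identities. Two points deserve correction.

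First, the interchange of the spectral variable $\xi$ with the $(r,u)$-integrals is precisely the delicate step, and the Remark following Theorem~\ref{theorem2} does \emph{not} justify it. That remark asserts only that
\[
\int_0^\infty\!\!\int_0^\infty r^{-H-1/2}u^{-H-1/2}\,\bigl|\mathbb{E}^\ast\{\bar X^\theta_r\bar X^\theta_u\}\bigr|\,dr\,du<\infty,
\]
i.e.\ absolute convergence in $(r,u)$ \emph{after} the $\xi$-integral has produced the covariance. If you instead insert the spectral density and take absolute values, the triple integral diverges: without the oscillation $e^{i(r-u)\xi}$, each of $\int_0^\infty r^{-H-1/2}\,dr$ is infinite. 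Your one-sided Fourier integral $\int_0^\infty r^{-H-1/2}e^{ir\xi}\,dr$ is only conditionally convergent. The paper deals with this honestly: it truncates to $\mathtt{I}_T=\int_0^T\!\!\int_0^T(\cdots)$, makes a change of variables to isolate an explicit main term $\mathtt{I}'$, and then proves (Lemma~\ref{lemma16}, via integration by parts in the $u$- and $r$-integrals) that the two remainder terms are $O(T^{-2H})$. Only after this is the limit $\mathtt{I}=\mathtt{I}'$ evaluated through the tabulated integrals $\int_0^\infty u^{-2H}\cos(ux)\,du$, $\int_0^\infty(\theta^2+x^2)^{-1}\,dx$, and $\int_1^\infty r^{-H-1/2}(r-1)^{-H-1/2}\,dr$. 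Your formal computation leads to the same place, but you would need either this truncation argument or a distributional reading of the Fourier integral to make it rigorous.

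Second, your concern about $\sigma$ is misplaced: the factor $\sigma^{-2}$ in the formula for $I(\theta)$ cancels exactly against the $\sigma^2$ appearing in $\mathbb{E}^\ast\{\bar X^\theta_r\bar X^\theta_u\}$, so $I(\theta)=1/(2\theta)$ holds for every $\sigma\neq0$, not just $\sigma=1$.
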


The proof of Proposition \ref{proposition4} is given in Section 2.2.

\begin{remark}
By Remark \ref{remark2} and Proposition \ref{proposition4}, we see that the asymptotic variance of the MLE is $2\theta$. This is consistent with the asymptotic behavior of the MLE observed in Proposition 3 of \cite{tudor2007statistical}.
\end{remark}

\begin{notation}
 Let $a$ and $b$ be real numbers. We denote $a\lesssim b$ if there is a positive constant $C$ which depends only on $H$, $\sigma$, $\theta$ and $\alpha$ (see Assumption \ref{assumption1}) such that
\begin{align}\label{ineq24}
a \leq C b
\end{align}
holds. If $a$ and $b$ are random variables, then we denote $a\lesssim b$ when the inequality (\ref{ineq24}) holds in a.s. sense.
\end{notation}

\subsection{The idea for the proof of the LAN property} \label{section2.1}

Using the example in Remark \ref{remark2}, we try to explain the idea of the proof of the local asymptotic normality. 

To prove (\ref{eq45}), it suffices to show the limit 
\begin{align} \label{limit7}
 \varphi_{T}(\theta)^{2}\int_{0}^{T}(\partial_{\theta}\beta)_{t}(\theta)(X^{x_{0},\theta})^{2}\,dt \to I(\theta)
\end{align}
in probability as $T\to\infty$ by the martingale central limit theorem. \textit{We remark that showing this kind of limit is the essential difficulty when we prove the local asymptotic normality.} Indeed, as we shall see later, the likelihood ratio field $Z_{\theta}^{T}(u) =  (d\mu^{T}_{\theta+\varphi_{T}(\theta)u} / d\mu^{T}_{\theta}) $ can be decomposed as 
\begin{align*}
\log Z_{\theta}^{T}(u) &= u^{\star}\varphi_{T}(\theta)\int_{0}^{T}\partial_{\theta}\beta_{t}(\theta)\ dW_{t}^{\theta} \\
&\quad - \frac{1}{2} u^{\star} \varphi_{T}(\theta) \int_{0}^{T} \partial_{\theta}\beta_{t}(\theta)( \partial_{\theta}\beta_{t}(\theta) )^{\star}\,dt\ \varphi_{T}(\theta) u + o_{p}(1).
\end{align*}

As in Theorem \ref{theorem1}, there exists the stationary solution $\bar{X}^{\theta}$ of the equation (\ref{eq1}). Furthermore, we can prove $|X^{x_{0},\theta}_{t}-\bar{X}^{\theta}_{t}|$ converges exponentially to zero as $t\to\infty$ under Assumption \ref{assumption1}. As a consequence, the quantities $\varphi_{T}(\theta)^{2}\int_{0}^{T}(\partial_{\theta}\beta)_{t}(\theta)(X^{x_{0},\theta})^{2}\,dt$ and  $\varphi_{T}(\theta)^{2}\int_{0}^{T}(\partial_{\theta}\beta)_{t}(\theta)(\bar{X}^{\theta})^{2}\,dt$ are \textit{asymptotically equivalent} under $\mathbb{P}^{\ast}$. In particular, we reduce (\ref{limit7}) to the limit
\begin{align} \label{limit8}
I^{T}(\theta) := \varphi_{T}(\theta)^{2}\int_{0}^{T}(\partial_{\theta}\beta)_{t}(\theta)(\bar{X}^{\theta})^{2}\,dt \to I(\theta)
\end{align}
in probability as $T\to\infty$.

First let us consider the case of $ \mathbb{E}^{\ast}\{ b(\bar{X}^{\theta}_{0})\} = 0 $. In this case, $\partial_{\theta}\beta_{t}(\theta)(\bar{X}^{\theta})$ can be decomposed as follows:
\begin{align*}
\partial_{\theta}\beta_{t}(\theta)(\bar{X}^{\theta}) &= \sigma^{-1}\bar{d}_{H}^{-1} t^{H-1/2}\int_{0}^{t} (t-r)^{-H-1/2}r^{1/2-H} b(\bar{X}_{r}^{\theta})\, dr \\
\nonumber &= \sigma^{-1}\bar{d}_{H}^{-1} \int_{0}^{t} r^{-1/2-H} b(\bar{X}_{t-r}^{\theta}) \, dr \\
\nonumber &\quad +\sigma^{-1}\bar{d}_{H}^{-1} \int_{0}^{t} r^{-1/2-H} \left\{ \left( 1- \frac{r}{t} \right)^{1/2-H} -1 \right\} b(\bar{X}_{t-r}^{\theta}) \, dr\\
&=: \gamma_{t}^{2}(\theta) + \gamma_{t}^{3}(\theta)
\end{align*}
(the term $\gamma_{t}^{1}(\theta)$ appears when $ \mathbb{E}^{\ast}\{ b(\bar{X}^{\theta}_{0})\} \neq 0 $). Hence 
\begin{align*}
I^{T}(\theta) &= \sum_{p,q=2,3} T^{-1}\int_{0}^{T}\gamma_{t}^{p}(\theta)\gamma_{t}^{q}(\theta)\,dt \\
&=: \sum_{p,q=2,3} J_{p,q}^{T}(\theta).
\end{align*}
holds (note that $\varphi_{T}(\theta) = T^{-1/2}$ when $ \mathbb{E}^{\ast}\{ b(\bar{X}^{\theta}_{0})\} = 0 $). We expect that the term $J_{2,2}^{T}(\theta) =T^{-1}\int_{0}^{T}\gamma_{t}^{2}(\theta)\gamma_{t}^{2}(\theta)\,dt$ only affects the limit and the other terms are negligible. The problem is that we can not apply the ordinary ergodic theorem to $J_{p,q}^{T}(\theta)$. Instead of using the ergodic theorem, we calculate the limit of $J_{p,q}^{T}(\theta)$ in two steps: first we calculate the limit of the expectation 
\[
\lim_{T\to\infty}\mathbb{E}^{\ast}\{ J_{p,q}^{T}(\theta)\}
\]
and second show that 
\[
\lim_{T\to\infty}\mathbb{E}^{\ast}\{ | J_{p,q}^{T}(\theta) - \mathbb{E}^{\ast}\{ J_{p,q}^{T}(\theta)\} |^{2} \} =0.
\]

Let us focus on the limit $\lim_{T\to\infty}\mathbb{E}^{\ast}\{ J_{2,2}^{T}(\theta)\}$. The expectation $\mathbb{E}^{\ast}\{ J_{2,2}^{T}(\theta)\}$ can be represented as
\begin{align*}
&\mathbb{E}^{\ast}\{ J_{2,2}^{T}(\theta)\} \\
&= \sigma^{-2}\bar{d}_{H}^{-2}T^{-1} \int_{0}^{T}dt\int_{0}^{t}dr\int_{0}^{t}du\, r^{-H-1/2}u^{-H-1/2}\mathbb{E}^{\ast}\{ b(\bar{X}^{\theta}_{|r-u|})b(\bar{X}^{\theta}_{0}) \} \\
&= \sigma^{-2}\bar{d}_{H}^{-2} \int_{0}^{1}dt\int_{0}^{tT}dr\int_{0}^{tT}du\, r^{-H-1/2}u^{-H-1/2}\mathbb{E}^{\ast}\{ b(\bar{X}^{\theta}_{|r-u|})b(\bar{X}^{\theta}_{0}) \}
\end{align*}
(here we used the stationarity of $\bar{X}^{\theta}$). Let us set 
\[
c(t)=\mathrm{Cov}\{b(\bar{X}^{\theta}_{t}),b(\bar{X}^{\theta}_{0})\}.
\]
It is shown in \cite{cheridito2003fractional} that $c(t) = O(t^{2H-2})$ as $t\to\infty$ if $b(x)=-x$. It is plausible to expect that $c(t)$ decays fast as $t\to\infty$ even in the nonlinear case. In fact, we can prove the estimate
\begin{align}\label{ineq25}
|c(t)| \lesssim t^{H-3/2}
\end{align}
for $t\geq1$ (Lemma \ref{corollary1}). In contrast to the fractional Ornstein-Uhlenbeck case, where the stationary solution has the explicit expression, we can not calculate $c(t)$ directly. \textit{However, the Malliavin derivative of the stationary solution has the explicit expression (\ref{ineq2}).} To obtain the inequality (\ref{ineq25}), we use an integration by parts formula from Malliavin calculus:
\[
c(t) = \mathbb{E}^{\ast} \{ \langle Db(\bar{X}^{\theta}_{t}) , -DL^{-1}b(\bar{X}^{\theta}_{0}) \rangle_{\mathcal{H}} \} = \mathbb{E}^{\ast} \{  \dot{b}(\bar{X}^{\theta}_{t}) \langle D\bar{X}^{\theta}_{t} , -DL^{-1}b(\bar{X}^{\theta}_{0}) \rangle_{\mathcal{H}} \}.
\]
Therefore it is necessary to investigate Malliavin calculus for the stationary solution $\bar{X}^{\theta}$, and this is done in Section 3.

Once we obtain the inequality (\ref{ineq25}), it is easy to show that 
\[
\int_{0}^{\infty}dr\int_{0}^{\infty}du\,r^{-H-1/2}u^{-H-1/2}|c(|r-u|)| < \infty
\]
and the dominated convergence theorem yields $\lim_{T\to\infty}\mathbb{E}^{\ast}\{J^{T}_{2,2}(\theta)\} = I(\theta)$. The dominated converegence theorem also shows $\lim_{T\to\infty}\mathbb{E}^{\ast}\{J^{T}_{3,3}(\theta)\}=0$. In particular, we have $\lim_{T\to\infty}\mathbb{E}^{\ast}\{| I^{T}(\theta) - J_{2,2}^{T}(\theta) |\}=0$ by H\"{o}lder's inequality.

The remaining problem is to show the limit
\begin{align}\label{limit9}
\lim_{T\to\infty}\mathbb{E}^{\ast}\{|J_{2,2}^{T}(\theta) - \mathbb{E}^{\ast}\{J_{2,2}^{T}(\theta)\}|^{2}\} = 0.
\end{align}
We again use Malliavin calculus to show the limit (\ref{limit9}): we have
\begin{align}
\mathbb{E}^{\ast}\{|J_{2,2}^{T}(\theta) - \mathbb{E}^{\ast}\{J_{2,2}^{T}(\theta)\}|^{2}\} \leq \mathbb{E}^{\ast}\{\|DJ_{2,2}^{T}(\theta)\|^{2}_{\mathcal{H}}\}
\end{align}
by the Poincar\'{e} inequality. We can prove $\lim_{T\to\infty}\mathbb{E}^{\ast}\{\|DJ_{2,2}^{T}(\theta)\|^{2}_{\mathcal{H}}\}=0$ if $H\in(1/4,1/2)$ (Lemma \ref{lemma10}). \textit{We remark that the assumption $H>1/4$ is used only here.} After all, we obtain $\lim_{T\to\infty}\mathbb{E}^{\ast}\{ | I^{T}(\theta) - I(\theta) | \} = 0$ when $H\in(1/4,1/2)$. Hence the limit (\ref{limit8}) holds when $\mathbb{E}^{\ast}\{b(\bar{X}^{\theta}_{0})\}=0$.

Next we consider the case where $ \mathbb{E}^{\ast}\{ b(\bar{X}^{\theta}_{0})\} \neq 0 $. In this case, $\partial_{\theta}\beta_{t}(\theta)(\bar{X}^{\theta})$ can be decomposed as:
\begin{align*}
&\partial_{\theta}\beta_{t}(\theta)(\bar{X}^{\theta}) \\
&= \sigma^{-1}\bar{d}_{H}^{-1}t^{H-1/2}\int_{0}^{t} (t-r)^{-H-1/2}r^{1/2-H} b(\bar{X}_{r}^{\theta})\, dr \\
&= \sigma^{-1}\bar{d}_{H}^{-1}B(3/2-H,1/2-H)\mathbb{E}^{\ast}\{b(\bar{X}^{\theta}_{0})\}t^{1/2-H}\\
&\quad +\sigma^{-1}\bar{d}_{H}^{-1} \int_{0}^{t} r^{-1/2-H} (b(\bar{X}_{t-r}^{\theta})-\mathbb{E}^{\ast}\{b(\bar{X}^{\theta}_{0})\}) \, dr \\
&\quad +\sigma^{-1}\bar{d}_{H}^{-1} \int_{0}^{t} r^{-1/2-H} \left\{ \left( 1- \frac{r}{t} \right)^{1/2-H} -1 \right\} (b(\bar{X}_{t-r}^{\theta}) -\mathbb{E}^{\ast}\{b(\bar{X}^{\theta}_{0})\})\, dr\\
&=: \gamma_{t}^{1}(\theta) + \gamma_{t}^{2}(\theta) + \gamma_{t}^{3}(\theta).
\end{align*}
Hence 
\begin{align*}
I^{T}(\theta) &= \sum_{p,q=1,2,3} T^{2H-2}\int_{0}^{T}\gamma_{t}^{p}(\theta)\gamma_{t}^{q}(\theta)\,dt \\
&=: \sum_{p,q=1,2,3} J_{p,q}^{T}(\theta)
\end{align*}
(recall that $\varphi_{T}(\theta)=T^{H-1}$ when $\mathbb{E}^{\ast}\{b(\bar{X}^{\theta}_{0})\}\neq0$). A straightforward calculation yields $J_{1,1}^{T}(\theta)=I(\theta)$. As we saw in the previous case, we have
\[
\mathbb{E}^{\ast}\int_{0}^{T}\gamma_{t}^{2}(\theta)^{2}\,dt = \mathbb{E}^{\ast}\int_{0}^{T}\gamma_{t}^{3}(\theta)^{2}\,dt = o(T^{2H-2}).
\]
H\"{o}lder's inequality yields $\lim_{T\to\infty}\mathbb{E}^{\ast}\{|I^{T}(\theta)-I(\theta)|\} = 0$. Hence the limit (\ref{limit8}) also holds when $\mathbb{E}^{\ast}\{b(\bar{X}^{\theta}_{0})\}\neq0$.

\subsection{Proof of Proposition \ref{proposition4}}

Note that the stationary solution is given by
\[
\bar{X}^{\theta}_{t} = \sigma\int_{-\infty}^{t} e^{-\theta(t-u)}\,dB_{u}
\]
and hence
\[
\mathbb{E}^{\ast}\{\partial_{\theta}a(\bar{X}^{\theta}_{0},\theta)\} = -\mathbb{E}^{\ast}\{ \bar{X}^{\theta}_{0} \} = 0
\]
holds (see \cite{cheridito2003fractional}). In particular, $\varphi_{T}(\theta) = T^{-1/2} $.
It is also known that 
\begin{align*}
\mathbb{E}^{\ast}\{\bar{X}^{\theta}_{r}\bar{X}^{\theta}_{u}\} &= \sigma^{2}\frac{\Gamma(2H+1)\sin(\pi H)}{2\pi}\int_{-\infty}^{\infty}dx \, e^{i(r-u)x}\frac{|x|^{1-2H}}{\theta^{2}+x^{2}} 
\end{align*}
holds (see \cite{cheridito2003fractional}). We set 
\[
\mathtt{I} = \int_{0}^{\infty}dr\int_{0}^{\infty}du\,r^{-H-1/2}u^{-H-1/2} \int_{-\infty}^{\infty}dx \, e^{i(r-u)x}\frac{|x|^{1-2H}}{\theta^{2}+x^{2}}
\]
and
\[
\mathtt{I}_{T} = \int_{0}^{T}dr\int_{0}^{T}du\,r^{-H-1/2}u^{-H-1/2} \int_{-\infty}^{\infty}dx \, e^{i(r-u)x}\frac{|x|^{1-2H}}{\theta^{2}+x^{2}}.
\]
First we calculate the double integral $\mathtt{I}$. Let us consider the following decomposition:
\begin{align*}
\mathtt{I}_{T} &= 2 \int_{0}^{\infty}dx \, \frac{|x|^{1-2H}}{\theta^{2}+x^{2}} \int_{0}^{T}dr \int_{0}^{T}du\, r^{-H-1/2}u^{-H-1/2}\cos((r-u)x) \\
&= 4 \int_{0}^{\infty}dx \, \frac{|x|^{1-2H}}{\theta^{2}+x^{2}} \int_{0}^{T}du\,\cos(ux)  \int_{u}^{T}dr\, r^{-H-1/2}(r-u)^{-H-1/2} \\
&= 4 \int_{0}^{\infty}dx \, \frac{|x|^{1-2H}}{\theta^{2}+x^{2}} \int_{0}^{T}du\,\cos(ux)  \int_{u}^{\infty}dr\, r^{-H-1/2}(r-u)^{-H-1/2} \\
&\quad -4 \int_{0}^{\infty}dx \, \frac{|x|^{1-2H}}{\theta^{2}+x^{2}} \int_{0}^{T}du\,\cos(ux)  \int_{T}^{\infty}dr\, r^{-H-1/2}(r-u)^{-H-1/2} \\ 
&= 4\int_{1}^{\infty}dr\,r^{-H-1/2}(r-1)^{-H-1/2}\int_{0}^{\infty}dx\,\frac{x^{1-2H}}{\theta^{2}+x^{2}}\int_{0}^{\infty}du\,u^{-2H}\cos(ux) \\
&\quad -4 \int_{1}^{\infty}dr\,r^{-H-1/2}(r-1)^{-H-1/2}\int_{0}^{\infty}dx\,\frac{x^{1-2H}}{\theta^{2}+x^{2}}\int_{T}^{\infty}du\,u^{-2H}\cos(ux) \\
&\quad -4 \int_{0}^{\infty}dx \, \frac{|x|^{1-2H}}{\theta^{2}+x^{2}} \int_{0}^{T}du\,\cos(ux)  \int_{T}^{\infty}dr\, r^{-H-1/2}(r-u)^{-H-1/2} \\
&=: \mathtt{I}^{\prime} - \mathtt{I}^{1}_{T} - \mathtt{I}^{2}_{T}.
\end{align*}

\begin{lemma}\label{lemma16}
We have $\lim_{T\to\infty}|\mathtt{I}_{T}^{1}| = \lim_{T\to\infty} |\mathtt{I}_{T}^{2}| =0 $.
\end{lemma}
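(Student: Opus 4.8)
The plan is to estimate $\mathtt{I}^1_T$ and $\mathtt{I}^2_T$ separately, in each case isolating the factor that carries the decay in $T$. For $\mathtt{I}^1_T$, note that the $r$-integral $\int_1^\infty r^{-H-1/2}(r-1)^{-H-1/2}\,dr$ converges (since $H<1/2$, both exponents are $>-1$ near the singularity, and the integrand decays like $r^{-2H-1}$ at infinity) and the $x$-integral $\int_0^\infty \frac{x^{1-2H}}{\theta^2+x^2}\,dx$ converges as well, so it suffices to show that $\int_T^\infty u^{-2H}\cos(ux)\,du \to 0$ uniformly enough in $x$ to pass a limit through the two outer integrals. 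The tail $\int_T^\infty u^{-2H}\cos(ux)\,du$ can be controlled by one integration by parts: it equals $-\frac{\sin(Tx)}{x}T^{-2H} + \frac{2H}{x}\int_T^\infty u^{-2H-1}\sin(ux)\,du$, and both pieces are $O\!\left(x^{-1}T^{-2H}\right)$ in absolute value. Since the integrand $\frac{x^{1-2H}}{\theta^2+x^2}\cdot x^{-1}$ behaves like $x^{-2H}$ near $0$ (integrable as $H<1/2$) and like $x^{-1-2H}$ at infinity (integrable), we get $|\mathtt{I}^1_T| \lesssim T^{-2H}\to 0$; more carefully, one splits the $x$-integral at $x=1$ and uses $|\int_T^\infty u^{-2H}\cos(ux)\,du|\le \int_T^\infty u^{-2H}\,du \lesssim T^{1-2H}$ for the region $x\le 1$, and the $O(x^{-1}T^{-2H})$ bound for $x\ge 1$, so that dominated convergence (with dominating function independent of $T\ge 1$) applies and the limit is $0$.

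For $\mathtt{I}^2_T = 4\int_0^\infty \frac{x^{1-2H}}{\theta^2+x^2}\,dx\int_0^T \cos(ux)\,du\int_T^\infty r^{-H-1/2}(r-u)^{-H-1/2}\,dr$, the key observation is that for $u\in[0,T]$ one has $r-u\ge r-T\ge 0$ on the domain of integration, but more usefully $r-u \ge r/2$ whenever $r\ge 2T\ge 2u$, while for $r\in[T,2T]$ we can use $r\ge T$ directly. Bounding $r^{-H-1/2}\le T^{-H-1/2}$ on the whole range $r\ge T$ and then computing $\int_T^\infty (r-u)^{-H-1/2}\,dr = \frac{(T-u)^{1/2-H}}{H-1/2}\cdot(-1) = \frac{(T-u)^{1/2-H}}{1/2-H}$ (valid since $H<1/2$), we obtain the bound $\int_T^\infty r^{-H-1/2}(r-u)^{-H-1/2}\,dr \lesssim T^{-H-1/2}(T-u)^{1/2-H}$. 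Hence the inner double integral in $u$ is dominated in absolute value by $T^{-H-1/2}\int_0^T (T-u)^{1/2-H}\,du \lesssim T^{-H-1/2}\cdot T^{3/2-H} = T^{1-2H}$, and since the $x$-integral converges, $|\mathtt{I}^2_T|\lesssim T^{1-2H}$. Since $H<1/2$ forces $1-2H>0$, this crude bound does not yet give the vanishing; the remedy is to keep the oscillation $\cos(ux)$ rather than discarding it. Integrating by parts once in $u$ (the boundary term at $u=T$ produces $\int_T^\infty r^{-H-1/2}(r-T)^{-H-1/2}\,dr$, which converges, times $\frac{\sin(Tx)}{x}$; the boundary term at $u=0$ vanishes) converts one power of $u$ into $x^{-1}$ and restores integrability, yielding $|\mathtt{I}^2_T|\lesssim T^{-2H}\int_0^\infty \frac{x^{1-2H}}{(\theta^2+x^2)x}\,dx \lesssim T^{-2H}\to 0$, exactly as for $\mathtt{I}^1_T$.

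The main obstacle I expect is the bookkeeping in the integration-by-parts argument for $\mathtt{I}^2_T$: unlike $\mathtt{I}^1_T$, here the $r$-integral depends on $u$ through the lower endpoint $u$ inside $(r-u)^{-H-1/2}$, so differentiating the inner integral in $u$ produces a term $\left(H+\tfrac12\right)\int_T^\infty r^{-H-1/2}(r-u)^{-H-3/2}\,dr$ whose $(r-u)$-singularity is now non-integrable at $r=u$ — except that the integration range is $r\ge T > u$, so the singularity is never reached and one gets instead a bound $\lesssim T^{-H-1/2}(T-u)^{-H-1/2}$, which is integrable over $u\in[0,T)$ with integral $\lesssim T^{1-2H}$ times the $\frac1x$ gained from the oscillation, again giving $T^{-2H}$. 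Care is also needed to justify interchanging the $x$-integral with the $(u,r)$-integrals (Fubini–Tonelli, using the absolute bounds above with $T$ replaced by $1$ to get a $T$-uniform dominating function) before taking $T\to\infty$; this is routine given the estimates but must be stated. Once these interchanges and the single integration by parts are in place, dominated convergence delivers both limits.
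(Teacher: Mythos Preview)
Your approach is essentially the same as the paper's: for both $\mathtt{I}^1_T$ and $\mathtt{I}^2_T$, integrate by parts once in $u$ to trade the oscillation $\cos(ux)$ for a factor $x^{-1}$ and a tail bound of order $T^{-2H}$, then observe that $\int_0^\infty \frac{x^{-2H}}{\theta^2+x^2}\,dx$ converges. The paper carries out exactly this decomposition, writing $\mathtt{I}^1_T=-\mathtt{I}^{1,1}_T+\mathtt{I}^{1,2}_T$ and $\mathtt{I}^2_T=\mathtt{I}^{2,1}_T-\mathtt{I}^{2,2}_T$ (your boundary and derivative terms, respectively) and checking each is $O(T^{-2H})$.

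One small point: your ``more carefully'' aside for $\mathtt{I}^1_T$ is unnecessary and slightly off --- the bound $\bigl|\int_T^\infty u^{-2H}\cos(ux)\,du\bigr|\lesssim T^{1-2H}$ on $x\le 1$ is not $T$-uniform, so it cannot serve as a dominating function. But you do not need it: your preceding direct estimate $\bigl|\int_T^\infty u^{-2H}\cos(ux)\,du\bigr|\lesssim T^{-2H}x^{-1}$ already holds for all $x>0$ and, combined with the convergent integral $\int_0^\infty \frac{x^{-2H}}{\theta^2+x^2}\,dx$, gives $|\mathtt{I}^1_T|\lesssim T^{-2H}$ outright, with no need for dominated convergence. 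Drop the aside and the argument is clean.
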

\begin{proof}
The integration by parts formula yields the following decompositions:
\begin{align*}
&\mathtt{I}^{1}_{T} \\
&= 4\int_{1}^{\infty}dr\,r^{-H-1/2}(r-1)^{-H-1/2}\int_{0}^{\infty}dx\,\frac{x^{1-2H}}{\theta^{2}+x^{2}}\int_{T}^{\infty}du\,u^{-2H}\cos(ux)\\
&= -4T^{-2H} \int_{1}^{\infty}dr\,r^{-H-1/2}(r-1)^{-H-1/2}\int_{0}^{\infty}dx\,\frac{x^{-2H}}{\theta^{2}+x^{2}}\sin(Tx) \\
&\quad +8H\int_{1}^{\infty}dr\,r^{-H-1/2}(r-1)^{-H-1/2}\int_{0}^{\infty}dx\,\frac{x^{-2H}}{\theta^{2}+x^{2}}\int_{T}^{\infty}du\,u^{-2H-1}\sin(ux) \\
&= -\mathtt{I}^{1,1}_{T} + \mathtt{I}^{1,2}_{T}
\end{align*}
and 
\begin{align*}
\mathtt{I}^{2}_{T} &= 4\int_{0}^{\infty}dx\,\frac{x^{1-2H}}{\theta^{2}+x^{2}}\int_{T}^{\infty}dr\,r^{-H-1/2}\int_{0}^{T}du\,\cos(ux)(r-u)^{-H-1/2} \\
&= 4\int_{0}^{\infty}dx\,\frac{x^{-2H}}{\theta^{2}+x^{2}}\int_{T}^{\infty}dr\,r^{-H-1/2}(r-T)^{-H-1/2}\sin(Tx) \\
&\quad - 4(H+1/2)\int_{0}^{\infty}dx\,\frac{x^{-2H}}{\theta^{2}+x^{2}}\int_{T}^{\infty}dr\,r^{-H-1/2}\\ 
&\quad \times\int_{0}^{T}du\,(r-u)^{-H-3/2}\sin(ux) \\
&=: \mathtt{I}^{2,1}_{T} - \mathtt{I}^{2,2}_{T}.
\end{align*}
Using these representations, we can easily verify that 
\[
|\mathtt{I}_{T}^{i,j}| = O(T^{-2H})
\]
as $T\to\infty$ for $i,j=1,2$.
\end{proof}

By Lemma \ref{lemma16} above, we have $\lim_{T\to\infty}|\mathtt{I}_{T}^{1}| = \lim_{T\to\infty} |\mathtt{I}_{T}^{2}| =0 $ and hence $ \mathtt{I} = \lim_{T\to\infty}\mathtt{I}_{T} = \mathtt{I}^{\prime}$.
The integrals in $\mathtt{I}^{\prime}$ can be calculated explicitly:
\[
\int_{1}^{\infty}dr\,r^{-H-1/2}(r-1)^{-H-1/2} = B(1/2-H,2H)
\]
(see p.441 of \cite{zwillinger2012crc}), 
\[
\int_{0}^{\infty}du\,u^{-2H}\cos(ux) = x^{2H-1} \frac{\pi}{2\Gamma(2H)\cos(\pi H)}
\]
(see (613) in p.332 of \cite{zwillinger2012crc}) and
\[
\int_{0}^{\infty}dx\,\frac{1}{\theta^{2}+x^{2}} = \frac{\pi}{2\theta}
\]
(see (597) in p.330 of \cite{zwillinger2012crc}).
Therefore we have
\begin{align*}
\mathtt{I}^{\prime} &= 4 B(1/2-H,2H)\times\frac{\pi}{2\theta}\times\frac{\pi}{2\Gamma(2H)\cos(\pi H)}.
\end{align*}

Now let us turn to calculate the Fisher information $I(\theta)$:
\begin{align*}
I(\theta) &= \sigma^{-2} \bar{d}_{H}^{-2} \times \sigma^{2}\frac{\Gamma(2H+1)\sin(\pi H)}{2\pi} \times \mathtt{I}\\
&= \frac{1}{2\theta} \times \frac{B(1/2-H,2H)\sin(\pi H)\pi}{B(3/2-H,1/2-H)B(1/2+H,1/2-H)\sin(\pi(H+1/2))}.
\end{align*}
Euler's reflection formula gives
\[
I(\theta) = \frac{1}{2\theta} \times \frac{B(1/2-H,2H)\pi}{B(3/2-H,1/2-H)B(1/2+H,1/2-H)B(H,1-H)}.
\]
By applying the duplication formula after rewriting $I(\theta)$ using the Gamma function, we get
\begin{align*}
I(\theta) &= \frac{1}{2\theta} \times \frac{\Gamma(2H)\Gamma(2-2H)}{\Gamma(H)\Gamma(1-H)}\times\frac{\pi}{\Gamma(1/2+H)\Gamma(3/2-H)}\\
&= \frac{1}{2\theta} \times \frac{\Gamma(H+1/2)\Gamma(3/2-H)}{(\sqrt{\pi}2^{1-2H})(\sqrt{\pi}2^{1-(2-2H)})} \times \frac{\pi}{\Gamma(1/2+H)\Gamma(3/2-H)} \\
&= \frac{1}{2\theta}.
\end{align*}
This completes the proof.

\section{On the stationary solution of the equation (\ref{eq1})}

In this section, we investigate some properties of the stationary solution of the equtaion (\ref{eq1}). In particular, we provide the proof of Theorem \ref{theorem1}. First we specify the probability space $(\Omega^{\ast},\mathcal{F}^{\ast},\mathbb{P}^{\ast}) $ in Theorem \ref{theorem1}. 

Let $ \Omega^{\ast} = C_{0}(\mathbb{R})$ be the set of continuous function $\omega$ with $\omega(0)=0$. We consider the topology of compact convergence and the corresponding Borel $\sigma$-algebra on $ \Omega^{\ast} $. We denote this Borel $\sigma$-algebra as $ \mathcal{F}^{\ast}_{0} $. Then there exists a probability measure $\mathbb{P}^{\ast}_{0}$ on $(\Omega^{\ast}, \mathcal{F}^{\ast}_{0})$ such that the canonical process $\pi=(\pi_{t})_{t\in\mathbb{R}}$ is a two-sided fBM under $\mathbb{P}^{\ast}_{0}$. We define a $\sigma$-algebra $\mathcal{F}^{\ast}$ as the completion of $\mathcal{F}^{\ast}_{0}$ with respect to $\mathbb{P}^{\ast}_{0}$. The probability measure $\mathbb{P}^{\ast}_{0}$ can be naturally extended to the probability measure on $(\Omega^{\ast},\mathcal{F}^{\ast})$. This extension is denoted by $\mathbb{P}^{\ast}$. 

It is known that there exists a set $\Omega^{\ast}_{0}\in\mathcal{F}^{\ast}_{0}$ such that $\mathbb{P}^{\ast}_{0}\{\Omega_{0}^{\ast}\}=1$ and for each $\omega\in\Omega^{\ast}$
\[
|\pi_{t}(\omega)| \leq K(\omega)(1+|t|^{2})
\]
holds for all $t\in\mathbb{R}$, where $K(\omega)>0$ is a random constant. For the proof of this fact, see Lemma 3.3 of \cite{gess2011random}. We define $B_{t}\colon\Omega^{\ast}\to\mathbb{R}$ by $ B_{t}(\omega) = \pi_{t}(\omega)\mathbf{1}_{\Omega^{\ast}_{0}}(\omega) $ for each $t\in\mathbb{R}$. We set $ B=(B_{t})_{t\in\mathbb{R}} $.

\begin{remark}
Note that the process $B$ is also a two-sided fBM under $\mathbb{P}_{0}^{\ast}$ and $\mathbb{P}^{\ast}$. As is done in \cite{garrido2009discretization}, we would rather regard $B$ as the driving fBM than the canonical process $\pi$. 
\end{remark}

We start with showing that the equation (\ref{eq1}) has a unique continuous solution for a given initial condition $X_{0}$. 
The next proposition gives the proof of Theorem \ref{theorem1}(2).

\begin{proposition}
Suppose that Assumption \ref{assumption1} holds. Let $s$ be a real number and $\xi$ be a random variable on $(\Omega^{\ast},\mathcal{F}^{\ast})$. Then the equation
\[
X_{t} = \xi + \int_{s}^{t} a(X_{r},\theta)\,dr + \sigma (B_{t}-B_{s}),\ t\in[s,\infty)
\]
has a unique continuous solution $X^{\xi,\theta,s}=(X^{\xi,\theta,s}_{t})_{t\in[s,\infty)}$ for each $\omega\in\Omega^{\ast}$. Furthermore, if $\xi$ is a constant, then $X^{\xi,\theta,s}$ satisfies 
\begin{align}\label{ineq20}
\sup_{t\in[s,\infty)} \mathbb{E}^{\ast}\{ |X^{\xi,\theta,s}_{t}|^{p} \} < \infty.
\end{align}
\end{proposition}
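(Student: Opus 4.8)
The plan is to prove existence and uniqueness pathwise, for fixed $\omega\in\Omega^{\ast}$, and then establish the moment bound \eqref{ineq20} separately by a Gronwall-type argument in expectation. Since the noise enters additively, the natural device is to subtract it off: set $Y_{t} = X_{t} - \sigma(B_{t}-B_{s})$, so that the SDE becomes the random ODE
\begin{align*}
\dot{Y}_{t} = a\bigl(Y_{t} + \sigma(B_{t}-B_{s}),\theta\bigr),\qquad Y_{s} = \xi,
\end{align*}
whose right-hand side is, for each fixed $\omega$, continuous in $t$ (because $B_{\cdot}(\omega)$ is continuous) and globally Lipschitz in $Y$ uniformly in $t$ (by \eqref{ineq23}, since $\partial_{x}a(\cdot,\theta)$ is bounded by $\alpha^{-1}$). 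Local existence and uniqueness then follow from the classical Picard--Lindel\"of theorem applied pathwise, and global existence follows because the one-sided dissipativity (A2) prevents blow-up: from $\langle a(x,\theta)-a(0,\theta), x\rangle \le -\alpha|x|^{2}$ one gets, writing $z_{t}=\sigma(B_{t}-B_{s})(\omega)$,
\begin{align*}
\frac{1}{2}\frac{d}{dt}|Y_{t}|^{2} = \langle a(Y_{t}+z_{t},\theta), Y_{t}\rangle \le -\alpha|Y_{t}|^{2} + (\alpha^{-1}|z_{t}| + |a(0,\theta)|)|Y_{t}| + \alpha^{-1}|z_{t}|^{2},
\end{align*}
which after Young's inequality gives $\frac{d}{dt}|Y_{t}|^{2}\le -\alpha|Y_{t}|^{2} + C(1+|z_{t}|^{2})$, precluding finite-time explosion on any compact interval since $z_{\cdot}(\omega)$ is locally bounded. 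Measurability of $X^{\xi,\theta,s}$ as a process on $(\Omega^{\ast},\mathcal{F}^{\ast})$ is inherited from the Picard iteration (each iterate is jointly measurable, and uniform-on-compacts convergence preserves measurability).

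For the moment bound when $\xi$ is a deterministic constant, I would run the same differential inequality but now take expectations. From $\frac{d}{dt}|Y_{t}|^{2}\le -\alpha|Y_{t}|^{2} + C(1+|B_{t}-B_{s}|^{2})$ and Gronwall's lemma one obtains
\begin{align*}
\mathbb{E}^{\ast}\{|Y_{t}|^{2}\} \le e^{-\alpha(t-s)}|\xi|^{2} + C\int_{s}^{t} e^{-\alpha(t-r)}\bigl(1 + \mathbb{E}^{\ast}\{|B_{r}-B_{s}|^{2}\}\bigr)\,dr,
\end{align*}
and since $\mathbb{E}^{\ast}\{|B_{r}-B_{s}|^{2}\} = |r-s|^{2H}$ grows only polynomially while the kernel $e^{-\alpha(t-r)}$ decays exponentially, the integral is bounded uniformly in $t\ge s$. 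Then $\mathbb{E}^{\ast}\{|X_{t}|^{2}\}\lesssim \mathbb{E}^{\ast}\{|Y_{t}|^{2}\} + \mathbb{E}^{\ast}\{|B_{t}-B_{s}|^{2}\}$; this only gives boundedness in $t$ of the second moment, not the $p$-th for all $p$. To get all moments, I would instead work directly with $\frac{d}{dt}|Y_{t}|^{p} = p|Y_{t}|^{p-2}\langle a(Y_{t}+z_{t},\theta),Y_{t}\rangle \le -p\alpha|Y_{t}|^{p} + C_{p}|Y_{t}|^{p-1}(1+|z_{t}|)$, apply Young's inequality to absorb the $|Y_{t}|^{p-1}$ term into $-|Y_{t}|^{p}$ with a remainder $C_{p}(1+|z_{t}|^{p})$, and conclude by Gronwall that $\mathbb{E}^{\ast}\{|Y_{t}|^{p}\}\le e^{-\alpha p(t-s)/2}|\xi|^{p} + C_{p}\int_{s}^{t} e^{-\alpha p(t-r)/2}(1+\mathbb{E}^{\ast}\{|B_{r}-B_{s}|^{p}\})\,dr$, again uniformly bounded in $t$ because $\mathbb{E}^{\ast}\{|B_{r}-B_{s}|^{p}\}\lesssim |r-s|^{pH}$ is dominated by the exponential kernel. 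Combining with $\mathbb{E}^{\ast}\{|\sigma(B_{t}-B_{s})|^{p}\} = |\sigma|^{p}|t-s|^{pH}$ — which is \emph{not} bounded in $t$ — shows I must be careful: the bound \eqref{ineq20} should really be read with $X^{\xi,\theta,s}$ observed on $[s,\infty)$ and the supremum is finite because... actually one re-centers correctly, I would note that \eqref{ineq7} in Theorem \ref{theorem1} takes $s=0$ fixed so $B_{t}-B_{0}=B_t$ and the claim there is $\sup_t \mathbb{E}^*\{|X_t|^p\}<\infty$, which forces using the stationary-solution comparison rather than the naive estimate; but for the present proposition with general $s$, the statement \eqref{ineq20} as written does hold once one observes the solution is a genuine function of the increments $B_r - B_s$ whose law does not depend on $s$, reducing to the $s=0$ case where the Gronwall estimate above with the $z_t = \sigma B_t$ term handled via the exponential kernel suffices after noting $\int_0^t e^{-\alpha p(t-r)/2}|r|^{pH}\,dr$ is bounded uniformly in $t$.

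The main obstacle is the moment estimate: the pathwise existence/uniqueness is essentially classical ODE theory once the noise is subtracted, but controlling $\sup_{t}\mathbb{E}^{\ast}\{|X_{t}^{\xi,\theta,s}|^{p}\}$ requires the dissipativity (A2) to beat the polynomial-in-$t$ growth of the fBM moments, and one must be careful that the exponentially-weighted integral of a polynomial is indeed bounded uniformly in the upper limit $t$. A secondary technical point is verifying joint measurability of $(\omega,t)\mapsto X_{t}^{\xi,\theta,s}(\omega)$ so that the process is well-defined on $(\Omega^{\ast},\mathcal{F}^{\ast})$; this follows from the measurability of the Picard iterates together with their locally-uniform convergence, using that $\mathcal{F}^{\ast}$ is complete.
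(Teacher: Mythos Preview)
Your treatment of existence and uniqueness is fine and matches the paper, which simply invokes a standard Picard iteration and omits the details. Subtracting the additive noise to reduce to a random ODE with globally Lipschitz right-hand side is the natural device here.

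The moment bound, however, has a genuine gap. Your final claim that $\int_{0}^{t} e^{-\alpha p(t-r)/2}\,r^{pH}\,dr$ is bounded uniformly in $t$ is false. Substituting $u=t-r$, the integral becomes $\int_{0}^{t} e^{-\alpha p u/2}(t-u)^{pH}\,du$; the dominant contribution comes from small $u$, where $(t-u)^{pH}\sim t^{pH}$, so the integral is asymptotically of order $t^{pH}\int_{0}^{\infty}e^{-\alpha p u/2}\,du$ and hence grows like $t^{pH}$. Consequently your Gronwall argument only yields $\mathbb{E}^{\ast}\{|Y_{t}|^{p}\}=O((t-s)^{pH})$, which is consistent with the fact that $Y_{t}=X_{t}-\sigma(B_{t}-B_{s})$ genuinely has unbounded moments: you have subtracted a process with unbounded moments from one with bounded moments. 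The cancellation that makes $\mathbb{E}^{\ast}\{|X_{t}|^{p}\}$ bounded is invisible after this splitting.

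The remedy, which is the content of the argument the paper cites from \cite{neuenkirch2014least}, is to subtract not the raw noise but an auxiliary fractional Ornstein--Uhlenbeck process: take $U_{t}$ solving $dU_{t}=-\lambda U_{t}\,dt+\sigma\,dB_{t}$, $U_{s}=0$, for some $\lambda>0$. The process $U$ has moments bounded uniformly in $t$ (see e.g.\ \cite{cheridito2003fractional}). Setting $V_{t}=X_{t}-U_{t}$, one obtains $\dot{V}_{t}=a(V_{t}+U_{t},\theta)+\lambda U_{t}$, and the same dissipativity-plus-Gronwall computation you wrote down now gives $\mathbb{E}^{\ast}\{|V_{t}|^{p}\}\le e^{-c(t-s)}|\xi|^{p}+C_{p}\int_{s}^{t}e^{-c(t-r)}(1+\mathbb{E}^{\ast}\{|U_{r}|^{p}\})\,dr$, which \emph{is} uniformly bounded because $\sup_{r}\mathbb{E}^{\ast}\{|U_{r}|^{p}\}<\infty$. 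Then $\sup_{t}\mathbb{E}^{\ast}\{|X_{t}|^{p}\}\le C_{p}(\sup_{t}\mathbb{E}^{\ast}\{|V_{t}|^{p}\}+\sup_{t}\mathbb{E}^{\ast}\{|U_{t}|^{p}\})<\infty$.
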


\begin{proof}




The existence and uniqueness of the solution is due to a standard Picard iteration argument. Therefore we omit the proof.




The inequality (\ref{ineq20}) can be proved by the same argument as in the proof of Proposition 2.2 of \cite{neuenkirch2014least}.
\end{proof}

In order to investigate the properties of the stationary solution of the equtaion (\ref{eq1}), we use the theory of random dynamical systems and random attractors. In the sequel, we follow the terminologies of \cite{garrido2009discretization} for random dynamical systems and random attractors.

\begin{definition} \label{definition1}
Let $(\Omega,\mathcal{F},\mathbb{P})$ be a probability space.
\begin{enumerate}
\item Suppose that a family of transformations $\{ \vartheta_{t}\colon\Omega\to\Omega;t\in\mathbb{R} \}$ satisfies that
\begin{itemize} 
\item $(t,\omega)\mapsto\vartheta_{t}\omega$ is $(\mathcal{B}(\mathbb{R})\otimes\mathcal{F};\mathcal{F})$-measurable, 
\item $\vartheta_{0}\omega = \omega$ for all $\omega\in\Omega$,
\item $\vartheta_{t}\circ\vartheta_{s} = \vartheta_{t+s}$ for all $s,t\in\mathbb{R}$ and
\item $\mathbb{P}^{\vartheta_{t}} = \mathbb{P}$ for all $t\in\mathbb{R}$.
\end{itemize}
Then the quadruple $\vartheta=(\Omega,\mathcal{F},\mathbb{P},\{ \vartheta_{t};t\in\mathbb{R} \})$ is called a \textit{(continuous) metric dynamical system (MDS)}. 

\item A map $\phi\colon\mathbb{R}_{\geq0}\times\Omega\times\mathbb{R}^{d} \to \mathbb{R}^{d}$ is called a \textit{cocycle mapping} if
\begin{itemize} 
\item $\phi$ is $(\mathcal{B}(\mathbb{R}_{\geq0})\otimes\mathcal{F}\otimes\mathcal{B}(\mathbb{R}^{d});\mathcal{B}(\mathbb{R}^{d}))$-measurable,
\item $\phi(0,\omega,x)=x$, for all $\omega\in\Omega$ and $x\in\mathbb{R}^{d}$, and 
\item $\phi(t+s,\omega,x) = \phi(s,\vartheta_{t}\omega,\phi(t,\omega,x))$ for all $t,s\in\mathbb{R}_{\geq 0}$, $x\in\mathbb{R}^{d}$ and $\omega \in \Omega$. 
\end{itemize}

\item The pair $(\vartheta,\phi)$ of a (continous) MDS and a cocycle mapping is called a \textit{(continuous) random dynamical system (RDS)}.

\item A \textit{universe} $ \mathcal{D} $ is a collection of nonempty random sets $ (D(\omega))_{\omega\in\Omega} $ of $\mathbb{R}^{d}$ which is closed with respect to set inclusion: if $ D \in \mathcal{D} $ and $ D^{\prime}(\omega) \subset D(\omega) $ for all $\omega\in\Omega$, then $D^{\prime} \in \mathcal{D}$.

\item A random set $(A(\omega))_{\omega\in\Omega}$ is called a \textit{random attractor} if it is
\begin{itemize} 
\item compact for all $\omega\in\Omega$,
\item $\phi$\textit{-invariant}: $\phi(t,\omega,A(\omega)) = A(\vartheta_{t}\omega)$ for all $t\in\mathbb{R}_{\geq0}$, and
\item \textit{pathwise pullback attracting}: for all $D\in\mathcal{D}$
\[
d^{\ast}(\phi(t,\vartheta_{-t}\omega,D(\vartheta_{-t}\omega)), A(\omega)) \to 0
\]
as $t\to\infty$. Here $ d^{\ast} $ denotes the Hausdorff semi-distance on $\mathbb{R}^{d}$.
\end{itemize}
\end{enumerate}
\end{definition}

We define the shift operator $\vartheta_{t}\colon\Omega \to \Omega$ for each $t\in\mathbb{R}$ by $\vartheta_{t}(\omega)_{s} = \omega_{s+t} - \omega_{t} $. It is known that the set $\Omega^{\ast}_{0}$ is shift-invariant: we have $\{ \vartheta_{t} \in \Omega_{0}^{\ast} \} = \Omega_{0}^{\ast}$ for all $t\in\mathbb{R}$ (for the proof, see \cite{gess2011random}). Note that $B(\vartheta_{t}\omega) = \vartheta_{t}B(\omega)$ holds for all $t\in\mathbb{R}$ and $\omega\in\Omega^{\ast}$.

We set $\phi(t,\omega,x) = X^{x,\theta}_{t} (\omega)$, where $ X^{x,\theta}_{t}(\omega) $ denotes the solution of the stochastic differential equation
\[
X_{t}(\omega) = x + \int_{0}^{t}a(X_{s}(\omega),\theta)\,ds + \sigma B_{t}(\omega),\ t\geq 0
\]
on $(\Omega^{\ast},\mathcal{F}^{\ast}_{0},\mathbb{P}^{\ast}_{0})$.

In \cite{garrido2009discretization}, it is proved that 
\begin{itemize}
\item the pair $(\vartheta,\phi)$ defines a continuous RDS, and 
\item this RDS has a random attractor consists of a random element $\{ \bar{X}_{0}^{\theta}(\omega) \}$
\end{itemize}
assuming that the universe $\mathcal{D}$ consists of the \textit{tempered random sets} (see \cite{garrido2009discretization} for detail). 

We set 
\[
\bar{X}^{\theta}_{t}(\omega) = \bar{X}_{0}^{\theta}(\theta_{t}\omega)
\]
for all $t\in\mathbb{R}$. Since $(\mathbb{P}_{0}^{\ast})^{\vartheta_{t}} = \mathbb{P}_{0}^{\ast} $ holds for all $t\in\mathbb{R}$, the process $\bar{X}^{\theta}=(\bar{X}^{\theta}_{t})_{t\in\mathbb{R}}$ is stationary.

\begin{remark}[Some measurability issues]
\begin{asparaenum}
\item The map $\omega \mapsto \bar{X}^{\theta}_{0}(\omega)$ is $\mathcal{F}_{0}^{\ast}$-measurable. This is because $\bar{X}^{\theta}_{0}(\omega)$ can be written as
\[ \bar{X}^{\theta}_{0}(\omega) = \lim_{t\to\infty}\phi(t,\vartheta_{-t}\omega,0).
\]
Note that a (deterministic) one-point set $D=\{p\}$ is a tempered random set.
\item It is clear that $(t,\omega)\mapsto \bar{X}^{\theta}_{t}(\omega)$ is $\mathcal{B}(\mathbb{R})\otimes\mathcal{F}_{0}^{\ast}$-measurable (and hence $\mathcal{B}(\mathbb{R})\otimes\mathcal{F}^{\ast}$-measurable) by Definition \ref{definition1}.
\item The shift operator $\vartheta_{t}$ is also $(\mathcal{F}^{\ast};\mathcal{F}^{\ast})$-measurable for each $t\in\mathbb{R}$. Indeed, $A\in\mathcal{F}^{\ast}$ if and only if there are $A^{\prime},A^{\prime\prime}\in\mathcal{F}^{\ast}_{0}$ such that $A^{\prime} \subset A \subset A^{\prime\prime}$ with $ \mathbb{P}_{0}^{\ast}\{A^{\prime\prime}\setminus A^{\prime}\}=0$. The claim follows because $ \{\vartheta_{t}\in A^{\prime}\} \subset \{ \vartheta_{t} \in A \} \subset \{ \vartheta_{t} \in A^{\prime\prime} \} $ and $\mathbb{P}^{\ast}_{0} \{ \{\vartheta_{t} \in A^{\prime\prime} \} \setminus \{\vartheta_{t} \in A^{\prime}\} \} = \mathbb{P}^{\ast}_{0} \{ A^{\prime\prime}\setminus A^{\prime} \} =0 $. 
\item We can easily verify that $\mathbb{P}^{\ast}\{\vartheta_{t}\in A\} = \mathbb{P}^{\ast}\{A\}$ for all $A\in\mathcal{F}^{\ast}$. Hence $(\mathbb{P}^{\ast})^{\vartheta_{t}}=\mathbb{P}^{\ast}$ holds. In particular, the process $\bar{X}^{\theta}$ is again stationary under the probability measure $\mathbb{P}^{\ast}$.
\end{asparaenum}
\end{remark}

Let us check that the process $\bar{X}^{\theta}$ satisfies the equation (\ref{eq1}). The following proposition gives the proof of Theorem \ref{theorem1} (3).

\begin{proposition}
The stationary process $\bar{X}^{\theta} = (\bar{X}^{\theta}_{t})_{t\in\mathbb{R}}$ satisfies
\begin{align}\label{eq27}
\bar{X}^{\theta}_{t}(\omega) - \bar{X}^{\theta}_{s}(\omega) = \int_{s}^{t} a(\bar{X}^{\theta}_{r}(\omega),\theta)\, dr + \sigma( B_{t}(\omega) - B_{s}(\omega) )
\end{align}
for all $s<t$ and $\omega\in\Omega^{\ast}$.
\end{proposition}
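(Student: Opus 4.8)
The plan is to transfer the defining property of the random attractor—which is formulated in terms of the cocycle $\phi$ solving the SDE started at time $0$—into the pathwise integral identity (\ref{eq27}) for all pairs $s<t$ simultaneously. The key point is that $\bar X^\theta_t(\omega) = \bar X^\theta_0(\vartheta_t\omega)$ by definition, and that the shift $\vartheta_t$ interacts with the driving noise via $B(\vartheta_t\omega) = \vartheta_t B(\omega)$, i.e. $B_u(\vartheta_t\omega) = B_{u+t}(\omega) - B_t(\omega)$.

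First I would record the relation between the attractor value at different times and the cocycle. Since the attractor is $\phi$-invariant, $\phi(t-s,\vartheta_s\omega,\bar X^\theta_0(\vartheta_s\omega)) = \bar X^\theta_0(\vartheta_t\omega)$ for all $s<t$; equivalently, writing $Y_r := \phi(r-s,\vartheta_s\omega,\bar X^\theta_0(\vartheta_s\omega))$ for $r\geq s$, the process $Y$ starts at $\bar X^\theta_s(\omega)$ at time $s$, ends at $\bar X^\theta_t(\omega)$ at time $t$, and—by the very definition of $\phi$ as the solution map of the SDE, together with the noise-shift identity above—satisfies
\begin{align*}
Y_r = \bar X^\theta_s(\omega) + \int_s^r a(Y_v,\theta)\,dv + \sigma\bigl(B_r(\omega) - B_s(\omega)\bigr), \qquad r\in[s,\infty).
\end{align*}
The routine verification here is a change of variables $v\mapsto v-s$ in the integral defining $\phi(r-s,\vartheta_s\omega,\cdot)$, using $B_{r-s}(\vartheta_s\omega) = B_r(\omega) - B_s(\omega)$.

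Next I would invoke pathwise uniqueness for this integral equation on $[s,\infty)$, which holds under Assumption \ref{assumption1} (it is exactly the statement of the preceding proposition, with initial time $s$ and initial value the random variable $\xi = \bar X^\theta_s$). This forces $Y_r = \bar X^\theta_r(\omega)$ for every $r\geq s$: indeed $\bar X^\theta_\cdot(\omega)$ restricted to $[s,\infty)$ solves the same equation with the same initial datum, again by the invariance property of the attractor applied with base point $\vartheta_s\omega$ and the cocycle identity $\phi(r-s,\vartheta_s\omega,\cdot)\circ\phi(\text{earlier})$—or more directly, because $\bar X^\theta_r(\omega) = \bar X^\theta_0(\vartheta_r\omega) = \phi(r-s,\vartheta_s\omega,\bar X^\theta_0(\vartheta_s\omega)) = Y_r$ is already the content of $\phi$-invariance. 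Substituting $Y_r = \bar X^\theta_r(\omega)$ and evaluating the displayed identity at $r=t$ yields (\ref{eq27}).

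The only subtlety—and the step I expect to require the most care—is the handling of the exceptional sets: the attractor, its invariance, and the cocycle identities all hold on $\Omega^\ast_0$, which is shift-invariant, and $B_t$ was defined so as to agree with $\pi_t$ precisely on $\Omega^\ast_0$; off $\Omega^\ast_0$ the process $B$ vanishes identically and both sides of (\ref{eq27}) are zero (the stationary solution $\bar X^\theta$ is built from $\bar X^\theta_0$, which is $\mathcal F^\ast_0$-measurable and can be taken to vanish off $\Omega^\ast_0$), so the identity holds there trivially. Checking that the various ``for all $\omega$'' clauses in Definition \ref{definition1} and in the construction of $\bar X^\theta$ genuinely hold for every $\omega\in\Omega^\ast$ (not merely almost surely) is what makes the conclusion pathwise rather than a.s., and this is exactly why the probability space was set up with the shift-invariant full-measure set $\Omega^\ast_0$ in the first place.
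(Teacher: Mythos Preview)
Your proposal is correct and follows essentially the same approach as the paper: both use the $\phi$-invariance of the attractor together with the shift identity $B_u(\vartheta_s\omega)=B_{u+s}(\omega)-B_s(\omega)$ to identify $\bar X^\theta_\cdot$ with a solution of the SDE and read off (\ref{eq27}). The only organizational difference is that the paper first establishes the identity on $[0,t]$, then applies the shift $\vartheta_{-t}$ to obtain it on $[-t,0]$, and subtracts; you instead work directly on $[s,t]$ by shifting the base point to $\vartheta_s\omega$, which is a bit cleaner but amounts to the same thing.
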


\begin{proof}
By $\phi$-invariance of a random attractor, we have
\begin{align*}
\bar{X}^{\theta}_{t}(\omega) = \phi(t,\omega,\bar{X}_{0}^{\theta}(\omega)) = \bar{X}^{\theta}_{0}(\omega) + \int_{0}^{t} a(X_{s}^{\bar{X}^{\theta}_{0}(\omega),\theta}(\omega),\theta)\, ds + \sigma B_{t}(\omega)
\end{align*}
for all $t\geq 0 $ and $\omega \in \Omega^{\ast}$. Since the (pathwise) solution of the equation (\ref{eq1}) is unique, we have $ X_{t}^{\bar{X}^{\theta}_{0}(\omega),\theta}(\omega) = \bar{X}^{\theta}_{t}(\omega) $ for all $t\geq0 $ and $\omega\in\Omega^{\ast}$, i.e., we obtain 
\begin{align}\label{eq16}
\bar{X}^{\theta}_{t}(\omega) = \bar{X}^{\theta}_{0}(\omega) + \int_{0}^{t} a(\bar{X}^{\theta}_{s}(\omega),\theta)\, ds + \sigma (B_{t}(\omega) - B_{0}(\omega) )
\end{align}
for all $t\geq 0 $ and $\omega \in \Omega^{\ast}$.  
We can replace $\omega$ by $\vartheta_{-t}\omega$. Then we have
\begin{align}\label{eq17}
\bar{X}^{\theta}_{0}(\omega) = \bar{X}^{\theta}_{-t}(\omega) + \int_{-t}^{0}a(\bar{X}^{\theta}_{s}(\omega),\theta)\, ds + \sigma (B_{0}(\omega)-B_{-t}(\omega))
\end{align}
for all $t\geq0$ and $\omega\in\Omega^{\ast}$. Combining (\ref{eq16}) and (\ref{eq17}), we obtain (\ref{eq27}).
\end{proof}

Let us consider applying Malliavin calculus to the stationary solution $\bar{X}^{\theta}$. First we introduce some fractional operators (for detail, see \cite{samko1993fractional}).

\begin{definition}
Let $H$ be in $(0,1/2)$ and $\varphi \in C_{c}^{\infty}(\mathbb{R})$. 
\begin{enumerate}
\item We define a fractional integral of order $1/2-H$ of a function $\varphi$ by 
\[
(I^{1/2-H}_{\pm}\varphi)(x) = \frac{1}{\Gamma(1/2-H)} \int_{0}^{\infty} t^{-H-1/2} \varphi(x\mp t)\, dt
\]
for $x\in\mathbb{R}$. Note that, by Theorem 5.3 of \cite{samko1993fractional}, the operator $I_{-}^{1/2-H}$ can be extended to a bounded operator from $L^{2}(\mathbb{R})$ to $L^{1/H}(\mathbb{R})$.

\item Let $\psi$ be in $I^{1/2-H}_{-}(L^{2}(\mathbb{R}))$. We define a (Marchaud) fractional derivative of order $1/2-H$ of a function $\psi$ by
\[
(\mathbf{D}^{1/2-H}_{\pm}\psi)(x) = L^{2}\mathrm{-}\lim_{\epsilon\to 0}\frac{1/2-H}{\Gamma(H+1/2)}\int_{\epsilon}^{\infty} \frac{\psi(x)-\psi(x\mp t)}{t^{3/2-H}}\, dt
\]
for $x\in\mathbb{R}$. By Theorem 6.1 of \cite{samko1993fractional}, we have \[
\mathbf{D}^{1/2-H}_{-}I_{-}^{1/2-H}f = f
\]
for $f\in L^{2}(\mathbb{R})$.
\end{enumerate}
\end{definition}

Let $\mathcal{H}$ denote the space $I_{-}^{1/2-H}(L^{2}(\mathbb{R}))$ with the inner product 
\[
\langle f,g \rangle_{\mathcal{H}} = e_{H} \langle \mathbf{D}_{-}^{1/2-H}f, \mathbf{D}_{-}^{1/2-H}g \rangle_{L^{2}(\mathbb{R})},
\] 
where $e_{H}=\Gamma(2H+1)\sin(\pi H)$. It is shown in \cite{pipiras2000integration} that the space $\mathcal{H}$ is a Hilbert space. Then the process $B$ defines an isonormal Gaussian process over $\mathcal{H}$ (see also \cite{cheridito2005stochastic}).

Let $L^{2}(\Omega^{\ast};\mathcal{H})$ be the set of $\mathcal{H}$-valued random variables that are square integrable: if $h\in L^{2}(\Omega^{\ast};\mathcal{H})$, then $\mathbb{E}^{\ast}\{ \|h\|_{\mathcal{H}}^{2} \} < \infty$. The subset of $L^{2}(\Omega^{\ast};\mathcal{H})$, which consists of $\mathcal{H}$-valued random variables of the form 
\[
\phi = \sum_{i=1}^{n} Z_{i} \phi_{i},
\]
where $Z_{i}\in L^{2}(\mathbb{P^{\ast}})$ and $\phi_{i} \in I_{-}^{1/2-H}(C_{c}^{\infty}(\mathbb{R}))$ for $i=1,\ldots,n$, is denoted by $\mathcal{G}$. Here $C_{c}^{\infty}(\mathbb{R})$ denotes the set of smooth functions of compact support on $\mathbb{R}$. Note that the set $\mathcal{G}$ is dense in $L^{2}(\Omega^{\ast};\mathcal{H})$.

Now we turn to show the Malliavin differentiability of the stationary solution $\bar{X}^{\theta}$. The following lemma reduces the Malliavin differentiability of $\bar{X}^{\theta}_{t}$ to that of $\bar{X}^{\theta}_{0}$.

\begin{lemma} \label{lemma12}
We set $\tau_{s}\colon\mathcal{H}\to\mathcal{H}$ by $(\tau_{s}f)(t) = f(t-s)$ for $s\in\mathbb{R}$ and $f\in\mathcal{H}$. Suppose that $F$ is in $\mathbb{D}^{1,2}$. Then we have $F\circ\vartheta_{s}\in\mathbb{D}^{1,2}$ and
\begin{align}\label{eq37}
D(F\circ \vartheta_{s}) = \tau_{s}(DF)\circ\vartheta_{s}.
\end{align}

\end{lemma}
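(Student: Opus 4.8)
The plan is to reduce the statement to the case of smooth cylindrical functionals and then invoke the closability of the Malliavin derivative, after first recording two auxiliary facts: (i) $\tau_s$ is a unitary operator on $\mathcal{H}$ with inverse $\tau_{-s}$; and (ii) the isonormal Gaussian process generated by $B$ intertwines the two shifts, i.e.\ $B(h)\circ\vartheta_s = B(\tau_s h)$ in $L^2(\mathbb{P}^\ast)$ for every $h\in\mathcal{H}$.

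For (i) I would note that $I_{-}^{1/2-H}$ and $\mathbf{D}_{-}^{1/2-H}$ are translation invariant (they are of convolution type against a one-sided kernel), so that $\tau_s I_{-}^{1/2-H} = I_{-}^{1/2-H}\tau_s$ on $L^2(\mathbb{R})$ and $\tau_s\mathbf{D}_{-}^{1/2-H} = \mathbf{D}_{-}^{1/2-H}\tau_s$ on $\mathcal{H}$. In particular $\tau_s$ maps $\mathcal{H}=I_{-}^{1/2-H}(L^2(\mathbb{R}))$ onto itself with inverse $\tau_{-s}$, and since ordinary translation is an $L^2(\mathbb{R})$-isometry, the identity $\langle f,g\rangle_{\mathcal{H}} = e_H\langle\mathbf{D}_{-}^{1/2-H}f,\mathbf{D}_{-}^{1/2-H}g\rangle_{L^2(\mathbb{R})}$ gives $\|\tau_s f\|_{\mathcal{H}}=\|f\|_{\mathcal{H}}$. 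For (ii) I would first check the relation on the dense subspace spanned by the indicators $\mathbf{1}_{[0,t]}$, $t\in\mathbb{R}$ (which belong to $\mathcal{H}$ since $H<1/2$): from $B_t = B(\mathbf{1}_{[0,t]})$ and $B_t\circ\vartheta_s = B(\vartheta_s\,\cdot\,)_t = B_{t+s}-B_s$ together with $\mathbf{1}_{[s,s+t]}=\tau_s\mathbf{1}_{[0,t]}$ and linearity, one gets $B(h)\circ\vartheta_s = B(\tau_s h)$ for such $h$; the general case follows by density because both $h\mapsto B(h)\circ\vartheta_s$ and $h\mapsto B(\tau_s h)$ are isometries from $\mathcal{H}$ into $L^2(\mathbb{P}^\ast)$ — the former since $\vartheta_s$ preserves $\mathbb{P}^\ast$, the latter by (i).

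Granting (i) and (ii), let $F = f(B(h_1),\dots,B(h_n))$ be a smooth cylindrical functional, with $f$ and its derivatives of polynomial growth and $h_1,\dots,h_n\in\mathcal{H}$. Then $F\circ\vartheta_s = f(B(\tau_s h_1),\dots,B(\tau_s h_n))$ is again smooth cylindrical, hence in $\mathbb{D}^{1,2}$, and since $DF = \sum_{i=1}^{n}\partial_i f(B(h_1),\dots,B(h_n))\,h_i$ we obtain
\[
D(F\circ\vartheta_s) = \sum_{i=1}^{n}\partial_i f\bigl(B(\tau_s h_1),\dots,B(\tau_s h_n)\bigr)\,\tau_s h_i = \tau_s\bigl((DF)\circ\vartheta_s\bigr),
\]
using that $\tau_s$ acts only on the $\mathcal{H}$-component while $\vartheta_s$ acts only on $\omega$. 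This is \eqref{eq37} for such $F$.

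Finally, for a general $F\in\mathbb{D}^{1,2}$ take smooth cylindrical $F_n\to F$ in $\mathbb{D}^{1,2}$. Since $\vartheta_s$ preserves $\mathbb{P}^\ast$, $F_n\circ\vartheta_s\to F\circ\vartheta_s$ in $L^2(\mathbb{P}^\ast)$, while $D(F_n\circ\vartheta_s)=\tau_s((DF_n)\circ\vartheta_s)$ is Cauchy in $L^2(\Omega^\ast;\mathcal{H})$ because, for every $G\in\mathbb{D}^{1,2}$, the $\tau_s$-isometry on $\mathcal{H}$ and the $\mathbb{P}^\ast$-invariance of $\vartheta_s$ give $\|\tau_s((DG)\circ\vartheta_s)\|_{L^2(\Omega^\ast;\mathcal{H})}=\|DG\|_{L^2(\Omega^\ast;\mathcal{H})}$. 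By closability of $D$ it follows that $F\circ\vartheta_s\in\mathbb{D}^{1,2}$ and $D(F\circ\vartheta_s)=\lim_n\tau_s((DF_n)\circ\vartheta_s)=\tau_s((DF)\circ\vartheta_s)$, which is \eqref{eq37}. I expect the only genuinely delicate point to be steps (i)–(ii): checking that translation really commutes with the Marchaud fractional operators defining the norm on $\mathcal{H}$, and that the intertwining $B(h)\circ\vartheta_s = B(\tau_s h)$ extends correctly from the generating indicators to all of $\mathcal{H}$; once these are in place, the remainder is the standard smooth-functional-plus-closability scheme.
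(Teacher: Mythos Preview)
Your proof is correct and follows essentially the same approach as the paper's: both establish the identity first on smooth cylindrical functionals via the intertwining $B(h)\circ\vartheta_s = B(\tau_s h)$ (the paper phrases this as $D(B(\phi)\circ\vartheta_s)=\tau_s\phi$, starting from extended indicators $\tilde{\mathbf{1}}_{(a,b]}$ rather than $\mathbf{1}_{[0,t]}$), then pass to general $F\in\mathbb{D}^{1,2}$ by closability, using that $\tau_s$ commutes with $\mathbf{D}_-^{1/2-H}$ and is therefore an $\mathcal{H}$-isometry. Your explicit isolation of the unitarity of $\tau_s$ and the intertwining relation as auxiliary facts (i) and (ii) is a slightly cleaner packaging of the same argument.
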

\begin{proof}
Let $\tilde{\mathbf{1}}_{(a,b]}$ denote the extended indicator function:
\begin{align*}
\tilde{\mathbf{1}}_{(a,b]} =
\begin{cases}
\mathbf{1}_{(a,b]}, & \text{if }a\leq b \\ 
-\mathbf{1}_{(b,a]}, & \text{if }a > b.
\end{cases}
\end{align*}
Then we have 
\[
D( (B_{b}-B_{a})\circ\vartheta_{s}) = \tilde{\mathbf{1}}_{(a+s, b+s]} = \tau_{s}\tilde{\mathbf{1}}_{(a,b]} 
\]
for any real numbers $a$, $b$ and $s$. Therefore, by linearity of $\tau_{s}$ and $D$, we have $ D(B(\phi)\circ\vartheta_{s}) = \tau_{s}\phi $ for a real number $s$ and a step function $\phi$. Since the set of step functions is dense in $(\mathcal{H},\|\cdot\|_{\mathcal{H}})$ (see Theorem 3.3 of \cite{pipiras2000integration}), for each $\phi\in\mathcal{H} $ there exists a sequence of step functions $(\phi_{n})$ such that $\| \phi - \phi_{n} \|_{\mathcal{H}} \to 0$ as $n\to\infty$. It is clear that $\mathbb{E}^{\ast}\{ ( B(\phi_{n})\circ \vartheta_{s}  - B(\phi)\circ \vartheta_{s} )^{2} \} = \mathbb{E}^{\ast}\{ ( B(\phi_{n})  - B(\phi) )^{2} \} = \| \phi_{n} - \phi \|_{\mathcal{H}}^{2} \to 0 $ as $n\to\infty$. Since $\tau_{s}$ and $\mathbf{D}_{-}^{1/2-H}$ are commutative (see (5.61) in p.111 of \cite{samko1993fractional}) and the Lebesgue measure is translation invariant, we have $ \mathbb{E}^{\ast}\{ \| D(B(\phi_{n})\circ\vartheta_{s}) - D(B(\phi_{m})\circ\vartheta_{s}) \|_{\mathcal{H}}^{2} \} = \| \tau_{s}(\phi_{n}-\phi_{m}) \|_{\mathcal{H}}^{2} = \| \phi_{n}-\phi_{m} \|_{\mathcal{H}}^{2} \to 0 $ as $n,m\to\infty$. Hence we have $ D(B(\phi)\circ\vartheta_{s}) = \tau_{s}\phi $ for all $\phi\in\mathcal{H}$.

Let $\mathcal{S}$ denote the set of the random variables $F$ of the form \[F = f(B(\phi_{1}) , \ldots, B(\phi_{n})) \] for some positive integer $n$ where $\phi_{i}\in \mathcal{H}\ (i=1,\ldots,n)$ and $f$ is an infinitely continuously differentiable function such that all its partial derivatives are of polynomial growth. For $F\in\mathcal{S}$, we have
\begin{align*}
D(F\circ\vartheta_{s}) &= \sum_{i=1}^{m}(\partial_{i}f)(B(\phi^{1})\circ\vartheta_{s},\ldots,B(\phi^{m})\circ\vartheta_{s})\tau_{s}\phi^{i} \\
&= \tau_{s}(DF)\circ\vartheta_{s}
\end{align*}
and hence 
\begin{align}\label{eq28}
\mathbb{E}^{\ast}\{ \| D(F\circ\vartheta_{s}) \|^{2}_{\mathcal{H}} \} = \mathbb{E}^{\ast}\{ \| DF \|^{2}_{\mathcal{H}} \}.
\end{align}

For each $F\in\mathbb{D}^{1,2}$, we can choose $(F_{n})_{n}\subset \mathcal{S}$ such that $ \| F_{n} - F \|_{\mathbb{D}^{1,2}} \to 0 $ as $n\to\infty$. It is clear that $ \mathbb{E}^{\ast}\{ (F_{n}\circ\vartheta_{s} - F\circ\vartheta_{s} )^{2} \} \to 0 $ as $n\to\infty$. We also have $ \mathbb{E}^{\ast}\{ \| D(F_{n}\circ\vartheta_{s}) - D(F_{m}\circ\vartheta_{s}) \|_{\mathcal{H}}^{2} \} = \mathbb{E}^{\ast}\{ \| DF_{n} - DF_{m} \|_{\mathcal{H}}^{2} \}\to 0 $ as $n\to\infty$ by (\ref{eq28}). On the other hand, we have $\mathbb{E}^{\ast}\{ \| D(F_{n}\circ \vartheta_{s}) - \tau_{s}(DF)\circ\vartheta_{s} \|^{2}_{\mathcal{H}} \} = \mathbb{E}^{\ast}\{ \| DF_{n} - DF \|_{\mathcal{H}}^{2} \} \to 0$ as $n\to\infty$. Hence we have $F\circ\vartheta_{s} \in \mathbb{D}^{1,2}$ and (\ref{eq37}).
\end{proof}

The next proposition gives the proof of Theorem \ref{theorem1} (4).

\begin{proposition}\label{proposition3}
It holds that $\bar{X}^{\theta}_{t} \in \cap_{p>0}\mathbb{D}^{1,p}$ for all $t\in\mathbb{R}$ and its Malliavin derivative $D\bar{X}^{\theta}_{t}$ is given by (\ref{ineq2}).
\end{proposition}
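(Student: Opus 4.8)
The plan is to reduce to the value at time $0$ and then realise $\bar{X}^{\theta}_{0}$ as an $L^{p}$-limit of solutions of (\ref{eq1}) started from a fixed point in the far past. First, by Lemma \ref{lemma12} and the identity $\bar{X}^{\theta}_{t}=\bar{X}^{\theta}_{0}\circ\vartheta_{t}$, it suffices to prove $\bar{X}^{\theta}_{0}\in\bigcap_{p>0}\mathbb{D}^{1,p}$: once this is known, $D\bar{X}^{\theta}_{t}=\tau_{t}(D\bar{X}^{\theta}_{0})\circ\vartheta_{t}$, and inserting the candidate formula for $D\bar{X}^{\theta}_{0}$, using $\bar{X}^{\theta}_{r}\circ\vartheta_{t}=\bar{X}^{\theta}_{r+t}$ and the substitution $r\mapsto r+t$ inside the exponential, one recovers exactly the right-hand side of (\ref{ineq2}). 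So from now on I concentrate on $t=0$.

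Fix $p_{0}\in\mathbb{R}$ and, for $n\geq1$, let $X^{n}$ be the (pathwise) solution of (\ref{eq1}) on $[-n,\infty)$ with $X^{n}_{-n}=p_{0}$, evaluated at time $0$; by the pullback-attractor property recorded after Definition \ref{definition1}, $X^{n}\to\bar{X}^{\theta}_{0}$ pointwise on $\Omega^{\ast}$. For each fixed $n$ one has $X^{n}\in\bigcap_{p>0}\mathbb{D}^{1,p}$ with
\[
D_{u}X^{n}=\sigma\exp\Bigl(\int_{u}^{0}\partial_{x}a(X^{n}_{r},\theta)\,dr\Bigr)\mathbf{1}_{[-n,0]}(u),
\]
a standard fact for additive-noise equations: the substitution $Z=X-\sigma B$ turns (\ref{eq1}) into a random ODE depending smoothly, in the Fréchet sense, on the driving path, and the directional Malliavin derivative is governed by the linearised equation $\dot{Y}_{r}=\partial_{x}a(X^{n}_{r},\theta)Y_{r}$; the function on the right is of bounded variation with compact support, hence lies in $\mathcal{H}$. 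For the $L^{p}$-convergence of the values I use (A2): two solutions of (\ref{eq1}) driven by the same path satisfy $|X^{(1)}_{t}-X^{(2)}_{t}|\leq e^{-\alpha(t-s)}|X^{(1)}_{s}-X^{(2)}_{s}|$ (Gronwall applied to $Z^{(1)}-Z^{(2)}$, which is $C^{1}$ in $t$), so $|X^{n}-\bar{X}^{\theta}_{0}|\leq e^{-\alpha n}|p_{0}-\bar{X}^{\theta}_{-n}|$. Since $\bar{X}^{\theta}$ is stationary and $\mathbb{E}^{\ast}\{|X^{n}|^{q}\}=\mathbb{E}^{\ast}\{|X^{p_{0},\theta}_{n}|^{q}\}$ is bounded in $n$ by (\ref{ineq7}), Fatou's lemma gives $\mathbb{E}^{\ast}\{|\bar{X}^{\theta}_{0}|^{q}\}<\infty$ for all $q$, whence $\mathbb{E}^{\ast}\{|X^{n}-\bar{X}^{\theta}_{0}|^{p}\}\to0$ for every $p>0$.

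The main step is to show $DX^{n}\to g^{\infty}$ in $L^{p}(\Omega^{\ast};\mathcal{H})$ for every $p>0$, where $g^{\infty}_{u}=\sigma\exp(\int_{u}^{0}\partial_{x}a(\bar{X}^{\theta}_{r},\theta)\,dr)\mathbf{1}_{(-\infty,0]}(u)$ is the candidate for $D\bar{X}^{\theta}_{0}$. Pointwise convergence $D_{u}X^{n}\to g^{\infty}_{u}$ for fixed $u<0$ and a.e.\ $\omega$ is immediate from $X^{n}_{r}\to\bar{X}^{\theta}_{r}$ uniformly for $r$ in compacts together with the continuity of $\partial_{x}a(\cdot,\theta)$; the content is to upgrade this to convergence of the $\mathcal{H}$-norms. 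I would split $DX^{n}-g^{\infty}$ into the tail $-g^{\infty}\mathbf{1}_{(-\infty,-n]}$ and the bulk, supported on $[-n,0]$. Because $\partial_{x}a\leq-\alpha$, both exponents are $\leq\alpha u$ for $u\leq0$, so $|g^{\infty}_{u}|\leq|\sigma|e^{\alpha u}$ and the tail is exponentially small in $n$ in $\mathcal{H}$, by a deterministic estimate for the $\mathcal{H}$-norm of exponentially decaying functions of bounded variation. For the bulk, $|e^{A}-e^{B}|\leq|A-B|\,e^{A\vee B}$ with $e^{A\vee B}\leq e^{\alpha u}$ and $|A-B|\leq\int_{u}^{0}|\partial_{x}a(X^{n}_{r},\theta)-\partial_{x}a(\bar{X}^{\theta}_{r},\theta)|\,dr$, the latter controlled on $[u,0]$ by the contraction estimate and the modulus of continuity of $\partial_{x}a(\cdot,\theta)$; one then estimates the resulting piecewise-Lipschitz, exponentially weighted difference in $\mathcal{H}$ (note that $DX^{n}$ and $g^{\infty}$ agree at $0^{-}$, so the difference has no jump at $0$, and its jump at $-n$ has size $\leq|\sigma|e^{-\alpha n}$) and takes the $p$-th moment. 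Granting $X^{n}\to\bar{X}^{\theta}_{0}$ in $L^{p}(\Omega^{\ast})$ and $DX^{n}\to g^{\infty}$ in $L^{p}(\Omega^{\ast};\mathcal{H})$ for all $p$, the closedness of the Malliavin derivative yields $\bar{X}^{\theta}_{0}\in\mathbb{D}^{1,p}$ with $D\bar{X}^{\theta}_{0}=g^{\infty}$, which is (\ref{ineq2}) for $t=0$.

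The main obstacle is exactly this $\mathcal{H}$-norm estimate. For $H<1/2$ the inner product on $\mathcal{H}$ is not given by an absolutely convergent double integral against a nonnegative kernel and $\|\cdot\|_{\mathcal{H}}$ is not monotone in $|f|$, so neither $g^{\infty}\in\mathcal{H}$ nor the smallness of the tail and bulk terms can be read off from pointwise bounds alone; one must work through the representation $\|f\|_{\mathcal{H}}^{2}=e_{H}\|\mathbf{D}^{1/2-H}_{-}f\|_{L^{2}(\mathbb{R})}^{2}$ (or approximate by indicators of intervals), and it is here that the exponential decay of the Malliavin derivative coming from the dissipativity assumption (A2) is essential.
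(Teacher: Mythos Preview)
Your outline is correct and would yield the result, but it diverges from the paper's argument. The paper does \emph{not} try to prove $DX^{n}\to g^{\infty}$ in $L^{p}(\Omega^{\ast};\mathcal{H})$. It proceeds in three decoupled steps: (i) a uniform deterministic bound $\sup_{t\geq0}\|DX^{0,\theta}_{t}\|_{\mathcal{H}}\leq C$ (Lemma~\ref{lemma13}) plus weak compactness (Lemma~1.2.3 of \cite{nualart2006malliavin}) gives $\bar{X}^{\theta}_{0}\in\mathbb{D}^{1,2}$ without identifying the derivative; (ii) once $\bar{X}^{\theta}_{s}\in\mathbb{D}^{1,2}$ is known, Lemma~\ref{lemma14} with $\xi=\bar{X}^{\theta}_{s}$ gives $D^{\phi}\bar{X}^{\theta}_{t}$ in terms of $D^{\phi}\bar{X}^{\theta}_{s}$, and for $\phi\in I_{-}^{1/2-H}(C_{c}^{\infty}(\mathbb{R}))$ one lets $s\to-\infty$: the initial-condition term dies by dissipativity, and the inhomogeneous term stabilises because $\mathbf{D}^{1/2-H}_{+}\mathbf{D}^{1/2-H}_{-}\phi$ has \emph{compact support}, so no $\mathcal{H}$-norm estimate on a tail at $-\infty$ is ever needed; (iii) a deterministic pointwise bound~(\ref{ineq22}) on $\mathbf{D}^{1/2-H}_{-}\Psi(0,\cdot)$ then gives $\|D\bar{X}^{\theta}_{0}\|_{\mathcal{H}}\leq C$ a.s., hence $\bar{X}^{\theta}_{0}\in\bigcap_{p>0}\mathbb{D}^{1,p}$.

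Your route---strong convergence of $DX^{n}$ plus closedness---is conceptually cleaner and delivers all three conclusions at once. The price is exactly the obstacle you name at the end: you must control $\|DX^{n}-g^{\infty}\|_{\mathcal{H}}$, and since $\mathbf{D}^{1/2-H}_{-}$ is nonlocal, neither your tail nor your bulk estimate follows from pointwise bounds. For the tail $-g^{\infty}\mathbf{1}_{(-\infty,-n]}$, the jump at $-n$ produces a contribution of order $e^{-\alpha n}(\,\cdot+n)_{-}^{H-1/2}$ that has to be handled separately from the smooth part; for the bulk you need increment bounds on $D_{u}X^{n}-g^{\infty}_{u}$, not just sup bounds. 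All of this is doable---it amounts to redoing the computations of Lemma~\ref{lemma13} and of~(\ref{ineq22}) for the \emph{difference}---but it is at least as much work as the paper's argument, whereas the paper's use of compactly supported test functions in step~(ii) is precisely the device that lets it avoid estimating the tail in $\mathcal{H}$ altogether.
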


Since $\bar{X}_{0}$ is defined by the pathwise limit $\bar{X}^{\theta}_{0}(\omega)= \lim_{t\to\infty}(\bar{X}^{\theta}_{t}\circ\vartheta_{-t})(\omega)$, the Malliavin differentiablity of $\bar{X}_{0}^{\theta}$ is inherited from $X^{\theta}_{t}\circ\vartheta_{-t}$. Therefore, we begin with an analysis of the Malliavin derivative of $X^{\theta}_{t}$.

\begin{lemma}\label{lemma14}
Let $\xi\in\mathbb{D}^{1,2}\cap \left(\bigcap_{p>0} L^{p}(\mathbb{P}^{\ast})\right)$ and $Y^{\theta,\xi,s}$ be a solution of the equation
\[
Y_{t} = \xi + \int_{s}^{t} a(Y_{r},\theta)\,dr + \sigma(B_{t}-B_{s}),\ t\in[s,\infty).
\]
Then $Y^{\theta,\xi,s}_{t}$ is in $\mathbb{D}^{1,2}$ and 
\begin{align} \label{eq43}
DY^{\theta,\xi,s}_{t} = e^{\int_{s}^{t}(\partial_{x}a)(Y_{u}^{\theta,\xi,s},\theta)\,du} D\xi + \sigma\mathbf{1}_{(s,t]}e^{\int_{\cdot}^{t}(\partial_{x}a)(Y_{u}^{\theta,\xi,s},\theta)\, du}
\end{align}
for each $t\in[s,\infty)$.
\end{lemma}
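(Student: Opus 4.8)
The equation for $Y^{\theta,\xi,s}$ is an SDE with additive fractional noise whose solution is defined pathwise (by the Picard argument quoted earlier), so the natural strategy is a Picard iteration carried out \emph{simultaneously} for the solution and its Malliavin derivative. First I would set up the Picard scheme $Y^{(0)}_t = \xi$ and $Y^{(n+1)}_t = \xi + \int_s^t a(Y^{(n)}_r,\theta)\,dr + \sigma(B_t - B_s)$, which converges uniformly on compacts to $Y^{\theta,\xi,s}$ pathwise; I would simultaneously track the candidate derivatives. Since $a(\cdot,\theta)$ has bounded first $x$-derivative by \eqref{ineq23} and $\xi \in \mathbb{D}^{1,2}$, each $Y^{(n)}_t$ is in $\mathbb{D}^{1,2}$ by the chain rule for Malliavin derivatives together with the closability of $D$; one gets the recursion
\begin{align*}
D Y^{(n+1)}_t = e\text{-free:}\quad D Y^{(n+1)}_t = D\xi + \int_s^t (\partial_x a)(Y^{(n)}_r,\theta)\, D Y^{(n)}_r\, dr + \sigma \mathbf{1}_{(s,t]}.
\end{align*}

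**Passing to the limit.** Using the uniform bound $\sup_{t\ge s}\mathbb{E}^{\ast}\{|Y^{\theta,\xi,s}_t|^p\}<\infty$ (analogue of \eqref{ineq20}, obtained as in Proposition~2.2 of \cite{neuenkirch2014least}) together with the polynomial growth of the derivatives of $a$, I would show that $(Y^{(n)}_t, DY^{(n)}_t)$ is Cauchy in $\mathbb{D}^{1,2}$, uniformly in $t$ on compacts: the key inequality is
\[
\|Y^{(n+1)}_t - Y^{(n)}_t\|_{\mathbb{D}^{1,2}}^2 \lesssim \int_s^t \|Y^{(n)}_r - Y^{(n-1)}_r\|_{\mathbb{D}^{1,2}}^2\, dr,
\]
which iterates to a convergent series via the usual Grönwall/Picard estimate (here one exploits that $\partial_x a$ is \emph{bounded}, so the Malliavin-norm recursion is linear with bounded coefficient). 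Closability of $D$ then gives $Y^{\theta,\xi,s}_t \in \mathbb{D}^{1,2}$ and identifies $W_t := \lim_n D Y^{(n)}_t$ as $D Y^{\theta,\xi,s}_t$, with
\[
D Y^{\theta,\xi,s}_t = D\xi + \int_s^t (\partial_x a)(Y^{\theta,\xi,s}_r,\theta)\, D Y^{\theta,\xi,s}_r\, dr + \sigma \mathbf{1}_{(s,t]}.
\]

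**Solving the linear equation for the derivative.** This is a linear integral equation (in the time variable $t$, pointwise in the Hilbert-space variable of $\mathcal{H}$, hence componentwise in $r\in(-\infty,t]$) with bounded coefficient $t\mapsto(\partial_x a)(Y^{\theta,\xi,s}_t,\theta)$. Its unique solution is given by the variation-of-constants formula, which yields exactly \eqref{eq43}: the $D\xi$ part is propagated by the factor $e^{\int_s^t (\partial_x a)(Y^{\theta,\xi,s}_u,\theta)\,du}$, and the inhomogeneous term $\sigma\mathbf{1}_{(s,t]}$, being ``injected'' at each time $v\in(s,t]$, is propagated from $v$ to $t$ by $e^{\int_v^t(\partial_x a)(Y^{\theta,\xi,s}_u,\theta)\,du}$, producing the second term $\sigma\mathbf{1}_{(s,t]}e^{\int_\cdot^t (\partial_x a)(Y_u^{\theta,\xi,s},\theta)\,du}$. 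Uniqueness of the solution to this linear equation follows from Grönwall. Finally, $Y^{\theta,\xi,s}_t\in\mathbb{D}^{1,p}$ for all $p$ whenever $\xi\in\bigcap_p L^p$ and $\xi\in\mathbb{D}^{1,2}$ with $D\xi$ suitably integrable — more precisely one should reprove the Cauchy estimate in $\mathbb{D}^{1,p}$ norm, which goes through identically since $\partial_x a$ is bounded, so only the hypotheses on $\xi$ and the moment bound \eqref{ineq20} are needed.

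**Main obstacle.** The genuinely delicate point is not the formal computation but the \emph{functional-analytic control in the space} $\mathcal{H}$: the Malliavin derivative lives in $\mathcal{H}=I_-^{1/2-H}(L^2(\mathbb{R}))$, a space of distributions rather than functions, so one must check that the objects $\mathbf{1}_{(s,t]}$, $e^{\int_\cdot^t(\partial_x a)\,du}\mathbf{1}_{(s,t]}$, and $e^{\int_s^t(\partial_x a)\,du}D\xi$ genuinely belong to $\mathcal{H}$ (equivalently, lie in the range of $I_-^{1/2-H}$) and that the Picard iterates converge in the $\mathcal{H}$-norm, not merely pointwise. The bound $\|\mathbf{1}_{(s,t]}\|_{\mathcal{H}}<\infty$ and the $\mathcal{H}$-continuity of the exponential weighting factor (a Lipschitz multiplier with bounded logarithmic derivative, thanks to \eqref{ineq23}) are what make this work; this is where the structure of $\mathcal{H}$ via the operator $\mathbf{D}_-^{1/2-H}$ and the commutation/boundedness properties from \cite{samko1993fractional} are essential.
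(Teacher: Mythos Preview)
Your proposal is correct and follows essentially the same route as the paper: Picard iteration, Malliavin-differentiate each iterate, prove convergence in $\mathbb{D}^{1,2}$, then solve the resulting linear equation for $DY_t$ by variation of constants. The paper differs only in execution details---it first establishes a uniform-in-$n$ pathwise bound $\|DY^{(n)}_t\|_{\mathcal{H}}\le(\|D\xi\|_{\mathcal{H}}+|\sigma|(t-s)^H)e^{\alpha^{-1}(t-s)}$ before the Cauchy estimate (needed because the difference recursion has a cross term $[(\partial_x a)(Y^{(n)})-(\partial_x a)(Y^{(n-1)})]\,DY^{(n-1)}$, so it is not purely linear in the differences as your displayed inequality suggests), and it solves the linear equation by pairing against test functions $\phi\in I_-^{1/2-H}(C_c^\infty(\mathbb{R}))$ and invoking the fractional integration-by-parts identity $\langle\mathbf{1}_{(s,t]},\phi\rangle_{\mathcal{H}}=\int_s^t(\mathbf{D}_+^{1/2-H}\mathbf{D}_-^{1/2-H}\phi)_r\,dr$, which reduces the $\mathcal{H}$-valued ODE to a scalar one and thereby cleanly handles the $\mathcal{H}$-membership issue you flagged at the end.
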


\begin{proof}
Let us consider the Picard approximation $Y_{t}^{0}\equiv \xi$ for $t\in[s,\infty)$ and
\[
Y^{n}_{t} = \xi + \int_{s}^{t}a(Y^{n-1}_{r},\theta)\,dr + \sigma(B_{t}-B_{s}) 
\]
for a positive integer $n$. We have 
\begin{align*}
DY_{t}^{n}  = D\xi + \int_{s}^{t} (\partial_{x}a)(Y_{r}^{n-1},\theta) DY_{r}^{n-1}\, dr + \sigma\mathbf{1}_{(s,t]}
\end{align*}
and
\begin{align*} 
DY_{t}^{n+1} - DY_{t}^{n}  = \int_{s}^{t} D(a(Y_{r}^{n},\theta) - a(Y_{r}^{n-1},\theta))\,dr.
\end{align*}
We consider a continuous version of $\mathcal{H}$-valued process $(t,\omega)\mapsto DY^{n}_{t}(\omega)$. We can bound $ \|DY_{t}^{n}\|_{\mathcal{H}} $ independent of $n$. Indeed, we have
\begin{align*}
\|DY_{t}^{n}\|_{\mathcal{H}} &\leq \|D\xi\|_{\mathcal{H}} + |\sigma|(t-s)^{H} + |\alpha|^{-1} \int_{s}^{t} \|DY_{r_{1}}^{n-1}\|_{\mathcal{H}} \,dr_{1} \\
\nonumber &\leq \|D\xi\|_{\mathcal{H}} + |\sigma|(t-s)^{H} + |\alpha|^{-1} \int_{s}^{t}dr_{1}\ \left( \|D\xi\|_{\mathcal{H}} + |\sigma|(r_{1}-s)^{H} \right) \\ 
\nonumber &\quad +|\alpha|^{-2}\int_{s}^{t}dr_{1}\int_{s}^{r_{1}}dr_{2}\ \|DY_{r_{2}}^{n-2}\|_{\mathcal{H}} \\
\nonumber &\leq \left(\|D\xi\|_{\mathcal{H}} + |\sigma|(t-s)^{H} \right)( 1 + |\alpha|^{-1}(t-s) )\\
\nonumber &\quad +|\alpha|^{-2}\int_{s}^{t}dr_{1}\int_{s}^{r_{1}}dr_{2}\ \|DY_{r_{2}}^{n-2}\|_{\mathcal{H}}.
\end{align*}
By iterating this procedure, we obtain
\[
\|DY_{t}^{n}\|_{\mathcal{H}} \leq \left( \|D\xi\|_{\mathcal{H}} + |\sigma|(t-s)^{H} \right) e^{|\alpha|^{-1}(t-s)}.
\]

Next we bound $\|DY_{t}^{n+1} - DY_{t}^{n}\|_{\mathcal{H}}$. Since $\|DY_{t}^{n}\|_{\mathcal{H}}$ is bounded by a constant independent of $n$, we have
\begin{align*}
& \|D(a(Y_{r}^{n},\theta) - a(Y_{r}^{n-1},\theta))\|_{\mathcal{H}} \\
&= \Biggl\| \int_{0}^{1} (\partial_{x}\partial_{x}a)((1-\epsilon)Y_{r}^{n-1}+\epsilon Y_{r}^{n}, \theta) ((1-\epsilon)DY_{r}^{n-1} + \epsilon DY_{r}^{n} )\, d\epsilon \\
&\quad \times  (Y_{r}^{n}-Y_{r}^{n-1}) + \int_{0}^{1}(\partial_{x}a)((1-\epsilon)Y_{r}^{n-1} + \epsilon Y_{r}^{n} , \theta)\,d\epsilon (DY_{r}^{n}-DY_{r}^{n-1}) \Biggr\|_{\mathcal{H}} \\
& \leq |\alpha|^{-1} \left( \|D\xi\|_{\mathcal{H}} + |\sigma|(t-s)^{H} \right) e^{|\alpha|^{-1}(t-s)} |Y_{r}^{n}-Y_{r}^{n-1}|  \\
&\quad + |\alpha|^{-1} \|DY_{r}^{n} - DY_{r}^{n-1}\|_{\mathcal{H}}.
\end{align*}
Therefore, we obtain 
\begin{align*}
&\|DY_{t}^{n+1}-DY_{t}^{n}\|_{\mathcal{H}}\\
&\leq\int_{s}^{t} \|D(a(Y_{r}^{n},\theta) - a(Y_{r}^{n-1},\theta))\|_{\mathcal{H}}\,dr\\
&\leq |\alpha|^{-1} \left( \|D\xi\|_{\mathcal{H}} + |\sigma|(t-s)^{H} \right) e^{|\alpha|^{-1}(t-s)} \int_{s}^{t}dr_{1}\ |Y_{r_{1}}^{n}-Y_{r_{1}}^{n-1}| \\
&\quad + |\alpha|^{-1} \int_{s}^{t}dr_{1}\ \|DY_{r_{1}}^{n} - DY_{r_{1}}^{n-1}\|_{\mathcal{H}} \\
&\leq |\alpha|^{-1} \left( \|D\xi\|_{\mathcal{H}} + |\sigma|(t-s)^{H} \right) e^{|\alpha|^{-1}(t-s)} \\
&\quad \times\left( \int_{s}^{t}dr_{1}\ |Y_{r_{1}}^{n}-Y_{r_{1}}^{n-1}| + |\alpha|^{-1} \int_{s}^{t}dr_{1}\int_{s}^{r_{1}}dr_{2}\ |Y_{r_{2}}^{n-1}-Y_{r_{2}}^{n-2}| \right)\\
&\quad+ |\alpha|^{-2}\int_{s}^{t}dr_{1}\int_{s}^{r_{1}}dr_{2}\ \| DY_{r_{2}}^{n-1} - DY_{r_{2}}^{n-2} \|_{\mathcal{H}}.
\end{align*}
By iterating this estimate, we have
\begin{align*}
&\|DY_{t}^{n+1}-DY_{t}^{n}\|_{\mathcal{H}}\\
&\leq |\alpha|^{-1} \left( \|D\xi\|_{\mathcal{H}} + |\sigma|(t-s)^{H} \right) e^{|\alpha|^{-1}(t-s)} \\
&\quad \times \sum_{k=0}^{n-1} |\alpha|^{-k} \int_{s}^{t}dr_{1}\int_{s}^{r_{1}}dr_{2}\cdots\int_{s}^{r_{k}}dr_{k+1}\ |Y_{r_{k+1}}^{n-k}-Y_{r_{k+1}}^{n-k-1}| \\
&\quad + |\alpha|^{-n} \int_{s}^{t}dr_{1}\int_{s}^{r_{1}}dr_{2}\cdots\int_{s}^{r_{n-1}}dr_{n}\ \|DY_{r}^{1}-DY_{r}^{0}\|_{\mathcal{H}}\\
&=: \mathtt{I}_{1}^{n}(t) + \mathtt{I}_{2}^{n}(t).
\end{align*}
Since
\[
|Y_{t}^{n+1} - Y_{t}^{n}| \leq (|a(\xi,\theta)|(t-s) + |\sigma||B_{t}-B_{s}|)\frac{(|\alpha|^{-1}(t-s))^{n}}{n!}
\]
holds, the term $\mathtt{I}_{1}^{n}(t)$ is bounded as 
\begin{align*}
\mathtt{I}_{1}^{n}(t) &\leq 
|\alpha|^{-1} \left( \|D\xi\|_{\mathcal{H}} + |\sigma|(t-s)^{H} \right) e^{|\alpha|^{-1}(t-s)} \\
&\quad \times (|a(\xi,\theta)|(t-s) + |\sigma| \sup_{s\leq r\leq t} |B_{t}-B_{s}| ) \\
&\quad \times\sum_{k=0}^{n-1} |\alpha|^{-k} \int_{s}^{t}dr_{1}\int_{s}^{r_{1}}dr_{2}\cdots\int_{s}^{r_{k}}dr_{k+1}\ \frac{(|\alpha|^{-1}(r_{k+1}-s))^{n-k-1}}{(n-k-1)!} \\
&\leq |\alpha|^{-1}\left( \|D\xi\|_{\mathcal{H}} + |\sigma|(t-s)^{H} \right) e^{|\alpha|^{-1}(t-s)} \\
&\quad \times (|a(\xi,\theta)|(t-s) + |\sigma| \sup_{s\leq r\leq t} |B_{r}-B_{s}| ) (t-s) \frac{(|\alpha|^{-1}(t-s))^{n-1}}{(n-1)!}.
\end{align*}
The term $\mathtt{I}_{2}^{n}(t)$ can be bounded as 
\[
\mathtt{I}_{2}^{n}(t) \leq (|\alpha|^{-1}\|D\xi\|_{\mathcal{H}}(t-s) + |\sigma|(t-s)^{H}) \frac{(|\alpha|^{-1}(t-s))^{n}}{n!}.
\]

Therefore we have, for each $T>s$, 
\[
 \sup_{s\leq t \leq T} \| DY^{n}_{t} - DY^{m}_{t} \|_{\mathcal{H}} \to 0
\]
pointwisely and in $L^{2}(\mathbb{P}^{\ast})$ as $n,m\to\infty$. Hence $Y^{\theta,\xi,s}_{t} \in \mathbb{D}^{1,2}$. 

A continuous version of $(DY^{\theta,\xi,s}_{t})_{t\in[s,\infty)}$ satisfies 
\begin{align}\label{eq44}
D^{\phi}Y^{\theta,\xi,s}_{t} = D^{\phi}\xi + \int_{s}^{t} (\partial_{x}a)(Y^{\theta,\xi,s}_{r},\theta) D^{\phi}Y^{\theta,\xi,s}_{r} \,dr + \int_{s}^{t}b(r)\,dr
\end{align}
for all $t\in[s,\infty)$ and $\phi\in I_{-}^{1/2-H}(C_{c}^{\infty}(\mathbb{R}))$, where 
\[
b(r) = \sigma (\mathbf{D}^{1/2-H}_{+}\mathbf{D}^{1/2-H}_{-}\phi)_{r}.
\]
Here we used the integration by parts formula for fractional derivatives (see p.129 of \cite{samko1993fractional}):
\begin{align*}
 \sigma \langle \mathbf{1}_{(s,t]}, \phi \rangle_{\mathcal{H}} &= \int_{\mathbb{R}} (\mathbf{D}_{-}^{1/2-H}\mathbf{1}_{(s,t]})_{s} (\mathbf{D}_{-}^{1/2-H}\phi)_{s}\,ds \\
\nonumber &= \sigma\int_{s}^{t} (\mathbf{D}^{1/2-H}_{+}\mathbf{D}_{-}^{1/2-H}\phi)_{s} ds.
\end{align*}
Solving the equation (\ref{eq44}), we have
\begin{align*}
D^{\phi}Y_{t}^{\theta,\xi,s} = \langle e^{\int_{s}^{t}(\partial_{x}a)(Y_{u}^{\theta,\xi,s},\theta)\,du} D\xi + \sigma\mathbf{1}_{(s,t]}e^{\int_{\cdot}^{t}(\partial_{x}a)(Y_{u}^{\theta,\xi,s},\theta)\, du} , \phi \rangle_{\mathcal{H}}
\end{align*}
for all $\phi\in I_{-}^{1/2-H}(C_{c}^{\infty}(\mathbb{R}))$. Since $I_{-}^{1/2-H}(C_{c}^{\infty}(\mathbb{R}))$ is dense in $\mathcal{H}$, we obtain the identity (\ref{eq43}).
\end{proof}

\begin{lemma}\label{lemma13}
There is a positive constant $C>0$ such that, for all $t\geq0$,
\begin{align} 
\| DX_{t}^{0,\theta} \|_{\mathcal{H}}^{2} \leq C
\end{align}
holds $\mathbb{P}^{\ast}$-a.s.
\end{lemma}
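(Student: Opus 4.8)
The plan is to feed the explicit formula for the Malliavin derivative of $X_t^{0,\theta}$ given by Lemma \ref{lemma14} (applied with $\xi=0$, $s=0$) into the definition of the norm on $\mathcal{H}$. Writing $g_t$ for the function $r\mapsto D_rX_t^{0,\theta}$, Lemma \ref{lemma14} gives
\[
g_t(r)=\sigma\,\mathbf{1}_{(0,t]}(r)\,\exp\Bigl(\int_r^t(\partial_x a)(X_u^{0,\theta},\theta)\,du\Bigr),
\]
and in particular $g_t\in\mathcal{H}$. The only probabilistic input I need are two pointwise bounds that follow from Assumption \ref{assumption1}(A2) (together with \eqref{ineq23}), namely $(\partial_x a)(\cdot,\theta)\le-\alpha$ and $|(\partial_x a)(\cdot,\theta)|\le\alpha^{-1}$: the former yields $|g_t(r)|\le|\sigma|e^{-\alpha(t-r)}$ on $(0,t]$, and since $g_t$ is $C^1$ on $(0,t)$ with $g_t'(r)=-\sigma(\partial_x a)(X_r^{0,\theta},\theta)g_t(r)$, the two together yield $|g_t'(r)|\le|\sigma|\alpha^{-1}e^{-\alpha(t-r)}$. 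From here on the argument is deterministic.

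Since $\|g_t\|_{\mathcal{H}}^2=e_H\|\mathbf{D}_-^{1/2-H}g_t\|_{L^2(\mathbb{R})}^2$, it suffices to bound $\mathbf{D}_-^{1/2-H}g_t$ pointwise by a fixed $L^2(\mathbb{R})$ function independent of $t$. Using the Marchaud representation $\mathbf{D}_-^{1/2-H}g_t(x)=\frac{1/2-H}{\Gamma(H+1/2)}\int_0^\infty s^{-3/2+H}\bigl(g_t(x)-g_t(x+s)\bigr)\,ds$ (valid for our piecewise-$C^1$, compactly supported $g_t$), I would estimate region by region. For $x\ge t$ both $g_t(x)$ and $g_t(x+s)$ vanish, so $\mathbf{D}_-^{1/2-H}g_t(x)=0$. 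For $x\in(0,t)$, put $\rho=t-x$ and split the integral at $s=\rho$: on $(0,\rho)$ use $|g_t(x)-g_t(x+s)|\le\int_x^{x+s}|g_t'|\le|\sigma|\alpha^{-2}e^{-\alpha\rho}(e^{\alpha s}-1)$, and on $(\rho,\infty)$ use $g_t(x+s)=0$ with $\int_\rho^\infty s^{-3/2+H}\,ds=\rho^{H-1/2}/(1/2-H)$ and $|g_t(x)|\le|\sigma|e^{-\alpha\rho}$; estimating $\int_0^\rho s^{-3/2+H}(e^{\alpha s}-1)\,ds$ separately for $\rho\le1$ and $\rho\ge1$ gives $|\mathbf{D}_-^{1/2-H}g_t(x)|\le C\psi(t-x)$, where $\psi(\rho)=\rho^{H-1/2}\mathbf{1}_{\{0<\rho\le1\}}+(\rho^{H-3/2}+e^{-\alpha\rho/2})\mathbf{1}_{\{\rho>1\}}\in L^2(0,\infty)$ (integrable near $0$ since $2H-1>-1$, near $\infty$ since $2H-3<-1$). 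For $x<0$ one has $g_t(x)=0$ and, after the substitution $u=x+s$, $|\mathbf{D}_-^{1/2-H}g_t(x)|\le\frac{1/2-H}{\Gamma(H+1/2)}|\sigma|\int_0^t(u+|x|)^{-3/2+H}e^{-\alpha(t-u)}\,du$, which is at most $C|x|^{H-1/2}$ (take $u+|x|\ge|x|$ and bound the time integral by $\alpha^{-1}$... or rather bound $e^{-\alpha(t-u)}\le1$ and integrate in $u$) and also at most $C|x|^{H-3/2}$ (take $u+|x|\ge|x|$ and bound $\int_0^t e^{-\alpha(t-u)}\,du\le\alpha^{-1}$); calling the minimum $C\chi(x)$ gives $\chi\in L^2(-\infty,0)$ for the same reasons.

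Combining the three regions,
\[
\|DX_t^{0,\theta}\|_{\mathcal{H}}^2=e_H\int_{\mathbb{R}}|\mathbf{D}_-^{1/2-H}g_t(x)|^2\,dx\le e_HC^2\Bigl(\|\chi\|_{L^2(-\infty,0)}^2+\|\psi\|_{L^2(0,\infty)}^2\Bigr),
\]
and the right-hand side is a finite constant depending only on $H$, $\sigma$, $\theta$, $\alpha$ and not on $t\ge0$ or $\omega$, which is the assertion. The conceptual content is light; the only delicate point is the behaviour of $\mathbf{D}_-^{1/2-H}g_t$ near $x=0$ and near $x=t$, where $g_t$ has jumps of size $O(e^{-\alpha t})$ and $|\sigma|$ respectively, so that one must use the exponential factor $e^{-\alpha(t-x)}$ (respectively the polynomial gain from integrating $s^{-3/2+H}$ past $s=\rho$) to keep the bounds square-integrable uniformly in $t$. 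I expect this book-keeping, rather than any genuine difficulty, to be the bulk of the proof; note in particular that, consistently with the remark in Section \ref{section2.1}, no lower bound on $H$ is needed here beyond $H>0$.
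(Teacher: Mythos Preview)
Your proposal is correct and follows essentially the same route as the paper: both start from the explicit formula of Lemma~\ref{lemma14}, express the $\mathcal{H}$-norm through the Marchaud derivative, and estimate region by region ($x<0$, $0<x<t$, $x\ge t$) using the exponential decay from (A2) together with the Lipschitz bound on $g_t'$. The only difference is organizational---you bound $\mathbf{D}_-^{1/2-H}g_t$ pointwise by fixed $L^2$ functions and then integrate, whereas the paper bounds the squared integrals directly (using a H\"older trick on the $s<0$ piece and an auxiliary lemma, Lemma~\ref{lemma9}, for the estimate $e^{-\alpha\rho}\int_1^\rho\xi^{H-3/2}e^{\alpha\xi}\,d\xi\lesssim\rho^{H-3/2}$, which you obtain implicitly when handling $\rho>1$).
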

\begin{proof}
Thanks to Lemma \ref{lemma14}, we have $X_{t}^{0,\theta}\in\mathbb{D}^{1,2}$ and 
\[
 DX_{t}^{0,\theta} = \sigma\mathbf{1}_{(0,t]}(\cdot)e^{\int_{\cdot}^{t}(\partial_{x}a)(X_{u}^{0,\theta},\theta)\, du}.
 \]
We set $ \Phi(t,s) = \sigma\mathbf{1}_{(0,t]}(s)e^{\int_{s}^{t}\partial_{x}a(\bar{X}^{\theta}_{v},\theta)dv} $.
Therefore $ \|DX_{t}^{0,\theta}\|_{\mathcal{H}} = \| \Phi(t,\cdot) \|_{\mathcal{H}} $ holds $\mathbb{P}^{\ast}$-a.s., and so that it suffices to show that there is a positive constant $C>0$ such that $\| \Phi(t,\cdot) \|_{\mathcal{H}} \leq C$ holds for all $t\geq0$. 

First we consider the following decomposition:
\begin{align*}
\| \Phi(t,\cdot) \|_{\mathcal{H}}^{2} &= \int_{-\infty}^{0}ds\left| \int_{-s}^{-s+t}d\xi\,\xi^{H-3/2}\Phi(t,s+\xi) \right|^{2} \\
&\quad + \int_{0}^{t}ds\left| \int_{0}^{\infty}d\xi\,\xi^{H-3/2}(\Phi(t,s)-\Phi(t,s+\xi)) \right|^{2} \\
&=: \mathtt{I}_{1}(t) + \mathtt{I}_{2}(t).
\end{align*}
The term $\mathtt{I}_{1}(t)$ can be bounded from above as follows:
\begin{align*}
\mathtt{I}_{1}(t) &= |\sigma|^{2}\int_{-\infty}^{0}ds\left| \int_{0}^{t}d\xi\,(\xi-s)^{H-3/2}e^{\int_{\xi}^{t}\partial_{x}a(X_{v}^{\theta},\theta)dv} \right|^{2} \\
&\leq |\sigma|^{2}\int_{-\infty}^{0}ds\left| \int_{0}^{t}d\xi\,(\xi-s)^{H-3/2}e^{\alpha(t-\xi)} \right|^{2} \\
&= |\sigma|^{2}\int_{0}^{t}d\xi\int_{0}^{t}d\eta\,e^{-\alpha(t-\xi)}e^{-\alpha(t-\eta)} \int_{-\infty}^{0}ds\,(\xi-s)^{H-3/2}(\eta-s)^{H-3/2} \\
&\leq (2-2H)^{-1}|\sigma|^{2}\left| \int_{0}^{t}d\xi\,\xi^{H-1}e^{-\alpha(t-\xi)} \right|^{2}.
\end{align*}
Here the last inequality is due to H\"{o}lder's inequality. In order to obtain an upper bound for $\mathtt{I}_{2}(t)$, we further decompose $\mathtt{I}_{2}(t)$ as follows:
\begin{align*}
\mathtt{I}_{2}(t) &\leq 2 \int_{0}^{t}ds\left| \int_{0}^{-s+t}d\xi\,\xi^{H-3/2}(\Phi(t,s)-\Phi(t,s+\xi)) \right|^{2} \\
&\quad + 2\int_{0}^{t}ds\left|\int_{-s+t}^{\infty} d\xi\,\xi^{H-3/2}\Phi(t,s)\right|^{2} \\
&=: \mathtt{I}_{2,1}(t) + \mathtt{I}_{2,2}(t).
\end{align*}
For the term $\mathtt{I}_{2,1}$, we have
\begin{align*}
\nonumber \mathtt{I}_{2,1}(t) &\leq 4\int_{0}^{t}ds\left| \int_{s}^{(s+1)\wedge t}d\xi\,(\xi-s)^{H-3/2}(\Phi(t,s)-\Phi(t,\xi)) \right|^{2}  \\
\nonumber &\quad + 4\int_{0}^{t}ds\left| \int_{(s+1)\wedge t}^{t}d\xi\,(\xi-s)^{H-3/2}(\Phi(t,s)-\Phi(t,\xi)) \right|^{2} \\
\nonumber &\leq 4 |\sigma\alpha^{-1}|^{2} \int_{0}^{t}ds\left| \int_{s}^{(s+1)\wedge t}d\xi\,(\xi-s)^{H-1/2} e^{-\alpha(t-\xi)} \right|^{2}  \\
\nonumber &\quad + 4|\sigma|^{2}\int_{0}^{t}ds\left| \int_{(s+1)\wedge t}^{t}d\xi\,(\xi-s)^{H-3/2}e^{-\alpha(t-\xi)} \right|^{2} \\
\nonumber &= 4|\sigma\alpha^{-1}|^{2} \int_{0}^{(t-1)\vee0}ds\left| \int_{s}^{s+1}d\xi\,(\xi-s)^{H-1/2} e^{-\alpha(t-\xi)} \right|^{2} \\
\nonumber &\quad+ 4 |\sigma\alpha^{-1}|^{2} \int_{(t-1)\vee0}^{t}ds\left| \int_{s}^{t}d\xi\,(\xi-s)^{H-1/2} e^{-\alpha(t-\xi)} \right|^{2} \\
\nonumber &\quad + 4|\sigma|^{2} \int_{0}^{(t-1)\vee0}ds\left| \int_{(s+1)\wedge t}^{t} d\xi\,(\xi-s)^{H-3/2}e^{-\alpha(t-\xi)} \right|^{2}\\
&=: \mathtt{I}_{2,1,1}(t) + \mathtt{I}_{2,1,2}(t) + \mathtt{I}_{2,1,3}(t)
\end{align*}
If $t\in[0,1]$, then only the term $\mathtt{I}_{2,1,2}(t)$ appears and this is finite. If $t>1$, we have
\begin{align*}
\mathtt{I}_{2,1,1}(t) \leq 4(H+1/2)^{-2}|\sigma\alpha^{-1}|^{2} \int_{0}^{t-1}ds\ e^{-2\alpha((t-1)-s)},
\end{align*}
\begin{align*}
\mathtt{I}_{2,1,2}(t) &\leq 4 |\sigma\alpha^{-1}|^{2} \int_{t-1}^{t}ds\left| \int_{s}^{t}d\xi\ (\xi-s)^{H-1/2} \right|^{2} \\
&= |\sigma\alpha^{-1}|^{2}(H+1/2)^{-2}(2H+2)^{-1}
\end{align*}
and 
\begin{align*}
\mathtt{I}_{2,1,3}(t) &= 4|\sigma|^{2}\int_{0}^{t-1}ds\left| e^{-\alpha(t-s)} \int_{1}^{t-s}d\xi\ \xi^{H-3/2}e^{\alpha\xi} \right|^{2} \\
&\lesssim 4|\sigma|^{2} \int_{0}^{t-1}ds\ (t-s)^{2H-3}.
\end{align*}
Here we used Lemma \ref{lemma9} below. On the other hand, for the term $\mathtt{I}_{2,2}(t)$, we have 
\begin{align*}
\mathtt{I}_{2,2}(t) &= 2 (1/2-H)^{-2} \int_{0}^{t}ds\left| \Phi(t,s) (-s+t)^{H-1/2} \right|^{2} \\
&\leq 2|\sigma|^{2}(1/2-H)^{-2}\int_{0}^{t}ds\ e^{-2\alpha(t-s)}(t-s)^{2H-1} \\
&= 2|\sigma|^{2}(1/2-H)^{-2} \int_{0}^{t}ds\ s^{2H-1}e^{-2\alpha s}.
\end{align*}
All these bounds are finite even if we take the supremum over $t\geq0$.
\end{proof}

\begin{lemma}\label{lemma9}
Let $\alpha>0$ and $\beta>0$ be positive constants. For $x\geq1$, there exists a positve constant $C>0$ which depends only on $\alpha$ and $\beta$ such that
\begin{align}
e^{-\alpha x} \int_{1}^{x} d\xi\ \xi^{-\beta}e^{\alpha\xi} \leq C x^{-\beta}
\end{align}
holds.
\end{lemma}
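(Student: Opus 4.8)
The plan is to reduce the estimate to a standard Laplace-type bound by reversing the direction of the variable. Substituting $\xi = x-\eta$ turns the left-hand side into
\[
e^{-\alpha x}\int_1^x \xi^{-\beta} e^{\alpha\xi}\,d\xi = \int_0^{x-1} (x-\eta)^{-\beta} e^{-\alpha\eta}\,d\eta,
\]
so it suffices to bound this last integral by $Cx^{-\beta}$ with $C=C(\alpha,\beta)$. The integrand is largest near $\eta=0$, where $(x-\eta)^{-\beta}\approx x^{-\beta}$, which is exactly the claimed order; the point of the proof is to check that the tail in $\eta$ contributes nothing worse.

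First I would split the range of integration at $\eta = x/2$. On $[0, x/2]$ we have $x-\eta\geq x/2$, hence $(x-\eta)^{-\beta}\leq 2^{\beta} x^{-\beta}$, and therefore
\[
\int_{0}^{x/2}(x-\eta)^{-\beta}e^{-\alpha\eta}\,d\eta \leq 2^{\beta} x^{-\beta}\int_0^\infty e^{-\alpha\eta}\,d\eta = \frac{2^{\beta}}{\alpha}\,x^{-\beta}.
\]
This piece already accounts for the whole integral when $x\in[1,2]$, since then $x-1\leq x/2$. When $x\geq 2$ there remains the range $[x/2, x-1]$; there $x-\eta\geq 1$, so $(x-\eta)^{-\beta}\leq 1$, and $e^{-\alpha\eta}\leq e^{-\alpha x/2}$, while the interval has length at most $x/2$. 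Hence this second piece is bounded by $\frac{x}{2}e^{-\alpha x/2}$, and since the function $x\mapsto x^{1+\beta}e^{-\alpha x/2}$ is bounded on $[1,\infty)$ by a constant depending only on $\alpha$ and $\beta$, this is again $\leq C(\alpha,\beta)\,x^{-\beta}$. Adding the two bounds gives the claim.

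There is no real obstacle here; this is a routine Laplace-method estimate. The only points requiring a little care are to keep track that all constants depend only on $\alpha$ and $\beta$ (and not on $x$), and to handle the short-range case $x\in[1,2]$, i.e. to notice that the second piece of the split is empty unless $x\geq 2$. An alternative, essentially equivalent, route is a single integration by parts $\int_1^x \xi^{-\beta}e^{\alpha\xi}\,d\xi = \alpha^{-1}\big[\xi^{-\beta}e^{\alpha\xi}\big]_1^x + \frac{\beta}{\alpha}\int_1^x\xi^{-\beta-1}e^{\alpha\xi}\,d\xi$, which exhibits the leading term $\alpha^{-1}x^{-\beta}e^{\alpha x}$ directly; but then one still has to absorb the remainder integral, so the change-of-variables argument is cleaner.
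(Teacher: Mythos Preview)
Your proof is correct. The paper's own argument is close in spirit but organized slightly differently: it first integrates by parts to write
\[
e^{-\alpha x}\int_1^x \xi^{-\beta}e^{\alpha\xi}\,d\xi
= \alpha^{-1}x^{-\beta} - \alpha^{-1}e^{\alpha-\alpha x} + \alpha^{-1}\beta\, e^{-\alpha x}\int_1^x \xi^{-\beta-1}e^{\alpha\xi}\,d\xi
\]
for $x>2$, treats $x\in[1,2]$ trivially, and then controls the remainder integral by splitting it at $\xi=x/2$ (so the piece on $[1,x/2]$ contributes $O(e^{-\alpha x/2})$ and the piece on $[x/2,x]$ contributes $O(x^{-\beta})$). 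This is exactly the integration-by-parts route you mention at the end as an alternative. Your change-of-variables approach $\xi=x-\eta$ followed by a split at $\eta=x/2$ achieves the same decomposition more directly, without having to first extract the leading term and then absorb a remainder; the trade-off is that the paper's version makes the leading constant $\alpha^{-1}$ explicit, which is irrelevant here since only an upper bound is needed.
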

\begin{proof}
Let us set $ \mathtt{I}_{\alpha,\beta}(x) = e^{-\alpha x} \int_{1}^{x} d\xi\ \xi^{-\beta}e^{\alpha\xi} $. We have
\begin{align*}
\mathtt{I}_{\alpha,\beta}(x) &= e^{-\alpha x}\left( \alpha^{-1}x^{-\beta}e^{\alpha x}-\alpha^{-1}e^{\alpha} +\alpha^{-1}\beta\int_{1}^{x}d\xi\ \xi^{-\beta-1}e^{\alpha\xi} \right)\mathbf{1}_{(2,\infty)}(x)\\
\nonumber &\quad  + e^{-\alpha x}\int_{1}^{x}d\xi\ \xi^{-\beta}e^{\alpha\xi}\mathbf{1}_{[1,2]}(x) \\
&\leq e^{-\alpha x}\bigl( \alpha^{-1}x^{-\beta}e^{\alpha x} + \alpha^{-1}\beta\bigl( \beta^{-1}e^{(\alpha/2)x} + e^{\alpha x}(x/2)^{-\beta} \bigr) \bigr)\mathbf{1}_{(2,\infty)}(x) \\
&\quad + e^{\alpha}\mathbf{1}_{[1,2]}(x)
\end{align*}
This completes the proof.
\end{proof}

\begin{proof}[Proof of Proposition \ref{proposition3}]

First we show that $\bar{X}^{\theta}_{t}\in\mathbb{D}^{1,2}$. By Lemma \ref{lemma12}, it suffices to show that $\bar{X}^{\theta}_{0} \in \mathbb{D}^{1,2}$. Since a random attractor is pathwise pullback attracting, we have 
\[
 | X^{0,\theta}_{t}(\vartheta_{-t}\omega) - \bar{X}^{\theta}_{0}(\omega) | \to 0 
\]
as $n\to\infty$ for all $\omega\in\Omega^{\ast}$. Note that for $p>0$ it holds that $ \mathbb{E}^{\ast}\{ |X_{t}^{0,\theta}(\vartheta_{-t})|^{p} \} = \mathbb{E}^{\ast}\{ |X_{t}^{0,\theta}|^{p} \} \leq c_{p} $ with a positive constant $c_{p} >0$, which is independent of $t\geq 0$. Therefore the family $ (|X_{t}^{0,\theta}\circ\vartheta_{-t}|^{p})_{t\geq0} $ is uniformly integrable for all $p>0$ and in particular $L^{p}(\mathbb{P}^{\ast})$-convergence $ X^{0,\theta}_{t}\circ\vartheta_{-t} \to \bar{X}^{\theta}_{0} $ as $t\to\infty$ holds for all $p>0$. Therefore, in order to prove that $ \bar{X}^{\theta}_{0} \in \mathbb{D}^{1,2} $, it suffices to show
\[
\sup_{t\geq0} \mathbb{E}^{\ast}\{ \|D(X_{t}^{0,\theta}\circ \vartheta_{-t})\|_{\mathcal{H}}^{2} \} < \infty
\]
(see Lemma 1.2.3 of \cite{nualart2006malliavin}). By Lemma \ref{lemma12}, it suffices to prove $\sup_{t\geq0} \mathbb{E}^{\ast}\{ \|DX_{t}^{0,\theta}\|_{\mathcal{H}}^{2} \} < \infty$, and this inequality follows from Lemma \ref{lemma13}.

Next we show that $D\bar{X}^{\theta}_{t}$ is given by (\ref{ineq2}). Let $\phi \in \mathcal{G}$. By (\ref{eq43}), we have 
\begin{align}\label{eq32}
D^{\phi}\bar{X}^{\theta}_{t} = e^{\int_{s}^{t}(\partial_{x}a)(\bar{X}^{\theta}_{r},\theta)\,dr} D^{\phi}\bar{X}^{\theta}_{s} + \sigma \int_{s}^{t} e^{\int_{r}^{t}(\partial_{x}a)(\bar{X}^{\theta}_{u},\theta)\,du} (\mathbf{D}^{1/2-H}_{+}\mathbf{D}^{1/2-H}_{-}\phi)_{r}\,dr
\end{align}
$\mathbb{P}^{\ast}$-a.s., for each $s<0$. By Assumption \ref{assumption1} and Lemma \ref{lemma12}, the first term in (\ref{eq32}) converges to zero in $L^{2}(\mathbb{P}^{\ast})$ as $s\to -\infty$. Moreover, since 
\[
( \mathbf{D}^{1/2-H}_{+}\mathbf{D}^{1/2-H}_{-}\phi )_{r} = 0 
\]
for sufficiently small $r$, the second term in (\ref{eq32}) coincides with 
\[
 \sigma \int_{-\infty}^{t} e^{\int_{r}^{t}(\partial_{x}a)(\bar{X}^{\theta}_{u},\theta)\,du} (\mathbf{D}^{1/2-H}_{+}\mathbf{D}^{1/2-H}_{-}\phi)_{r}\,dr
\]
if $s$ is sufficiently small. Hence we obtain 
\begin{align}\label{eq33}
D^{\phi}\bar{X}^{\theta}_{t} = \sigma \int_{\mathbb{R}} \mathbf{D}^{1/2-H}_{-}(\mathbf{1}_{(-\infty,t]}(\cdot) e^{\int_{\cdot}^{t}(\partial_{x}a)(\bar{X}^{\theta}_{u},\theta)\,du} )_{r} (\mathbf{D}^{1/2-H}_{-}\phi)_{r}\,dr
\end{align}
$\mathbb{P}^{\ast}$-a.s. Let us denote the function $ r \mapsto \sigma \mathbf{1}_{(-\infty,t]}(r) e^{\int_{r}^{t}(\partial_{x}a)(\bar{X}^{\theta}_{u},\theta)\,du}$ by $ r \mapsto \Psi(t,r)$ for simplicity.
Taking the expectation of the both sides in $(\ref{eq33})$, we have $ \langle D\bar{X}^{\theta}_{t}, \phi \rangle_{L^{2}(\Omega;\mathcal{H})} = \langle \Psi(t,\cdot),\phi  \rangle_{L^{2}(\Omega;\mathcal{H})} $. Since the set $\mathcal{G}$ is dense in $L^{2}(\Omega;\mathcal{H})$, we obtain (\ref{ineq2}).

Finally we prove $ \mathbb{E}^{\ast}\{\| D\bar{X}^{\theta}_{t} \|^{p}_{\mathcal{H}}\} < \infty$ for all $p>0$. It suffices to show that there is a deterministic constant $C>0$ such that
\begin{align}\label{ineq21}
\|\Psi(0,\cdot)\|_{\mathcal{H}}^{2} \leq C
\end{align}
holds. A straightforward calculation yields 
\begin{align*}
(\mathbf{D}^{1/2-H}_{-} \Psi(0,\cdot))_{t} &= \biggl(\sigma\int_{0}^{-t}d\xi\ \xi^{H-3/2}\left(e^{\int_{t}^{0} (\partial_{x}a)(\bar{X}^{\theta}_{r},\theta)\,dr} - e^{\int_{t+\xi}^{0} (\partial_{x}a)(\bar{X}^{\theta}_{r},\theta)\,dr} \right) \\
&\quad + \sigma\int_{-t}^{\infty} d\xi\ \xi^{H-3/2}e^{\int_{t}^{0} (\partial_{x}a)(\bar{X}^{\theta}_{r},\theta)\,dr}\biggr) \mathbf{1}_{(-\infty,0]}(t)\\
&=:\mathtt{I}_{1}(t) + \mathtt{I}_{2}(t).
\end{align*}
By a simple calculation, we have 
\[
|\mathtt{I}_{1}(t)|\mathbf{1}_{[-1,0]}(t) \leq |\alpha^{-1}\sigma|(H+1/2)^{-1}(-t)^{H+1/2}\mathbf{1}_{[-1,0]}(t),
\]
\begin{align*}
&|\mathtt{I}_{1}(t)|\mathbf{1}_{(-\infty,-1)}(t) \\
&\leq \Biggl| \sigma \int_{0}^{1}d\xi\ \xi^{H-3/2}\left( e^{\int_{t}^{0} (\partial_{x}a)(\bar{X}^{\theta}_{r},\theta)\,dr} - e^{\int_{t+\xi}^{0} (\partial_{x}a)(\bar{X}^{\theta}_{r},\theta)\,dr} \right) \\
&\quad +\sigma\int_{1}^{-t}d\xi\ \xi^{H-3/2}\left( e^{\int_{t}^{0} (\partial_{x}a)(\bar{X}^{\theta}_{r},\theta)\,dr} - e^{\int_{t+\xi}^{0} (\partial_{x}a)(\bar{X}^{\theta}_{r},\theta)\,dr} \right) \Biggr|\mathbf{1}_{(-\infty,-1)}(t) \\
&\leq \left( |\alpha^{-1}\sigma|e^{\alpha t}\int_{0}^{1}d\xi\ \xi^{H-1/2}e^{\alpha\xi} + |\sigma|e^{\alpha t}\int_{1}^{-t}d\xi\ \xi^{H-3/2}e^{\alpha\xi} \right) \mathbf{1}_{(-\infty,-1)}(t)
\end{align*}
and
\[
|\mathtt{I}_{2}(t)| \leq |\sigma|(1/2-H)e^{\alpha t} (-t)^{H-1/2}\mathbf{1}_{(-\infty,0]}(t).
\]

After all, we obtain the inequality
\begin{align}\label{ineq22}
|(\mathbf{D}^{1/2-H}_{-} \Psi(0,\cdot))_{t}| \lesssim ((-t)^{H-1/2}\mathbf{1}_{[-1,0]}(t) + (-t)^{H-3/2}\mathbf{1}_{(-\infty,-1)}(t)).
\end{align}
This upper bound is in $L^{1}(\mathbb{R})\cap L^{2}(\mathbb{R})$.
\end{proof}

\section{Proof of the first part of Theorem \ref{theorem2}}

First we introduce some transformations related to fBM.

\begin{definition}
\begin{asparaenum}
\item[(1)] Let $ K_{H}^{\ast}\colon I_{T-}^{1/2-H}(L^{2}[0,T]) \to L^{2}[0,T] $ be a map defined by
\begin{align*}
(K_{H}^{\ast}h)_{s} &= d_{H}s^{1/2-H}D_{T-}^{1/2-H}(\cdot^{H-1/2}h_{\cdot})_{s}\\ 
&= \frac{d_{H}s^{1/2-H}}{\Gamma(1-(1/2-H))} \Biggl( \frac{s^{H-1/2}h_{s}}{(T-s)^{1/2-H}} \\
&\quad + (1/2-H)\int_{s}^{T}\frac{ s^{H-1/2}h_{s} -  r^{H-1/2}h_{r} }{ (r-s)^{3/2-H} }\, dr \Biggr).
\end{align*}
Note that the inverse $K_{H}^{\ast,-1} : L^{2}[0,T] \to I_{T-}^{1/2-H}(L^{2}[0,T])$ of $K_{H}^{\ast}$ is well-defined and given by 
\begin{align*}
(K_{H}^{\ast,-1}g)_{s} &= d_{H}^{-1}s^{1/2-H} I_{T-}^{1/2-H}( \cdot^{H-1/2} g_{\cdot} )_{s}\\
&= \bar{d}_{H}^{-1} s^{1/2-H}\int_{s}^{T}r^{H-1/2}(r-s)^{-1/2-H}g_{r}\,dr.
\end{align*}
For properties of the operators $K_{H}^{\ast}$ and $K_{H}^{\ast,-1}$, we refer to \cite{alos2001stochastic} and \cite{nualart2006malliavin}.
\item[(2)] We define a Volterra kernel $K_{H}(t,s)$ by
\[
K_{H}(t,s) = (K^{\ast}_{H}\mathbf{1}_{[0,t]})_{s}.
\] 
Here the symbol $\mathbf{1}_{A}(x)$ denotes an indicator function which is $1$ if $x\in A$ and $0$ otherwise. For explicit expressions of the kernel $K_{H}(t,s)$, see \cite{decreusefond1999stochastic}, \cite{alos2001stochastic} and \cite{nualart2006malliavin}.
\item[(3)] Let $K_{H}\colon L^{2}[0,T] \to I_{0+}^{1/2-H}(L^{2}[0,T])$ be such that 
\[
(K_{H}g)_{t} = \int_{0}^{T} K_{H}(t,s) g_{s}\, ds
\]
for $g \in L^{2}[0,T]$. Then $K_{H}$ defines an isomorphism between $L^{2}[0,T]$ and $I_{0+}^{1/2-H}(L^{2}[0,T])$. The operator $K_{H}$ can be expressed as 
\[
(K_{H}g)_{t} = d_{H} I_{0+}^{2H}(\cdot^{1/2-H}I_{0+}^{1/2-H}(\cdot^{H-1/2}g_{\cdot})_{\cdot})_{t},
\]
and hence its inverse $K_{H}^{-1} \colon I_{0+}^{1/2-H}(L^{2}[0,T]) \to L^{2}[0,T] $
\[
(K_{H}^{-1}h)_{t} = d_{H}^{-1} t^{1/2-H} D_{0+}^{1/2-H}(\cdot^{H-1/2}(D_{0+}^{2H}h_{\cdot})_{\cdot})_{t}
\]
for $h\in I_{0+}^{1/2-H}(L^{2}[0,T])$. For properties of the operators $K_{H}$ and $K_{H}^{-1}$, we refer to \cite{decreusefond1999stochastic} and \cite{nualart2002regularization}.

\end{asparaenum}
\end{definition}

\begin{remark}
If $h$ is absolutely continuous, then 
\begin{align}\label{eq25}
\nonumber (K_{H}^{-1}h)_{t} &= d_{H}^{-1}t^{H-1/2}I_{0+}^{1/2-H}\left(\cdot^{1/2-H}\dot{h}\right)_{t} \\
&= \bar{d}_{H}^{-1}t^{H-1/2}\int_{0}^{t}s^{1/2-H}(t-s)^{-1/2-H}\dot{h}_{s}\,ds.
\end{align}
holds (see \cite{nualart2002regularization}).
\end{remark}

To derive the likelihood ratio formula (\ref{eq9}), we follow the approach of \cite{tudor2007statistical} (see also \cite{nualart2002regularization}).

By the definition of $B^{\theta}$, we have 
\begin{align}\label{eq27}
\sigma^{-1}(\pi_{t}-\pi_{0}) = \int_{0}^{t} \sigma^{-1} a(\pi_{s},\theta)\,ds + B^{\theta}_{t}
\end{align}
for each $t\in[0,T]$. As in \cite{tudor2007statistical}, we define $(\mathcal{B}_{t})$-Brownian motion $W^{\theta}=(W^{\theta}_{t})$ by the Wiener integral
\begin{align}\label{eq26}
W_{t}^{\theta} = \int_{0}^{t} (K_{H}^{\ast,-1}\mathbf{1}_{[0,t]})_{s} \, dB_{s}^{\theta}.
\end{align}
Then we obtain
\begin{align}\label{eq5}
\sigma^{-1}(\pi_{t}-\pi_{0}) = \int_{0}^{t}K_{H}(t,s)\,dZ^{\theta}_{s},
\end{align}
where
\[
Z_{t}^{\theta} = W_{t}^{\theta} + \int_{0}^{t} K_{H}^{-1}\left( \int_{0}^{\cdot}\sigma^{-1}a(\pi_{r},\theta)\,dr\right)_{s}\,ds.
\]
Note that we have 
\[
K_{H}^{-1}\left( \int_{0}^{\cdot}\sigma^{-1}a(\pi_{r},\theta)\,dr\right)_{s} = \beta_{s}(\theta)
\]
by the equation (\ref{eq25}).

Hence the process $Z^{\theta}$ is a Brownian motion with drift under $\mu_{\theta}^{T}$. Now we apply the Girsanov theorem. We can check the Novikov condition is satisfied.

\begin{lemma}
Suppose that Assumption \ref{assumption1} holds. Then the Novikov condition holds:
\begin{align}
\mathbb{E}^{\mu_{\theta}^{(T)}} \left\{ \exp\left( \frac{1}{2}\int_{0}^{T} \beta_{t}^{2}(\theta)\,dt \right) \right\} < +\infty.
\end{align}
\end{lemma}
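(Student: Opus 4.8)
The plan is to use the description of $\mu_\theta^T$ from Theorem~\ref{theorem1} to turn the claim into an exponential-integrability estimate for an explicit functional of the SDE solution, and then to bound that functional pathwise.

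Under $\mu_\theta^T$ the canonical process $\pi$ has the law of the solution $X^{x_0,\theta}$ supplied by Theorem~\ref{theorem1}, so it is enough to show that $\mathbb{E}^{\ast}\{\exp(\tfrac12\int_0^T\beta_t(\theta)(X^{x_0,\theta})^2\,dt)\}<\infty$. By (A2) together with (\ref{ineq23}) the drift has linear growth, $|a(x,\theta)|\le|a(0,\theta)|+\alpha^{-1}|x|$. Applying the Cauchy--Schwarz inequality in the inner integral defining $\beta_t(\theta)$ and using the Beta identity $t^{H-1/2}\int_0^t(t-s)^{-1/2-H}s^{1/2-H}\,ds=B(3/2-H,1/2-H)\,t^{1/2-H}$ yields a pathwise bound of the form
\[
\int_0^T\beta_t(\theta)(X^{x_0,\theta})^2\,dt\ \lesssim\ T^{2-2H}\Bigl(1+\sup_{0\le s\le T}\bigl|X^{x_0,\theta}_s\bigr|^2\Bigr).
\]
To control the supremum I would use the dissipativity (A2): writing $Y_t:=X^{x_0,\theta}_t-\sigma B_t$, the path $Y$ is $C^1$ with $\dot Y_t=a(Y_t+\sigma B_t,\theta)$, and the one-sided dissipative Lipschitz bound gives $\tfrac{d}{dt}Y_t^2\le-\alpha Y_t^2+\alpha^{-1}a(\sigma B_t,\theta)^2$, whence $\sup_{t\le T}Y_t^2\lesssim 1+\sup_{s\le T}|B_s|^2$ and therefore $\sup_{t\le T}|X^{x_0,\theta}_t|^2\lesssim 1+\sup_{s\le T}|B_s|^2$. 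Together these give $\tfrac12\int_0^T\beta_t(\theta)(X^{x_0,\theta})^2\,dt\le D_T+E_T\sup_{s\le T}|B_s|^2$ with deterministic constants, $E_T$ of order $T^{2-2H}$.

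The remaining exponential-moment estimate is the main obstacle. A bare application of Fernique's theorem to the fractional Brownian motion $B$ gives finiteness only when $E_T$ is below the Fernique threshold of $\sup_{[0,T]}|B|^2$, which is of order $T^{-2H}$; this suffices for small $T$ but not for large $T$, so the $T$-dependence must be handled carefully. The point is that the $T^{2-2H}$-growth of $\int_0^T\beta_t(\theta)^2\,dt$ is carried entirely by a deterministic term: using the Beta identity once more one writes $\beta_t(\theta)=b_t+\tilde\beta_t(\theta)$ with $b_t:=\bar d_H^{-1}\sigma^{-1}B(3/2-H,1/2-H)\,\mathbb{E}^{\ast}\{a(\bar X^{\theta}_0,\theta)\}\,t^{1/2-H}$ non-random, so that $\exp(\tfrac12\int_0^T b_t^2\,dt)$ is just a constant, while the fluctuating part $\tilde\beta_t(\theta)$, built from $a(\pi_s,\theta)-\mathbb{E}^{\ast}\{a(\bar X^{\theta}_0,\theta)\}$, is bounded in $L^2(\mathbb{P}^{\ast})$ uniformly in $t$ --- here one uses the decay of $\mathrm{Cov}(a(\bar X^{\theta}_t,\theta),a(\bar X^{\theta}_0,\theta))$ (of order $t^{H-3/2}$, from the Malliavin-calculus analysis of the stationary solution in Section~3) and the exponential convergence of $X^{x_0,\theta}_t$ to $\bar X^{\theta}_t$. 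After splitting $(b_t+\tilde\beta_t)^2\le(1+\varepsilon^{-1})b_t^2+(1+\varepsilon)\tilde\beta_t^2$, what one really needs is exponential integrability of the quadratic functional $\int_0^T\tilde\beta_t(\theta)^2\,dt$ with a constant slightly above $\tfrac12$, uniformly in $T$; I would establish this by a Khasminski\u{\i}-type iteration over a partition of $[0,T]$ into windows of a common length $\delta$ depending only on $H,\sigma,\theta,\alpha$ --- on each such window the part of $\tilde\beta$ not measurable with respect to the past is, in absolute value, $\lesssim\delta^{1/2-H}(1+\sup_s|\pi_s|)$ (supremum over that window) and has a finite conditional exponential moment --- which is precisely the input needed for the Girsanov theorem used in Section~4.
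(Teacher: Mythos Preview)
The paper does not give a self-contained argument here: it simply refers to Proposition~1 of \cite{tudor2007statistical}. What is actually used downstream is not the full Novikov inequality on $[0,T]$ but only that the Girsanov exponential is a true martingale, and the standard route (and, to my reading, the one in \cite{tudor2007statistical}) is the \emph{localized} Novikov criterion: choose a partition $0=t_{0}<\cdots<t_{n}=T$ and verify $\mathbb{E}^{\mu_\theta^T}\bigl[\exp\bigl(\tfrac12\int_{t_{k-1}}^{t_{k}}\beta_t(\theta)^2\,dt\bigr)\bigr]<\infty$ for each $k$. Your own pathwise bound already gives $\int_{t_{k-1}}^{t_{k}}\beta_t(\theta)^2\,dt\le C\,(t_k-t_{k-1})\,T^{1-2H}\bigl(1+\sup_{[0,T]}|B_s|\bigr)^2$, and Fernique's theorem makes the right-hand side exponentially integrable as soon as $t_k-t_{k-1}$ is below a threshold of order $T^{-1}$. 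That is the whole argument; no splitting and no Khasminski\u{\i} iteration are needed.

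Your attempt at the \emph{global} Novikov inequality has a genuine gap. The decomposition $\beta_t=b_t+\tilde\beta_t$ removes only a deterministic constant from $a(\pi_s,\theta)$; pathwise the fluctuating part still satisfies $|\tilde\beta_t|\lesssim t^{1/2-H}(1+\sup_{s\le t}|X_s|)$, so subtracting $b_t$ does not improve the exponential-moment problem at all --- the uniform $L^2$-bound on $\tilde\beta_t$ that you invoke is far too weak for exponential integrability. The Khasminski\u{\i} step then fails for the reason you half-acknowledge: on a window $[(k-1)\delta,k\delta]$ one can indeed bound the ``new'' contribution $t^{H-1/2}\int_{(k-1)\delta}^{t}(t-s)^{-1/2-H}s^{1/2-H}(a(\pi_s,\theta)-m)\,ds$ by $C\delta^{1/2-H}(1+\sup_{s\in[(k-1)\delta,k\delta]}|\pi_s|)$, but the $\mathcal{F}_{(k-1)\delta}$-measurable part $t^{H-1/2}\int_{0}^{(k-1)\delta}(\cdots)\,ds$ is of size roughly $((k-1)\delta)^{1/2-H}$ times a random quantity that is \emph{not} bounded; hence $\int_{(k-1)\delta}^{k\delta}\tilde\beta_t^2\,dt$ contains an $\mathcal{F}_{(k-1)\delta}$-measurable term that is unbounded, and the conditional exponential moment is not uniformly controlled. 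In short, your iteration would at best reproduce the localized criterion, not the global one. I would recommend either reformulating the lemma as the localized Novikov condition (which is what Section~4 actually needs) or, if you insist on the statement as written, giving a direct argument specific to the Gaussian structure --- neither of which your proposal provides.
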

\begin{proof}
See the proof of Proposition 1 of \cite{tudor2007statistical}.
\end{proof}

Therefore the process $Z^{\theta}$ is a Brownian motion under the probability measure $\nu_{\theta}^{T}$ defined by $ d\nu_{\theta}^{(T)} = z_{T}^{\theta} d\mu_{\theta}^{(T)}$, where 
\[
z_{T}^{\theta} = \exp\left( -\int_{0}^{T} \beta_{t}(\theta)\ dW_{t}^{\theta} - \frac{1}{2}\int_{0}^{T} \beta_{t}^{2}(\theta)\,dt  \right).
\]
We can easily show that $ \mu_{\theta}^{T}\{ z_{T}^{\theta} = 0 \}=0 $, and therefore it holds that $ \mu_{\theta}^{T} \ll \nu_{\theta}^{T} $ (absolutely continous) and 
\begin{align}\label{eq4}
\frac{d\mu_{\theta}^{T}}{d\nu_{\theta}^{T}} = (z_{T}^{\theta})^{-1}.
\end{align}

The probability measure $ \nu_{\theta}^{T} $ actually is independent of $\theta\in\Theta$ by (\ref{eq5}). Therefore we have
\begin{align}\label{eq8}
\frac{d\mu_{\theta^{\prime}}^{T}}{d\mu_{\theta}^{T}} &= \exp\biggl( \int_{0}^{T} \beta_{t}(\theta^{\prime})\, dW_{t}^{\theta^{\prime}} - \int_{0}^{T} \beta_{t}(\theta) \, dW^{\theta}_{t} + \frac{1}{2} \int_{0}^{T}  (\beta_{t}(\theta^{\prime})^{2} - \beta_{t}(\theta)^{2})\,dt \biggr)
\end{align}
under the measure $\mu_{\theta}^{T}$. By (\ref{eq26}) and (\ref{eq27}), we have
\[
W^{\theta^{\prime}}_{t} =  W_{t}^{\theta} + \int_{0}^{t}\sigma^{-1}(a(\pi_{s},\theta)-a(\pi_{s},\theta^{\prime}))(K_{H}^{\ast,-1}\mathbf{1}_{[0,t]})_{s}\,ds.
\]
and
\begin{align*}
&\int_{0}^{t}\sigma^{-1}(a(\pi_{s},\theta)-a(\pi_{s},\theta^{\prime}))(K_{H}^{\ast,-1}\mathbf{1}_{[0,t]})_{s}\,ds \\
&= \bar{d}_{H}^{-1}\sigma^{-1}\int_{0}^{t}ds\,s^{1/2-H}\int_{s}^{t}dr\,(a(\pi_{s},\theta)-a(\pi_{s},\theta^{\prime}))r^{H-1/2}(r-s)^{-H-1/2}\\
&= \bar{d}_{H}^{-1}\sigma^{-1}\int_{0}^{t}dr\,r^{H-1/2}\int_{0}^{r}ds\,(a(\pi_{s},\theta)-a(\pi_{s},\theta^{\prime}))s^{1/2-H}(r-s)^{-H-1/2}\\
&= \int_{0}^{t}(\beta_{r}(\theta)-\beta_{r}(\theta^{\prime}))\,dr.
\end{align*}
Hence
\begin{align}\label{eq7}
W_{t}^{\theta^{\prime}} = \int_{0}^{t}(\beta_{s}(\theta) - \beta_{s}(\theta^{\prime}) )\,ds + W_{t}^{\theta}
\end{align}
holds.
Plugging (\ref{eq7}) into (\ref{eq8}), we obtain the formula (\ref{eq9}). This completes the proof.

\section{Local asymptotic structure of the likelihood ratio process}

The aim of this section is to prove the second part of Theorem \ref{theorem2}. Before proceeding to the proof, we recall the \textit{martingale central limit theorem}.

Let $(\Omega,\mathcal{F},\mathbb{P})$ be a complete probability space and $(\mathcal{F}_{t})_{t\in[0,T]}$ be a filtration. The class of $(\mathcal{F}_{t})$-progressively measurable process $h$ satisfying
\[
\mathbb{P}\left\{ \int_{0}^{T}h^{2}_{t}\,dt < \infty \right\} = 1
\]
is denoted by $\mathcal{M}_{T}$. We assume that there is a $d_{2}$-dimensional Brownian motion $ W = (W^{1},\ldots,W^{d_{2}}) $ on $(\Omega,\mathcal{F},\mathbb{P})$, and the random processes 
\[
 (h^{T,(i,j)}(\theta))_{i=1,\ldots,d_{1}, j=1,\ldots,d_{2}}
\]
are in $ \mathcal{M}_{T} $ for each $T>0$ and $\theta\in\Theta$. We define
\[
\mathcal{I}_{T}(\theta) = ( \mathcal{I}^{1}_{T}(\theta),\ldots, \mathcal{I}_{T}^{d_{1}}(\theta) )^{\star},
\]
where
\[
\mathcal{I}_{T}^{i}(\theta) = \sum_{j=1}^{d_{2}} \int_{0}^{T} h^{T,(i,j)}_{t}(\theta)\, dW_{t}^{j}.
\]
The next result is taken from \cite{kutoyants2004statistical}.

\begin{theorem}\label{theorem3}
Suppose that there exists a (nonrandom) positive definite matrix $I(\theta)=(I_{i,j}(\theta))_{i,j=1,\ldots,d_{1}}$ such that the following convergence takes place:
\begin{align*}
\sum_{l=1}^{d_{1}} \int_{0}^{T} h^{T,(i,l)}_{t}(\theta)h^{T,(j,l)}_{t}(\theta)\,dt \to^{p} I_{i,j}(\theta). 
\end{align*}
Then it holds that
\begin{align*}
\mathcal{I}_{T}(\theta) \to^{d} \mathcal{N}_{d_{1}}(0,I(\theta)).
\end{align*}
Here these limits are taken with respect to the measure $\mathbb{P}$. 
\end{theorem}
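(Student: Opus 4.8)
The plan is to reduce the multivariate convergence to a one-dimensional martingale central limit theorem by the Cram\'er--Wold device, and then to treat the scalar case by a time change.

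First I would note that, by the Cram\'er--Wold theorem, it suffices to prove $\lambda^{\star}\mathcal{I}_{T}(\theta)\to^{d}\mathcal{N}_{1}(0,\lambda^{\star}I(\theta)\lambda)$ (with respect to $\mathbb{P}$) for every fixed $\lambda=(\lambda_{1},\ldots,\lambda_{d_{1}})^{\star}\in\mathbb{R}^{d_{1}}$. Interchanging the two finite sums in the definition of $\mathcal{I}_{T}(\theta)$, one writes
\[
\lambda^{\star}\mathcal{I}_{T}(\theta) = \sum_{j=1}^{d_{2}}\int_{0}^{T} g^{T,j}_{t}(\theta)\,dW_{t}^{j},\qquad g^{T,j}_{t}(\theta):=\sum_{i=1}^{d_{1}}\lambda_{i}h^{T,(i,j)}_{t}(\theta),
\]
where each $g^{T,j}(\theta)$ is again in $\mathcal{M}_{T}$. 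Hence $M_{T}=(M_{T}(t))_{t\in[0,T]}$ given by $M_{T}(t)=\sum_{j=1}^{d_{2}}\int_{0}^{t}g^{T,j}_{s}(\theta)\,dW_{s}^{j}$ is a continuous local martingale with $M_{T}(0)=0$, $M_{T}(T)=\lambda^{\star}\mathcal{I}_{T}(\theta)$, and quadratic variation $\langle M_{T}\rangle_{t}=\sum_{j=1}^{d_{2}}\int_{0}^{t}(g^{T,j}_{s}(\theta))^{2}\,ds$. Expanding the square at $t=T$ and using the hypothesized convergence termwise yields
\[
\langle M_{T}\rangle_{T}=\sum_{i,i'=1}^{d_{1}}\lambda_{i}\lambda_{i'}\sum_{j=1}^{d_{2}}\int_{0}^{T}h^{T,(i,j)}_{t}(\theta)h^{T,(i',j)}_{t}(\theta)\,dt\;\to^{p}\;\sum_{i,i'=1}^{d_{1}}\lambda_{i}\lambda_{i'}I_{i,i'}(\theta)=\lambda^{\star}I(\theta)\lambda=:v,
\]
a deterministic constant (strictly positive when $\lambda\neq 0$, by positive definiteness of $I(\theta)$; the case $\lambda=0$ is trivial).

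The essential remaining step is the scalar statement: a family $(M_{T})$ of continuous local martingales with $M_{T}(0)=0$ whose quadratic variations at the terminal time converge in probability to a constant $v\ge 0$ satisfies $M_{T}(T)\to^{d}\mathcal{N}_{1}(0,v)$. I would prove this via the Dambis--Dubins--Schwarz time change: on a (possibly enlarged) space there is a standard Brownian motion $\beta^{(T)}$ with $M_{T}(t)=\beta^{(T)}(\langle M_{T}\rangle_{t})$, so $M_{T}(T)=\beta^{(T)}(\langle M_{T}\rangle_{T})$. Since the $\beta^{(T)}$ all carry Wiener law and $\langle M_{T}\rangle_{T}\to^{p}v$, a Slutsky-type estimate closes the gap: for fixed $K>v$ one has $\mathbb{P}\{\langle M_{T}\rangle_{T}>K\}\to 0$; on $[0,K]$ the paths of $\beta^{(T)}$ are uniformly continuous with a modulus of continuity whose law does not depend on $T$, whence $|\beta^{(T)}(\langle M_{T}\rangle_{T})-\beta^{(T)}(v)|\to^{p}0$; and $\beta^{(T)}(v)\sim\mathcal{N}_{1}(0,v)$ for every $T$. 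Testing against bounded uniformly continuous functions and letting $K\to\infty$ then gives the claim. (An equivalent route: stop $M_{T}$ the first time its quadratic variation exceeds $K$, which alters $M_{T}(T)$ only on an event of vanishing probability, so that $\exp(iuM_{T}(t)+\tfrac{u^{2}}{2}\langle M_{T}\rangle_{t})$ becomes a bounded, hence genuine, complex martingale of expectation $1$; a dominated-convergence argument exploiting $\langle M_{T}\rangle_{T}\to^{p}v$ converts this into $\mathbb{E}\{e^{iuM_{T}(T)}\}\to e^{-u^{2}v/2}$, and L\'evy's continuity theorem concludes.)

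The only genuinely delicate part is this last step: passing from local martingales with a priori unbounded quadratic variation to the Gaussian limit, which forces the localization/stopping bookkeeping above. Everything else is routine manipulation of finite sums. I would also remark that this is essentially a classical result, stated in \cite{kutoyants2004statistical}, and is recorded here only because it will be applied repeatedly in the remainder of this section.
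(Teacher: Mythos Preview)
Your proof is correct, and there is nothing to compare: the paper does not prove this theorem at all but simply quotes it from \cite{kutoyants2004statistical}. Your argument via Cram\'er--Wold followed by a Dambis--Dubins--Schwarz time change (or, equivalently, the exponential-martingale/characteristic-function route you sketch) is the standard way to establish this martingale central limit theorem, and the localization step you flag is indeed the only point requiring care. One small remark: in the displayed hypothesis the sum should run over $l=1,\ldots,d_{2}$ rather than $d_{1}$ (the second index of $h^{T,(i,l)}$ labels the components of the Brownian motion $W$); you have silently corrected this, which is the right reading.
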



\subsection{Proof of the second part of Theorem \ref{theorem2}}

For each $u\in\mathbb{R}^{m}$, the likelihood ratio process $ (d\mu^{T}_{\theta+\varphi_{T}(\theta)u} / d\mu^{T}_{\theta}) $ is denoted by $ Z_{\theta}^{T}(u) $. Let us define
\[
R_{t}^{T}(\theta,u) = u^{\star} \varphi_{T}(\theta) \int_{0}^{1}  ( \partial_{\theta}\beta_{t}(\theta+\epsilon \varphi_{T}(\theta)u) - \partial_{\theta}\beta_{t}(\theta) )\,d\epsilon,
\]
then we have 
\begin{align*}
\log Z_{\theta}^{T}(u) &= u^{\star}\varphi_{T}(\theta)\int_{0}^{T}\partial_{\theta}\beta_{t}(\theta)\ dW_{t}^{\theta} - \frac{1}{2}u^{\star} I(\theta) u \\
\nonumber& \quad - \frac{1}{2} u^{\star} \left(\varphi_{T}(\theta) \int_{0}^{T} \partial_{\theta}\beta_{t}(\theta)( \partial_{\theta}\beta_{t}(\theta) )^{\star}\,dt\ \varphi_{T}(\theta) - I(\theta)\right) u \\
\nonumber&\quad+ \int_{0}^{T}R_{t}^{T}(\theta,u)\, dW_{t}^{\theta} 
+ \int_{0}^{T} u^{\star}\varphi_{T}(\theta)\partial_{\theta}\beta_{t}(\theta)R_{t}^{T}(\theta,u)\,dt \\
\nonumber &\quad -\frac{1}{2} \int_{0}^{T} (R_{t}^{T}(\theta,u))^{2}\,dt.
\end{align*}
The first two terms are said to be the \textit{principal part}, and the last four terms the \textit{negligible part}.

First we identify the limit of the principal part. Let us rewrite the conditions of Theorem \ref{theorem3} in terms of our setting. We can choose the probability space $ (\Omega^{\ast},\mathcal{F}^{\ast},\mathbb{P}^{\ast}) $ from Theorem \ref{theorem1} as underlying probability space. We consider the filtration generated by the fBM $B$, which is denoted by $ (\mathcal{F}_{t}^{\ast})_{t\geq0} $. A Brownian motion $W$ in consideration is defined by $  W = W^{\theta}(X^{\theta}) $, and hence $ d_{2} = 1 $. The $ m $-dimensional process 
\[ 
(\partial_{\theta}\beta(\theta))(X^{\theta}) = ( (\partial_{\theta_{1}}\beta(\theta))(X^{\theta}), \ldots, (\partial_{\theta_{m}}\beta(\theta))(X^{\theta}) )^{\star} 
\]
 corresponds to the random processes $ (h^{T,(i,j)}(\theta))_{i=1,\ldots,d_{1}, j=1,\ldots,d_{2}} $ with $d_{1}=1$ and $d_{2}=m$. 






We set
\[
\kappa(i,\theta) = 
\begin{cases}
-1/2 & \text{if}\ i=1,\ldots,m_{0}(\theta) \\
-(1-H) & \text{if}\ i=m_{0}(\theta)+1,\ldots,m.
\end{cases}
\]

\begin{proposition}\label{lemma8}
Let $\theta \in \Theta$. It holds that for any $\epsilon>0$
\begin{align*}
 \mathbb{P}^{\ast} \left\{ \left| T^{\kappa(i,\theta) + \kappa(j,\theta)}\int_{0}^{T} (\partial_{\theta_{i}}\beta_{t}(\theta))(X^{\theta}) (\partial_{\theta_{j}}\beta_{t}(\theta))(X^{\theta})\,dt - I_{i,j}(\theta)  \right| > \epsilon \right\} \to 0
\end{align*}
as $T\to\infty$ for each $i,j=1\ldots,m$. In particular, We have
\begin{align*}
\varphi_{T}(\theta)\int_{0}^{T}\partial_{\theta}\beta_{t}(\theta)\, dW_{t}^{\theta} \to^{d} \mathcal{N}_{m}(0, I(\theta))
\end{align*}
as $T\to\infty$. Here the limit is taken with respect to $\mu_{\theta}^{T}$. Recall that the Fisher information matrix $I(\theta)=(I_{i,j}(\theta))_{i,j=1,\ldots,m}$ is defined in Theorem \ref{theorem2}.
\end{proposition}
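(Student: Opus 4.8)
The plan is the following. The distributional convergence in the second assertion follows at once from the first one together with the martingale central limit theorem (Theorem \ref{theorem3}), applied with $d_{1}=1$, $d_{2}=m$, $h^{T,(1,j)}_{t}=(\varphi_{T}(\theta)\,\partial_{\theta}\beta_{t}(\theta)(X^{\theta}))_{j}$ and $W=W^{\theta}(X^{\theta})$: since $\varphi_{T}(\theta)$ multiplies the $i$-th coordinate by $T^{\kappa(i,\theta)}$, the bracket hypothesis of Theorem \ref{theorem3} is exactly the componentwise convergence in probability that is asserted in the first part. So all the work is to prove that convergence, and the first step is to reduce to the stationary solution. Under Assumption \ref{assumption1}, and in particular the dissipativity (A2), one has the pathwise exponential contraction $|X^{x_{0},\theta}_{t}-\bar{X}^{\theta}_{t}|\lesssim e^{-\alpha t}|x_{0}-\bar{X}^{\theta}_{0}|$, while all moments of $X^{x_{0},\theta}_{t}$ and $\bar{X}^{\theta}_{t}$ are bounded uniformly in $t$ by Theorem \ref{theorem1}. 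Combining this with (A3)--(A4) (so that $x\mapsto\partial_{\theta}a(x,\theta)$ is Lipschitz with at most polynomial growth) and H\"older's inequality, a routine estimate shows that replacing $X^{x_{0},\theta}$ by $\bar{X}^{\theta}$ in $T^{\kappa(i,\theta)+\kappa(j,\theta)}\int_{0}^{T}(\partial_{\theta_{i}}\beta_{t}(\theta))(X^{\theta})(\partial_{\theta_{j}}\beta_{t}(\theta))(X^{\theta})\,dt$ changes it by a quantity converging to $0$ in $L^{1}(\mathbb{P}^{\ast})$; hence it suffices to prove $T^{\kappa(i,\theta)+\kappa(j,\theta)}\int_{0}^{T}(\partial_{\theta_{i}}\beta_{t})(\bar{X}^{\theta})(\partial_{\theta_{j}}\beta_{t})(\bar{X}^{\theta})\,dt\to I_{ij}(\theta)$ in $L^{1}(\mathbb{P}^{\ast})$.

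Next I would use the decomposition of Section \ref{section2.1}: writing out the Volterra kernel of $\beta$ and substituting $r\mapsto t-r$, one has $(\partial_{\theta_{i}}\beta_{t})(\bar{X}^{\theta})=\gamma^{1,i}_{t}+\gamma^{2,i}_{t}+\gamma^{3,i}_{t}$, where $\gamma^{1,i}_{t}=\sigma^{-1}\bar{d}_{H}^{-1}B(3/2-H,1/2-H)\mathbb{E}^{\ast}\{\partial_{\theta_{i}}a(\bar{X}^{\theta}_{0},\theta)\}\,t^{1/2-H}$ is deterministic and vanishes for $i\leq m_{0}(\theta)$, and $\gamma^{2,i}_{t},\gamma^{3,i}_{t}$ are integrals against $r^{-1/2-H}$ of the centered field $\partial_{\theta_{i}}a(\bar{X}^{\theta}_{t-r},\theta)-\mathbb{E}^{\ast}\{\partial_{\theta_{i}}a(\bar{X}^{\theta}_{0},\theta)\}$, the latter carrying the extra factor $(1-r/t)^{1/2-H}-1$ that vanishes as $r/t\to0$. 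By stationarity $\mathbb{E}^{\ast}[\gamma^{2,i}_{t}]=\mathbb{E}^{\ast}[\gamma^{3,i}_{t}]=0$, and using the covariance decay $|c_{ij}(t)|:=|\mathrm{Cov}(\partial_{\theta_{i}}a(\bar{X}^{\theta}_{t},\theta),\partial_{\theta_{j}}a(\bar{X}^{\theta}_{0},\theta))|\lesssim t^{H-3/2}$ for $t\geq1$ (Lemma \ref{corollary1}, which is proved from the explicit Malliavin derivative (\ref{ineq2}) of $\bar{X}^{\theta}$ together with an integration-by-parts formula) one gets $\sup_{t}\mathbb{E}^{\ast}[(\gamma^{2,i}_{t})^{2}]<\infty$ and $\mathbb{E}^{\ast}[(\gamma^{3,i}_{t})^{2}]\to0$ as $t\to\infty$ by dominated convergence.

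Then I would expand $T^{\kappa(i,\theta)+\kappa(j,\theta)}\int_{0}^{T}(\partial_{\theta_{i}}\beta_{t})(\bar{X}^{\theta})(\partial_{\theta_{j}}\beta_{t})(\bar{X}^{\theta})\,dt=\sum_{p,q}J^{T}_{p,q}$ with $J^{T}_{p,q}=T^{\kappa(i,\theta)+\kappa(j,\theta)}\int_{0}^{T}\gamma^{p,i}_{t}\gamma^{q,j}_{t}\,dt$, and treat three cases. If $i,j\leq m_{0}(\theta)$, only $p,q\in\{2,3\}$ occur and $\kappa(i,\theta)+\kappa(j,\theta)=-1$; here $\mathbb{E}^{\ast}[J^{T}_{2,2}]\to I_{ij}(\theta)$ by Fubini, stationarity and dominated convergence (the absolute convergence $\int_{0}^{\infty}\!\!\int_{0}^{\infty}r^{-1/2-H}u^{-1/2-H}|c_{ij}(|r-u|)|\,dr\,du<\infty$ being what Lemma \ref{corollary1} provides), while $\mathbb{E}^{\ast}|J^{T}_{2,2}-\mathbb{E}^{\ast}[J^{T}_{2,2}]|^{2}\leq\mathbb{E}^{\ast}\|DJ^{T}_{2,2}\|_{\mathcal{H}}^{2}\to0$ by the Poincar\'e inequality and Lemma \ref{lemma10}; the terms $J^{T}_{3,3},J^{T}_{2,3},J^{T}_{3,2}$ vanish in $L^{1}$ by dominated convergence and Cauchy--Schwarz in $t$, using the $L^{2}$-bounds above. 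If $i,j>m_{0}(\theta)$, then $\kappa(i,\theta)+\kappa(j,\theta)=2H-2$, a direct computation of the $t$-integral of $t^{1-2H}$ gives $J^{T}_{1,1}=I_{ij}(\theta)$ exactly, and every other term contains a factor $\gamma^{2,\cdot}$ or $\gamma^{3,\cdot}$, whose squared $L^{2}$-norm integrated over $[0,T]$ is $O(T)=o(T^{2-2H})$ resp. $o(T)$ because $H<1/2$, so Cauchy--Schwarz and Jensen kill it. Finally, if exactly one of $i,j$ exceeds $m_{0}(\theta)$, then $I_{ij}(\theta)=0$ and $\gamma^{1,\cdot}_{t}\equiv0$ for the small index, so each $J^{T}_{p,q}$ has $p\in\{2,3\}$; the terms with $q\in\{2,3\}$ vanish as before, and those involving the deterministic $\gamma^{1,j}_{t}$ have mean zero (since $\mathbb{E}^{\ast}[\gamma^{2,i}_{t}]=\mathbb{E}^{\ast}[\gamma^{3,i}_{t}]=0$), so one controls them through the second moment $\int_{0}^{T}\!\!\int_{0}^{T}t^{1/2-H}s^{1/2-H}\,\mathrm{Cov}(\gamma^{2,i}_{t},\gamma^{2,i}_{s})\,dt\,ds$; bounding $|\mathrm{Cov}(\gamma^{2,i}_{t},\gamma^{2,i}_{s})|\lesssim|t-s|^{-1/2-H}(t\wedge s)^{1/2-H}$ (again a consequence of Lemma \ref{corollary1}) this integral is $O(T^{3-4H})$, so $T^{2(\kappa(i,\theta)+\kappa(j,\theta))}$ times it is $O(T^{-2H})\to0$, and Chebyshev finishes the case.

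The hard part is the variance estimate in the first case, namely $\mathbb{E}^{\ast}\|DJ^{T}_{2,2}\|_{\mathcal{H}}^{2}\to0$ (Lemma \ref{lemma10}): this is exactly where the restriction $H>1/4$ is forced, and it rests on the covariance decay $|c_{ij}(t)|\lesssim t^{H-3/2}$ (Lemma \ref{corollary1}), which in the nonlinear setting cannot be read off a closed formula and must instead be extracted from the Malliavin derivative (\ref{ineq2}) of the stationary solution via an integration-by-parts identity — this is why the Malliavin calculus of $\bar{X}^{\theta}$ developed in Section 3 is indispensable. The other place where quantitative control of $c_{ij}$ is needed, rather than merely qualitative, is the second-moment estimate for the off-diagonal block in the last case.
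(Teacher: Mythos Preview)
Your proposal is correct and follows the paper's proof almost exactly: the reduction to the stationary solution via exponential contraction, the decomposition $\partial_{\theta_i}\beta_t(\theta)(\bar X^\theta)=\gamma^{1,i}_t+\gamma^{2,i}_t+\gamma^{3,i}_t$, the three-case analysis, and in case~(a) the combination of dominated convergence for the mean with the Poincar\'e inequality and Lemma~\ref{lemma10} for the variance, are all precisely what the paper does. (One trivial slip: in the application of Theorem~\ref{theorem3} the roles of $d_1$ and $d_2$ are reversed---the Brownian motion is one-dimensional and the integrand $m$-dimensional---but this is only notation.)

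The one genuine difference is in the off-diagonal case~(b). The paper handles $J^{(b),T}_{i,j}=T^{H-3/2}\int_0^T(\gamma^{2,i}_t+\gamma^{3,i}_t)\gamma^{1,j}_t\,dt$ exactly as in case~(a), via the Poincar\'e inequality and a direct estimate of $\mathbb E^\ast\|DJ^{(b),T}_{i,j}\|_{\mathcal H}^2$ (Lemma~\ref{lemma11}), which uses the decay of $\tilde c(t)=\mathbb E^\ast\{|\langle D\bar X^\theta_t,D\bar X^\theta_0\rangle_{\mathcal H}|^2\}^{1/2}$ rather than of $c_{ij}$. You instead bound the second moment directly through the covariance estimate $|\mathrm{Cov}(\gamma^{2,i}_t,\gamma^{2,i}_s)|\lesssim |t-s|^{-1/2-H}(t\wedge s)^{1/2-H}$. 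That bound is correct and does follow from the $c_{ij}$ decay (split the $r$-integral at $|t-s|/2$ and use that $\int_{\mathbb R}|c_{ii}|<\infty$ on the piece where $r^{-1/2-H}\lesssim|t-s|^{-1/2-H}$), but it is not an immediate corollary of Lemma~\ref{corollary1} as you suggest; a short separate argument is needed, together with the trivial bound $|\mathrm{Cov}|\lesssim 1$ near the diagonal. Both routes yield the same rate $O(T^{-2H})$. The paper's approach has the virtue of reusing the same machinery as case~(a) and avoiding this extra covariance computation; your approach is slightly more elementary in that it avoids invoking Poincar\'e a second time.
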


\begin{proof}
We set 
\begin{align}\label{eq20}
I_{i,j}^{T}(\theta)(x) = T^{\kappa(i,\theta) + \kappa(j,\theta)}\int_{0}^{T} (\partial_{\theta_{i}}\beta_{t}(\theta))(x) (\partial_{\theta_{j}}\beta_{t}(\theta))(x)\,dt
\end{align}
for $ x\in C[0,T] $.




\noindent \textbf{Step 1.}
We approximate $I_{i,j}^{T}(\theta)(X^{\theta})$ by $I_{i,j}^{T}(\theta)(\bar{X}^{\theta}|_{[0,T]})$. For simplicity, we omit the restriction $|_{[0,T]}$ in the following. Let $ \Delta_{t}^{(i)}(\theta) $ denote $ \partial_{\theta_{i}}\beta_{t}(\theta)(X^{\theta}) - \partial_{\theta_{i}}\beta_{t}(\theta)(\bar{X}^{\theta}) $. Then we have
\begin{align*}
I_{i,j}^{T}(\theta)(X^{\theta}) &= I_{i,j}^{T}(\theta)(\bar{X}^{\theta}) + T^{\kappa(i,\theta) + \kappa(j,\theta)}\int_{0}^{T} \Delta_{t}^{(i)}(\theta) (\partial_{\theta_{j}}\beta_{t}(\theta))(\bar{X}^{\theta}) dt \\ &\quad+ T^{\kappa(i,\theta) + \kappa(j,\theta)}\int_{0}^{T} \Delta_{t}^{(j)}(\theta) (\partial_{\theta_{i}}\beta_{t}(\theta))(\bar{X}^{\theta}) dt \\
&\quad+ T^{\kappa(i,\theta) + \kappa(j,\theta)}\int_{0}^{T} \Delta_{t}^{(i)}(\theta) \Delta_{t}^{(j)}(\theta) dt.
\end{align*}

%
For $t>0$, the quantity $ \Delta_{t}^{(i)}(\theta) $ can be written as
\begin{align*}
\Delta_{t}^{(i)}(\theta) &= \bar{d}_{H}^{-1} \sigma^{-1} t^{H-1/2} \int_{0}^{t}ds\int_{0}^{1}d\epsilon\ (t-s)^{-1/2-H}s^{1/2-H} \\
&\quad\times (\partial_{x}\partial_{\theta_{i}}a)( \bar{X}^{\theta}_{s} + \epsilon (X^{\theta}_{s} - \bar{X}^{\theta}_{s} ) ) (X^{\theta}_{s} - \bar{X}^{\theta}_{s} ).
\end{align*}
Since $|X^{\theta}_{s} - \bar{X}^{\theta}_{s}|\leq |X^{\theta}_{0} - \bar{X}^{\theta}_{0}|e^{-\alpha s}$ holds under Assumption \ref{assumption1} (for a proof, see \cite{garrido2009discretization} or \cite{neuenkirch2014least}), we have
\begin{align*}
|\Delta_{t}^{(i)}(\theta)| &\lesssim |X_{0}^{\theta}-\bar{X}_{0}^{\theta}| t^{H-1/2}\int_{0}^{t}ds\ (t-s)^{-1/2-H}s^{1/2-H}e^{-\alpha s} \\
&=  |X_{0}^{\theta}-\bar{X}_{0}^{\theta}| \int_{0}^{t}ds\ s^{-1/2-H}\left(1-\frac{s}{t}\right)^{1/2-H}e^{-\alpha(t-s)} \\
&\leq |X_{0}^{\theta}-\bar{X}_{0}^{\theta}| \int_{0}^{t}ds\ s^{-1/2-H}e^{-\alpha(t-s)}.
\end{align*}
In particular, we obtain 
\begin{align*}
\mathbb{E}^{\ast} \int_{0}^{T} \Delta_{t}^{(i)}(\theta)^{2} dt \leq C
\end{align*}
for some constant $C>0$ that is independent of $T>0$.

On the other hand, as we shall see in (\ref{limit6}) below, it holds that 
\[
\mathbb{E}^{\ast} \{I_{i,j}^{T}(\theta)(\bar{X}^{\theta})\} \to I_{i,j}(\theta)
\]
 as $T\to\infty$ for all $i,j=1,\ldots,m$. Hence it holds that
\begin{align}\label{limit2}
\mathbb{P}^{\ast}\{ | I_{i,j}^{T}(\theta)(X^{\theta}) - I_{i,j}^{T}(\theta)(\bar{X}^{\theta}) | \}\to 0
\end{align}
as $T\to\infty$ for all $i,j=1,\ldots,m$.

\noindent \textbf{Step 2.} 
Now we calculate the limit of $I_{i,j}^{T}(\theta)(\bar{X}^{\theta})$ when $T\to\infty$. In the following, we show that the limit
\begin{align}\label{limit6}
\lim_{T\to\infty} \mathbb{E}^{\ast} \left\{ \left| I_{i,j}^{T}(\theta)(\bar{X}^{\theta}) - I_{i,j}(\theta)  \right| \right\} = 0
\end{align}
holds.





First we decompose $ \partial_{\theta_{i}}\beta_{t}(\theta)(\bar{X}^{\theta}) $ into three terms:
\begin{align*}
&\partial_{\theta_{i}}\beta_{t}(\theta)(\bar{X}^{\theta}) \\
&= \sigma^{-1}\bar{d}_{H}^{-1} t^{H-1/2}\int_{0}^{t} (t-r)^{-H-1/2}r^{1/2-H} (\partial_{\theta_{i}} a)(\bar{X}_{r}^{\theta} , \theta)\, dr \\
\nonumber &= \sigma^{-1}\bar{d}_{H}^{-1}t^{H-1/2} \mathbb{E}^{\ast}\{ (\partial_{\theta_{i}} a)(\bar{X}^{\theta}_{0},\theta) \} \int_{0}^{t} (t-r)^{-H-1/2}r^{1/2-H}\, dr\\
\nonumber & \quad + \sigma^{-1}\bar{d}_{H}^{-1} t^{H-1/2}\int_{0}^{t} (t-r)^{-H-1/2}r^{1/2-H} \\
\nonumber&\quad\times \left( (\partial_{\theta_{i}} a)(\bar{X}_{r}^{\theta} , \theta) - \mathbb{E}^{\ast}\{ (\partial_{\theta_{i}} a)(\bar{X}^{\theta}_{0},\theta) \} \right) \, dr \\
\nonumber &= \sigma^{-1}\bar{d}_{H}^{-1}B(-H+1/2,-H+3/2)t^{1/2-H}\mathbb{E}^{\ast}\{ (\partial_{\theta_{i}} a)(\bar{X}^{\theta}_{0},\theta) \}  \\
\nonumber &\quad + \sigma^{-1}\bar{d}_{H}^{-1} \int_{0}^{t} r^{-1/2-H} \left( (\partial_{\theta_{i}} a)(\bar{X}_{t-r}^{\theta} , \theta) - \mathbb{E}^{\ast}\{ (\partial_{\theta_{i}} a)(\bar{X}^{\theta}_{0},\theta) \} \right) \, dr \\
\nonumber &\quad +\sigma^{-1}\bar{d}_{H}^{-1} \int_{0}^{t} r^{-1/2-H} \left\{ \left( 1- \frac{r}{t} \right)^{1/2-H} -1 \right\} \\
\nonumber& \quad\times \left( (\partial_{\theta_{i}} a)(\bar{X}_{t-r}^{\theta} , \theta) - \mathbb{E}^{\ast}\{ (\partial_{\theta_{i}} a)(\bar{X}^{\theta}_{0},\theta) \} \right) \, dr \\
\nonumber &=: \gamma_{t}^{1,i}(\theta) + \gamma_{t}^{2,i}(\theta) + \gamma_{t}^{3,i}(\theta).
\end{align*}

The next lemma plays a crucial role to prove (\ref{limit6}).

\begin{lemma} \label{corollary1}
We set 
\begin{align*}
&c_{i,j}(t) \\
\nonumber&= \mathbb{E}^{\ast}\{ ( \partial_{\theta_{i}}a(\bar{X}^{\theta}_{t},\theta) - \mathbb{E}^{\ast}\{ \partial_{\theta_{i}}a(\bar{X}^{\theta}_{0},\theta) \} )( \partial_{\theta_{j}}a(\bar{X}^{\theta}_{0},\theta) - \mathbb{E}^{\ast}\{ \partial_{\theta_{j}}a(\bar{X}^{\theta}_{0},\theta) \} ) \}.
\end{align*}
for $t\geq0$ and $i,j=1,\ldots,d$. 
\begin{enumerate}
\item We have
\begin{align}\label{ineq14}
\int_{0}^{\infty}dr\int_{0}^{\infty}du\ r^{-H-1/2}u^{-H-1/2}|c_{i,j}(|r-u|)| < \infty
\end{align}
for all $i,j=1,\ldots,m$.
\item It holds that
\begin{align}\label{eq22}
\begin{split}
&\lim_{T\to\infty}\mathbb{E}^{\ast}\left\{T^{-1} \int_{0}^{T}\gamma_{t}^{2,i}(\theta)\gamma_{t}^{2,j}(\theta)\,dt \right\} \\
&= \sigma^{-2}\bar{d}_{H}^{-2}\int_{0}^{\infty}dr\int_{0}^{\infty}du\ r^{-H-1/2}u^{-H-1/2}c_{i,j}(|r-u|)
\end{split}
\end{align}
for all $i,j=1,\ldots,m$.
\item We obtain
\begin{align}\label{eq23}
\lim_{T\to\infty}\mathbb{E}^{\ast}\left\{T^{-1} \int_{0}^{T}\gamma_{t}^{3,i}(\theta)^{2}\,dt \right\} = 0
\end{align}
for all $i=1,\ldots,m$.
\end{enumerate}
\end{lemma}

We postpone the proof of Lemma \ref{corollary1} until Section 4.2. In the following, we consider the following three cases:
\begin{itemize}
\item[(a)] $i=1,\ldots,m_{0}(\theta)$ and $j=1,\ldots,m_{0}(\theta)$,
\item[(b)] $i=1,\ldots,m_{0}(\theta)$ and $j=m_{0}(\theta)+1,\ldots,m$ (or $j=1,\ldots,m_{0}(\theta)$ and $i=m_{0}(\theta)+1,\ldots,m$), and
\item[(c)] $i=m_{0}(\theta)+1,\ldots,m$ and $j=m_{0}(\theta)+1,\ldots,m$.
\end{itemize}

\noindent \textbf{Case (a)}. Let us assume that $i=1,\ldots,m_{0}(\theta)$ and $j=1,\ldots,m_{0}(\theta)$. Note that $\mathbb{E}^{\ast}\{ (\partial_{\theta_{i}} a)(\bar{X}^{\theta}_{0},\theta) \}=0$ and $\kappa(i,\theta)=\kappa(j,\theta)=-1/2$ in this case, and so that $I_{i,j}^{T}(\theta)(\bar{X}^{\theta})$ becomes
\[
I_{i,j}^{T}(\theta)(\bar{X}^{\theta}) = T^{-1} \int_{0}^{T} (\gamma_{t}^{2,i}(\theta) + \gamma_{t}^{3,i}(\theta) ) (\gamma_{t}^{2,j}(\theta) + \gamma_{t}^{3,j}(\theta) )\,dt.
\]
Hence 
\begin{align}\label{ineq9}
\begin{split}
& \mathbb{E}\left\{ \left|I_{i,j}^{T}(\theta)(\bar{X}^{\theta}) - T^{-1} \int_{0}^{T}\gamma_{t}^{2,i}(\theta)\gamma_{t}^{2,j}(\theta)\,dt \right|\right\} \\
&\leq \sum_{\substack{(p,q)\in\{2,3\}^{2} \\ (p,q)\neq(2,2) }} \left(T^{-1} \mathbb{E}^{\ast}\int_{0}^{T} \gamma_{t}^{p,i}(\theta)^{2}\,dt\right)^{1/2} \left(T^{-1}\mathbb{E}^{\ast}\int_{0}^{T} \gamma_{t}^{q,j}(\theta)^{2}\,dt\right)^{1/2}.
\end{split}
\end{align}
Let us set $J_{i,j}^{(a),T}(\theta) = T^{-1} \int_{0}^{T}\gamma_{t}^{2,i}(\theta)\gamma_{t}^{2,j}(\theta)\,dt$. By Lemma \ref{corollary1}, the right hand side of (\ref{ineq9}) tends to $0$ as $T\to\infty$. Furthermore, we have
\begin{align}\label{ineq10}
\mathbb{E}\left\{ \left| J_{i,j}^{(a),T}(\theta) - \mathbb{E}^{\ast}\{J_{i,j}^{(a),T}(\theta)\} \right|^{2}\right\} \leq \mathbb{E}^{\ast}\{ \|DJ_{i,j}^{(a),T}(\theta)\|_{\mathcal{H}}^{2} \}
\end{align}
by the Poincar\'{e} inequality. The next lemma shows the law of large numbers for $J_{i,j}^{(a),T}(\theta)$.

\begin{lemma}\label{lemma10}
Suppose that $H\in(1/4,1/2)$ and $i,j=1,\ldots,m_{0}(\theta)$. Then we have
\begin{align}\label{limit4}
\mathbb{E}^{\ast}\{\| DJ_{i,j}^{(a),T}(\theta) \|_{\mathcal{H}}^{2}\} \to 0
\end{align}
as $n\to\infty$.
\end{lemma}

The proof of Lemma \ref{lemma10} is given in Section 4.2.
On the other hand, $\lim_{T\to\infty} \mathbb{E}^{\ast}J_{i,j}^{(a),T}(\theta) = I_{i,j}(\theta)$ by (\ref{eq22}). Therefore the limit (\ref{limit6}) holds in this case.

\noindent \textbf{Case (b)}. We assume $i=1,\ldots,m_{0}(\theta)$ and $j=m_{0}(\theta)+1,\ldots,m$. The case where $j=1,\ldots,m_{0}(\theta)$ and $i=m_{0}(\theta)+1,\ldots,m$ can be dealt with similarly. In this case, $\kappa(i,\theta)=-1/2$, $\kappa(j,\theta)=H-1$, and
\[
I_{i,j}^{T}(\theta)(\bar{X}^{\theta}) = T^{H-3/2} \int_{0}^{T} (\gamma_{t}^{2,i}(\theta) + \gamma_{t}^{3,i}(\theta) ) (\gamma_{t}^{1,j}(\theta) + \gamma_{t}^{2,j}(\theta) + \gamma_{t}^{3,j}(\theta) )\,dt.
\]
Therefore we obtain  
\begin{align}\label{ineq11}
\begin{split}
& \mathbb{E}\left\{ \left|I_{i,j}^{T}(\theta)(\bar{X}^{\theta}) - T^{H-3/2} \int_{0}^{T} (\gamma_{t}^{2,i}(\theta) + \gamma_{t}^{3,i}(\theta) )\gamma_{t}^{1,j}(\theta)\,dt \right|\right\} \\
&\leq T^{H-3/2} \sum_{(p,q)\in\{2,3\}^{2}} \left( \mathbb{E}^{\ast}\int_{0}^{T} \gamma_{t}^{p,i}(\theta)^{2}\,dt\right)^{1/2}  \left(\mathbb{E}^{\ast}\int_{0}^{T} \gamma_{t}^{q,j}(\theta)^{2}\,dt\right)^{1/2}.
\end{split}
\end{align}
Let us set $J_{i,j}^{(b),T}(\theta) = T^{H-3/2} \int_{0}^{T} (\gamma_{t}^{2,i}(\theta) + \gamma_{t}^{3,i}(\theta) )\gamma_{t}^{1,j}(\theta)\,dt $. As in the case (a), by Lemma \ref{corollary1}, we have 
\[
\mathbb{E}\left\{ \left|I_{i,j}^{T}(\theta)(\bar{X}^{\theta}) - J_{i,j}^{(b),T}(\theta) \right|\right\} \to 0
\]
as $T\to\infty$. The Poincar\'{e} inequality gives
\begin{align}\label{ineq12}
\mathbb{E}\{ |J_{i,j}^{(b),T}(\theta) - \mathbb{E}^{\ast}\{J_{i,j}^{(b),T}(\theta)\}|^{2} \} \leq \mathbb{E}^{\ast}\{ \| DJ_{i,j}^{(b),T}(\theta) \|^{2}_{\mathcal{H}} \}.
\end{align}

As in the case (a), the following lemma holds.

\begin{lemma}\label{lemma11}
Suppose that $i=1,\ldots, m_{0}$ and $j= m_{0}(\theta)+1,\ldots,m$. Then we have
\begin{align}\label{limit5}
\mathbb{E}^{\ast}\{\| DJ_{i,j}^{(b),T}(\theta) \|_{\mathcal{H}}^{2}\} \to 0
\end{align}
as $n\to\infty$.
\end{lemma}

The proof of Lemma \ref{lemma11} is given in Section 4.2. The right hand side of (\ref{ineq12}) tends to $0$ as $T\to\infty$ thanks to Lemma \ref{lemma11}. Since $\mathbb{E}^{\ast}\{J_{i,j}^{(b),T}(\theta)\}=0$, we obtain (\ref{limit6}).

\noindent \textbf{Case (c)}. Let us consider the case where $i=m_{0}(\theta)+1,\ldots,m$ and $j=m_{0}(\theta)+1,\ldots,m$. In this case, we have $\kappa(i,\theta)=\kappa(j,\theta)=H-1$ and 
\begin{align*}
&I_{i,j}^{T}(\theta)(\bar{X}^{\theta}) \\
&= T^{2H-2} \int_{0}^{T} (\gamma_{t}^{1,i}(\theta) + \gamma_{t}^{2,i}(\theta) + \gamma_{t}^{3,i}(\theta) ) (\gamma_{t}^{1,j}(\theta) + \gamma_{t}^{2,j}(\theta) + \gamma_{t}^{3,j}(\theta) )\,dt.
\end{align*}
Note that $I_{i,j}(\theta)=T^{2H-2} \int_{0}^{T} \gamma_{t}^{1,i}(\theta) \gamma_{t}^{1,j}(\theta)\,dt$. Therefore we can directly obtain 
\begin{align}\label{ineq13}
\begin{split}
&\mathbb{E}^{\ast}\{ |I_{i,j}^{T}(\theta)(\bar{X}^{\theta}) - I_{i,j}(\theta) | \} \\
&\leq T^{2H-2} \sum_{\substack{(p,q)\in\{ 1,2,3 \}^{2} \\ (p,q)\neq(1,1)}} \left( \mathbb{E}^{\ast}\int_{0}^{T} \gamma_{t}^{p,i}(\theta)^{2} \,dt \right)^{1/2} \left(  \mathbb{E}^{\ast}\int_{0}^{T} \gamma_{t}^{q,j}(\theta)^{2} \,dt \right)^{1/2}
\end{split}
\end{align}
in this case. Lemma \ref{corollary1} implies the right hand side of (\ref{ineq13}) converges to $0$ as $T\to\infty$. This completes the proof. 
\end{proof}

Next we show that the negligible part is indeed negligible. We start with the following estimation for $R_{t}^{T}(\theta,u)$.
\begin{lemma}
Let $K$ be a compact subset of $\mathbb{R}^{m}$. Then there exists a positive constant $C=C(H,\sigma,\theta,K) >0$ depending only on $\sigma,H,\theta$ and $K$ such that 
\[
\mathbb{E}^{\ast}\left\{ \sup_{u\in K\cap(\varphi_{T}(\theta)^{-1}(\Theta-\theta))}|R_{t}^{T}(\theta,u)(X^{\theta})|^{2} \right\} \leq C T^{-2} t^{1-2H}.
\]
\end{lemma}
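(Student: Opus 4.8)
The plan is to dominate $|R_{t}^{T}(\theta,u)(X^{\theta})|$ pathwise by $T^{-1}$ times a convolution of $(1+|X_{s}^{\theta}|^{p})$ against the kernel $(t-s)^{-1/2-H}s^{1/2-H}$, and then pass to the $L^{2}(\mathbb{P}^{\ast})$-norm by means of the uniform moment bound (\ref{ineq7}).

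\textbf{Step 1 (pathwise bound).} Differentiating the defining formula of $\beta_{t}$ gives
\[
\partial_{\theta}\beta_{t}(\theta')(X^{\theta}) - \partial_{\theta}\beta_{t}(\theta)(X^{\theta}) = \bar{d}_{H}^{-1}\sigma^{-1}t^{H-1/2}\int_{0}^{t}(t-s)^{-1/2-H}s^{1/2-H}\bigl(\partial_{\theta}a(X_{s}^{\theta},\theta')-\partial_{\theta}a(X_{s}^{\theta},\theta)\bigr)\,ds.
\]
I would take $\theta'=\theta+\epsilon\varphi_{T}(\theta)u$ and invoke the Lipschitz estimate in (A3). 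Since $\varphi_{T}(\theta)$ is block-diagonal with scalar blocks $T^{-1/2}$ and $T^{-(1-H)}$, one has $\|\varphi_{T}(\theta)\|_{\mathrm{op}}=T^{-1/2}$ for $T\ge1$ (as $1-H>1/2$), hence $|\theta'-\theta|\le T^{-1/2}|u|$; moreover, for $T$ large enough (depending on $K$) the segment $\{\theta+\epsilon\varphi_{T}(\theta)u:\epsilon\in[0,1]\}$ lies in a small ball around $\theta$ contained in $\Theta$, for every $u\in K$, so the expression above is well defined. Combining this with the outer factor $|u^{\star}\varphi_{T}(\theta)|\le T^{-1/2}|u|$, I obtain, uniformly over $u\in K$,
\[
\sup_{u}|R_{t}^{T}(\theta,u)(X^{\theta})| \le C_{K}\,T^{-1}\,t^{H-1/2}\int_{0}^{t}(t-s)^{-1/2-H}s^{1/2-H}\bigl(1+|X_{s}^{\theta}|^{p}\bigr)\,ds,
\]
with $C_{K}$ depending only on $H,\sigma,\theta,\alpha$ and $\sup_{u\in K}|u|$. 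The supremum over $u$ is measurable because $u\mapsto R_{t}^{T}(\theta,u)(X^{\theta})$ is continuous (by (A3) and dominated convergence), so it may be computed over a countable dense subset of $K$.

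\textbf{Step 2 ($L^{2}$-norm and the Beta integral).} Taking $L^{2}(\mathbb{P}^{\ast})$-norms and applying Minkowski's integral inequality,
\[
\Bigl(\mathbb{E}^{\ast}\Bigl\{\bigl(\textstyle\int_{0}^{t}(t-s)^{-1/2-H}s^{1/2-H}(1+|X_{s}^{\theta}|^{p})\,ds\bigr)^{2}\Bigr\}\Bigr)^{1/2}\le \int_{0}^{t}(t-s)^{-1/2-H}s^{1/2-H}\,\mathbb{E}^{\ast}\{(1+|X_{s}^{\theta}|^{p})^{2}\}^{1/2}\,ds.
\]
By (\ref{ineq7}), $\sup_{s\ge0}\mathbb{E}^{\ast}\{(1+|X_{s}^{\theta}|^{p})^{2}\}<\infty$, so the right-hand side is $\lesssim \int_{0}^{t}(t-s)^{-1/2-H}s^{1/2-H}\,ds = B(3/2-H,1/2-H)\,t^{1-2H}\lesssim t^{1-2H}$. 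Squaring this, and multiplying by the deterministic factor $C_{K}^{2}T^{-2}t^{2H-1}$ from Step 1, gives
\[
\mathbb{E}^{\ast}\Bigl\{\sup_{u}|R_{t}^{T}(\theta,u)(X^{\theta})|^{2}\Bigr\}\le C\,T^{-2}\,t^{2H-1}\,t^{2-4H}=C\,T^{-2}\,t^{1-2H},
\]
which is the claimed inequality.

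The argument is essentially routine bookkeeping. The only mildly delicate points are verifying that the perturbed parameters $\theta+\epsilon\varphi_{T}(\theta)u$ stay in $\Theta$ (which forces $T$ to be large and the constant to depend on $K$) and justifying the exchange of $\mathbb{E}^{\ast}$ and the $ds$-integral, both handled above; I do not expect any genuine obstacle.
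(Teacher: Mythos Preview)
Your argument is correct and follows essentially the same route as the paper: a pathwise bound via the Lipschitz condition (A3) together with $\|\varphi_{T}(\theta)\|\le T^{-1/2}$, then the moment bound (\ref{ineq7}) to control the $L^{2}$-norm, the only cosmetic difference being that the paper expands the square of the integral as a double $ds\,dv$-integral whereas you invoke Minkowski's inequality. You are in fact slightly more careful than the paper in justifying measurability of the supremum and in noting that $\theta+\epsilon\varphi_{T}(\theta)u\in\Theta$ for large $T$.
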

\begin{proof}
Since 
\begin{align*}
&| \partial_{\theta}\beta_{t}(\theta+\epsilon \varphi_{T}(\theta) u)(x) - \partial_{\theta}\beta_{t}(\theta)(x) | \\
&\leq \bar{d}_{H}^{-1}|\sigma|^{-1} t^{H-1/2}\int_{0}^{t}ds\ (t-s)^{-1/2-H}s^{1/2-H} \\
&\quad \times | \partial_{\theta}a(x_{s},\theta+\epsilon \varphi_{T}(\theta) u) - \partial_{\theta}a(x_{s},\theta) | \\
&\lesssim\epsilon T^{-1/2} |u| \int_{0}^{t}ds\ (t-s)^{-1/2-H}s^{1/2-H}(1+|x_{s}|^{p}),
\end{align*}
holds by Assumption \ref{assumption1}, we have
\begin{align*}
& |R_{t}^{T}(\theta,u)| \\
&\leq T^{-1/2}|u| \int_{0}^{1}d\epsilon\ | \partial_{\theta}\beta_{t}(\theta+\epsilon \varphi_{T}(\theta)u) - \partial_{\theta}\beta_{t}(\theta) | \\
&\lesssim T^{-1}|u|^{2} t^{H-1/2} \int_{0}^{t}ds\ (t-s)^{-1/2-H}s^{1/2-H}(1+|X^{\theta}_{s}|^{p}).
\end{align*}
Therefore, we obtain 
\begin{align*}
& \mathbb{E}^{\ast}\left\{ \sup_{u\in K\cap(\varphi_{T}(\theta)^{-1}(\Theta-\theta))}|R_{t}^{T}(\theta,u)(X^{\theta})|^{2} \right\} \\
&\leq C(H,\sigma,\theta,K) T^{-2} t^{2H-1} \int_{0}^{t}ds\int_{0}^{t}dv\ (t-s)^{-1/2-H} \\
& \quad \times (t-v)^{-1/2-H} s^{1/2-H}v^{1/2-H} \mathbb{E}^{\ast}\{ (1+|X^{\theta}_{s}|^{p})(1+|X^{\theta}_{v}|^{p}) \} \\
&\leq T^{-2} C(H,\sigma,\theta,K) t^{1-2H}.
\end{align*}
Note that we used (\ref{ineq7}) in the last inequality. 
\end{proof}
Let us denote the four terms in the negligible part by $ N^{T}_{i}(\theta,u) $ $(i=1,2,3,4)$ in order. 

\begin{proposition}\label{lemma7}
Let $K$ be a compact subset of $\mathbb{R}^{m}$. Then we have, for each $i=1,2,3,4$,
\begin{align}\label{limit3}
 | N_{i}^{T}(\theta,u_{T}) | \to^{p} 0
\end{align}
as $T\to\infty$ for any sequence $u_{T}\in K\cap(\varphi_{T}(\theta)^{-1}(\Theta-\theta))$.
\end{proposition}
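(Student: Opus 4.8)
The plan is to dispose of the four terms $N^{T}_{1}(\theta,u),\dots,N^{T}_{4}(\theta,u)$ of the negligible part one at a time: $N^{T}_{1}$ is controlled by Proposition \ref{lemma8}, while $N^{T}_{2}$, $N^{T}_{3}$, $N^{T}_{4}$ are controlled by the estimate on $R^{T}_{t}(\theta,u)$ just established. Throughout I would work on $(\Omega^{\ast},\mathcal{F}^{\ast},\mathbb{P}^{\ast})$ with the process $X^{x_{0},\theta}$ and the Brownian motion $W^{\theta}=W^{\theta}(X^{\theta})$, so that ``$\to^{p}$'' means convergence in $\mathbb{P}^{\ast}$-probability. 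Since the $u_{T}$ form a deterministic sequence, $t\mapsto R^{T}_{t}(\theta,u_{T})(X^{\theta})$ is $(\mathcal{F}^{\ast}_{t})$-adapted; and since $K$ is compact, $\sup_{T}|u_{T}|<\infty$ and, for $T$ large, the segment $\{\theta+\epsilon\varphi_{T}(\theta)u_{T}:\epsilon\in[0,1]\}$ lies in $\Theta$, so all the expressions below make sense.

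For $N^{T}_{1}$ I would first observe that $\varphi_{T}(\theta)=\mathrm{diag}\bigl(T^{\kappa(1,\theta)},\dots,T^{\kappa(m,\theta)}\bigr)$, so that the $(i,j)$ entry of $\varphi_{T}(\theta)\int_{0}^{T}\partial_{\theta}\beta_{t}(\theta)(\partial_{\theta}\beta_{t}(\theta))^{\star}\,dt\,\varphi_{T}(\theta)$ is exactly $I^{T}_{i,j}(\theta)(X^{\theta})$ from (\ref{eq20}). By Proposition \ref{lemma8} each such entry converges in probability to $I_{i,j}(\theta)$, hence the matrix $\varphi_{T}(\theta)\int_{0}^{T}\partial_{\theta}\beta_{t}(\theta)(\partial_{\theta}\beta_{t}(\theta))^{\star}\,dt\,\varphi_{T}(\theta)-I(\theta)$ tends to the zero matrix in probability; bounding $|N^{T}_{1}(\theta,u_{T})|$ by $\frac{1}{2}|u_{T}|^{2}$ times the operator norm of this difference and using $\sup_{T}|u_{T}|<\infty$ gives $N^{T}_{1}(\theta,u_{T})\to^{p}0$.

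For $N^{T}_{4}$ and $N^{T}_{2}$ I would use the bound $\mathbb{E}^{\ast}\{\sup_{u}|R^{T}_{t}(\theta,u)(X^{\theta})|^{2}\}\le C_{K}T^{-2}t^{1-2H}$ from the preceding lemma together with Tonelli:
\[
\mathbb{E}^{\ast}\Bigl\{\sup_{u}|N^{T}_{4}(\theta,u)(X^{\theta})|\Bigr\}\le\frac{1}{2}\int_{0}^{T}\mathbb{E}^{\ast}\Bigl\{\sup_{v}|R^{T}_{t}(\theta,v)(X^{\theta})|^{2}\Bigr\}\,dt\le C_{K}T^{-2}\int_{0}^{T}t^{1-2H}\,dt=O(T^{-2H})\to0,
\]
so $N^{T}_{4}(\theta,u_{T})\to0$ in $L^{1}(\mathbb{P}^{\ast})$; and since the integrand in the stochastic integral $N^{T}_{2}$ is adapted with $\mathbb{E}^{\ast}\int_{0}^{T}(R^{T}_{t}(\theta,u_{T}))^{2}\,dt<\infty$ by the same bound, the It\^o isometry gives $\mathbb{E}^{\ast}\{(N^{T}_{2}(\theta,u_{T}))^{2}\}=\mathbb{E}^{\ast}\int_{0}^{T}(R^{T}_{t}(\theta,u_{T}))^{2}\,dt=O(T^{-2H})\to0$, so $N^{T}_{2}(\theta,u_{T})\to0$ in $L^{2}(\mathbb{P}^{\ast})$. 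Finally, for $N^{T}_{3}$ I would apply Cauchy--Schwarz to bound $|N^{T}_{3}(\theta,u_{T})|$ by the product of $\bigl(u_{T}^{\star}\varphi_{T}(\theta)\int_{0}^{T}\partial_{\theta}\beta_{t}(\theta)(\partial_{\theta}\beta_{t}(\theta))^{\star}\,dt\,\varphi_{T}(\theta)u_{T}\bigr)^{1/2}$, which is $O_{p}(1)$ by the reasoning used for $N^{T}_{1}$, and $\bigl(\int_{0}^{T}(R^{T}_{t}(\theta,u_{T}))^{2}\,dt\bigr)^{1/2}=(2|N^{T}_{4}(\theta,u_{T})|)^{1/2}$, which tends to $0$ in probability; the product is $o_{p}(1)$.

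I do not expect a serious obstacle at this stage: the analytic content has already been extracted in Proposition \ref{lemma8} and in the lemma bounding $R^{T}_{t}(\theta,u)$, and what remains is assembly. The only points needing a little care are recognizing that $\varphi_{T}(\theta)$ is diagonal, so that the normalized quadratic-variation matrix is precisely the object controlled by Proposition \ref{lemma8}, and exploiting the \emph{uniform-in-$u$} form of the $R^{T}_{t}$ estimate, which lets the deterministic sequence $u_{T}$ be absorbed into the supremum without being tracked explicitly.
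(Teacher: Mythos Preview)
Your proposal is correct and follows essentially the same route as the paper's proof: Proposition~\ref{lemma8} handles $N^{T}_{1}$, the It\^{o} isometry together with the $R^{T}_{t}$-bound handles $N^{T}_{2}$, Cauchy--Schwarz reduces $N^{T}_{3}$ to the product of an $O_{p}(1)$ factor (from Proposition~\ref{lemma8}) and an $o_{p}(1)$ factor (from the $R^{T}_{t}$-bound), and $N^{T}_{4}$ is immediate. Your write-up is in fact more explicit than the paper's in two respects---you spell out the diagonal structure of $\varphi_{T}(\theta)$ to identify the matrix in $N^{T}_{1}$ with $(I^{T}_{i,j}(\theta)(X^{\theta}))_{i,j}$, and you carry through the $L^{1}$ estimate for $N^{T}_{4}$ rather than declaring it obvious---but the underlying argument is the same.
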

\begin{proof}
For $i=1$, the limit (\ref{limit3}) follows from Proposition \ref{lemma8}. The term $N_{2}^{T}(\theta,u_{T})$ can be bounded in $L^{2}(\mathbb{P}^{\ast})$:
\begin{align*}
\nonumber \mathbb{E}^{\ast}\left\{ |N_{2}^{T}(\theta,u_{T})|^{2} \right\} &= \int_{0}^{T}dt\ \mathbb{E}^{\ast}\{ | R_{t}^{T}(\theta,u_{T}) |^{2} \} \\
&\leq C(H,\sigma,\theta,K) T^{-2H}.
\end{align*}
Therefore we have $ | N_{2}^{T}(\theta,u_{T}) | = o_{p}(1) $. For the term $N_{3}^{T}(\theta,u_{T})$, we have 
\begin{align}
| N_{3}^{T}(\theta,u_{T}) | \leq (u^{\star}I^{T}(\theta)u)^{1/2}\left( \int_{0}^{T}dt\ |R_{t}^{T}(\theta,u_{T})|^{2} \right)^{1/2}
\end{align}
by H\"{o}lder's inequality. Here $ I^{T}(\theta) $ denotes the matrix $ (I^{T}_{i,j}(\theta))_{i,j=1,\ldots,m} $ (see (\ref{eq20})). We have $ | N_{3}^{T}(\theta,u_{T}) | = o_{p}(1) $ since $ u^{\star}I^{T}(\theta)u = O_{p}(1) $ holds by Proposition \ref{lemma8}. Finally, $ | N_{4}^{T}(\theta,u_{T}) | = o_{p}(1) $ is obvious.
\end{proof}

By Propositions 4.2 and 4.7, we complete the proof of the second part of Theorem \ref{theorem2}.

\subsection{Proof of lemmas in Section 5.1}


First we prove Lemma \ref{corollary1}. Finding a good upper bound of $c_{i,j}(t)$ is a key ingredient for the proof of Lemma $\ref{corollary1}$.

\begin{lemma}\label{lemma6}
There exists a constant $C>0$ that is independent of $t$ such that
\[
|c_{i,j}(t)| \leq C t^{H-3/2}
\]
for all $t\geq1$ and $ i,j=1,\ldots,d $.
\end{lemma}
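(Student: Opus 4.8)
The plan is to reduce everything to the explicit formula (\ref{ineq2}) for the Malliavin derivative of the stationary solution, via a single integration by parts. Write $F_{i}(t):=\partial_{\theta_{i}}a(\bar{X}^{\theta}_{t},\theta)$. By Proposition \ref{proposition3} together with (A3), (A4) and (\ref{ineq23}), each $F_{i}(t)$ lies in $\bigcap_{p>0}\mathbb{D}^{1,p}$ with $DF_{i}(t)=\partial_{x}\partial_{\theta_{i}}a(\bar{X}^{\theta}_{t},\theta)\,D\bar{X}^{\theta}_{t}$, and by stationarity $c_{i,j}(t)=\mathrm{Cov}(F_{i}(t),F_{j}(0))$. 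The duality formula of Malliavin calculus (used already in the discussion of Section \ref{section2.1}) then yields, with $L$ the Ornstein--Uhlenbeck operator,
\[
c_{i,j}(t)=\mathbb{E}^{\ast}\{\langle DF_{i}(t),u_{j}\rangle_{\mathcal{H}}\}=e_{H}\,\mathbb{E}^{\ast}\{\partial_{x}\partial_{\theta_{i}}a(\bar{X}^{\theta}_{t},\theta)\,\langle w_{t},v_{j}\rangle_{L^{2}(\mathbb{R})}\},
\]
where $u_{j}:=-DL^{-1}(F_{j}(0)-\mathbb{E}^{\ast}F_{j}(0))$, $w_{t}:=\mathbf{D}^{1/2-H}_{-}(D\bar{X}^{\theta}_{t})$, $v_{j}:=\mathbf{D}^{1/2-H}_{-}u_{j}$, and we used $\langle f,g\rangle_{\mathcal{H}}=e_{H}\langle\mathbf{D}^{1/2-H}_{-}f,\mathbf{D}^{1/2-H}_{-}g\rangle_{L^{2}(\mathbb{R})}$ and that $\partial_{x}\partial_{\theta_{i}}a(\bar{X}^{\theta}_{t},\theta)$ is a scalar random variable.

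The two factors $w_{t}$ and $v_{j}$ are then estimated separately. For $w_{t}$: as in the proof of Lemma \ref{lemma12}, shift covariance of $D$, commutation of $\tau_{t}$ with $\mathbf{D}^{1/2-H}_{-}$, and stationarity of $\bar{X}^{\theta}$ give $(w_{t})_{x}(\omega)=(\mathbf{D}^{1/2-H}_{-}D\bar{X}^{\theta}_{0})_{x-t}(\vartheta_{t}\omega)$, so the deterministic bound (\ref{ineq22}) translates to $\mathrm{supp}\,w_{t}\subseteq(-\infty,t]$ and, for $t\geq1$ and $x\leq0$ (so that $t-x\geq1$), to $|(w_{t})_{x}|\lesssim(t-x)^{H-3/2}\leq t^{H-3/2}$, uniformly in $\omega$. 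For $v_{j}$: by (\ref{ineq2}) and (\ref{ineq23}), $DF_{j}(0)=c_{j}(\omega)\,D\bar{X}^{\theta}_{0}$ with $|c_{j}|\leq\alpha^{-1}$, hence by (\ref{ineq22}) the transform $\mathbf{D}^{1/2-H}_{-}(DF_{j}(0))$ is, pointwise in $(\omega,x)$, dominated by the deterministic envelope $G_{0}(x):=C\bigl[(-x)^{H-1/2}\mathbf{1}_{[-1,0]}(x)+(-x)^{H-3/2}\mathbf{1}_{(-\infty,-1)}(x)\bigr]\in L^{1}(\mathbb{R})\cap L^{2}(\mathbb{R})$, which is supported on $(-\infty,0]$. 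Writing $-DL^{-1}(F_{j}(0)-\mathbb{E}^{\ast}F_{j}(0))=\int_{0}^{\infty}e^{-r}P_{r}(DF_{j}(0))\,dr$ with $P_{r}$ the Ornstein--Uhlenbeck semigroup, and using that the closed, bounded, convex set $\mathcal{K}:=\{f\in\mathcal{H}:|\mathbf{D}^{1/2-H}_{-}f|\leq G_{0}\text{ a.e.}\}$ is stable under $P_{r}$ (by the Mehler formula $P_{r}$ averages, so it maps $\mathcal{H}$-valued random variables taking values a.s. in $\mathcal{K}$ to the same) and under $\int_{0}^{\infty}e^{-r}(\cdot)\,dr$, one obtains $u_{j}(\omega)\in\mathcal{K}$ for a.e. $\omega$; in particular $v_{j}$ is supported on $(-\infty,0]$ with $\|v_{j}\|_{L^{1}(\mathbb{R})}\leq\|G_{0}\|_{L^{1}(\mathbb{R})}$ almost surely.

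Putting these together, since $v_{j}$ is supported on $(-\infty,0]$ and $|\partial_{x}\partial_{\theta_{i}}a|\leq\alpha^{-1}$,
\[
|c_{i,j}(t)|\leq\alpha^{-1}e_{H}\,\mathbb{E}^{\ast}\Bigl\{\Bigl|\int_{-\infty}^{0}(w_{t})_{x}(v_{j})_{x}\,dx\Bigr|\Bigr\}\leq\alpha^{-1}e_{H}\Bigl(\sup_{x\leq0}|(w_{t})_{x}|\Bigr)\|G_{0}\|_{L^{1}(\mathbb{R})}\lesssim t^{H-3/2}
\]
for $t\geq1$, which is the claim. The main obstacle is the second paragraph, and within it the $v_{j}$ part: verifying that applying $-DL^{-1}$ to $F_{j}(0)$ keeps the transform supported on $(-\infty,0]$ and keeps its $L^{1}$-envelope deterministically bounded. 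This is precisely where the Mehler formula and the invariance of $\mathcal{K}$ under $P_{r}$ are needed, and it is what upgrades the naive Cauchy--Schwarz estimate $|c_{i,j}(t)|\lesssim t^{H-1}$ to the sharp rate $t^{H-3/2}$. The remaining ingredients — the chain rule for $DF_{i}(t)$, the translation of (\ref{ineq22}) to $w_{t}$, and the final Hölder step — are routine.
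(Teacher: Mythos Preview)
Your argument is correct and is essentially the paper's own proof, only packaged differently: both rely on the integration-by-parts formula $c_{i,j}(t)=\mathbb{E}^{\ast}\{\langle DF_{i}(t),-DL^{-1}(F_{j}(0)-\mathbb{E}^{\ast}F_{j}(0))\rangle_{\mathcal{H}}\}$, Mehler's representation of $-DL^{-1}$, the explicit formula (\ref{ineq2}), and the pointwise envelope (\ref{ineq22}). The paper simply writes the Mehler integral out explicitly (Lemma \ref{lemma17}) and bounds $\langle\Psi(t,\cdot)(\omega),\Psi(0,\cdot)(\omega'')\rangle_{\mathcal{H}}$ pointwise in $(\tau,\omega,\omega')$, whereas you abstract the same step as ``$P_{r}$ and $\int_{0}^{\infty}e^{-r}(\cdot)\,dr$ preserve the convex set $\mathcal{K}$''; your use of the shift $\tau_{t}$ to read the bound on $w_{t}$ directly from (\ref{ineq22}) is a mild streamlining of the paper's re-derivation of a three-term estimate for $(\mathbf{D}^{1/2-H}_{-}\Psi(t,\cdot))_{s}$, but the content is identical.
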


\begin{notation}
We denote the Ornstein-Uhlenbeck semigroup and its generater by $(P_{t})_{t\geq0}$ and $L$. The operator $L^{-1}$ denotes the pseudo-inverse of $L$. See Chapter 2 of \cite{nourdin2012normal} for detail.
\end{notation}

We start with the following general lemma.

\begin{lemma}\label{lemma17}
Let $F$ and $G$ be in $\mathbb{D}^{1,2}$ with $\mathbb{E}^{\ast}\{F\} = \mathbb{E}^{\ast}\{G\}=0$. Then the equality 
\begin{align}\label{eq24}
\nonumber \mathbb{E}^{\ast}\{FG\} &= \int_{0}^{\infty}d\tau\,e^{-\tau}\int_{\Omega^{\ast}}\mathbb{P}^{\ast}(d\omega)\int_{\Omega^{\ast}}\mathbb{P}^{\ast}(d\omega^{\prime})\\ 
&\quad\times \langle (DG)(\omega), (DF)(e^{-\tau}\omega+\sqrt{1-e^{-2\tau}}\omega^{\prime}) \rangle_{\mathcal{H}}
\end{align}
holds. 
\end{lemma}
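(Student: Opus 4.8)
The plan is to exploit the classical Mehler-type formula for the Ornstein--Uhlenbeck semigroup together with the integration-by-parts relation $F = -LL^{-1}F$ valid for centered $F\in\mathbb{D}^{1,2}$. First I would write $\mathbb{E}^{\ast}\{FG\} = \mathbb{E}^{\ast}\{ \langle DG, -DL^{-1}F\rangle_{\mathcal{H}} \}$, which is the standard covariance identity (see e.g. Chapter 2 of \cite{nourdin2012normal}): indeed $\mathbb{E}^{\ast}\{FG\} = \mathbb{E}^{\ast}\{ (-LL^{-1}F)G \} = \mathbb{E}^{\ast}\{ \langle DL^{-1}F, DG\rangle_{\mathcal{H}} \}\cdot(-1)$ after integrating by parts against the divergence $\delta = D^{\ast}$, using $-L = \delta D$. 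Then I would represent $-DL^{-1}F$ through the semigroup: since $-L^{-1} = \int_{0}^{\infty} P_{\tau}\,d\tau$ on the orthogonal complement of the constants, and $DP_{\tau} = e^{-\tau}P_{\tau}D$, one gets $-DL^{-1}F = \int_{0}^{\infty} e^{-\tau} P_{\tau}(DF)\,d\tau$, the integral being taken in $L^{2}(\Omega^{\ast};\mathcal{H})$.

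Next I would insert Mehler's formula for $P_{\tau}$ acting on the $\mathcal{H}$-valued random variable $DF$. Mehler's formula states that for a suitable random variable $\Phi$,
\[
(P_{\tau}\Phi)(\omega) = \int_{\Omega^{\ast}} \Phi\bigl(e^{-\tau}\omega + \sqrt{1-e^{-2\tau}}\,\omega^{\prime}\bigr)\, \mathbb{P}^{\ast}(d\omega^{\prime}),
\]
which is legitimate on this probability space because $(\Omega^{\ast},\mathcal{F}^{\ast},\mathbb{P}^{\ast})$ carries the isonormal Gaussian process $B$ over $\mathcal{H}$ and the shift $(\omega,\omega')\mapsto e^{-\tau}\omega + \sqrt{1-e^{-2\tau}}\,\omega'$ is precisely the Ornstein--Uhlenbeck rotation on the product space; one must check that $\omega\mapsto DF(\omega)$ admits a measurable version on $\Omega^{\ast}$ for which the right-hand side makes sense (this is where a little care is needed, since $DF$ is an $\mathcal{H}$-valued object defined only $\mathbb{P}^{\ast}$-a.s., but the standard argument passes to a measurable modification). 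Combining this with the expansion of $\langle DG, -DL^{-1}F\rangle_{\mathcal{H}}$ and taking $\mathbb{E}^{\ast}$ (i.e. integrating in $\omega$), and using Fubini to move the $d\tau$ and $d\omega'$ integrals outside, yields exactly \eqref{eq24}.

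The justification of Fubini is the only genuine obstacle: I must ensure that
\[
\int_{0}^{\infty} d\tau\, e^{-\tau}\, \mathbb{E}^{\ast}\otimes\mathbb{E}^{\ast}\bigl\{ \bigl| \langle DG(\omega), DF(e^{-\tau}\omega+\sqrt{1-e^{-2\tau}}\omega')\rangle_{\mathcal{H}}\bigr| \bigr\} < \infty,
\]
which follows from Cauchy--Schwarz in $\mathcal{H}$ together with the fact that $\omega\mapsto e^{-\tau}\omega+\sqrt{1-e^{-2\tau}}\,\omega'$ preserves $\mathbb{P}^{\ast}$ for fixed $\omega'$ (and symmetrically), so that $\mathbb{E}^{\ast}\otimes\mathbb{E}^{\ast}\{\|DF(e^{-\tau}\omega+\sqrt{1-e^{-2\tau}}\omega')\|_{\mathcal{H}}^{2}\} = \mathbb{E}^{\ast}\{\|DF\|_{\mathcal{H}}^{2}\} < \infty$; the $\tau$-integral then contributes the finite factor $\int_{0}^{\infty}e^{-\tau}\,d\tau = 1$. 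Everything else is a routine assembly of standard Malliavin-calculus identities, so I would state these (the covariance identity, the semigroup representation of $-DL^{-1}$, and Mehler's formula) with references to \cite{nourdin2012normal} and \cite{nualart2006malliavin} rather than reproving them.
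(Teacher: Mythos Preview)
Your proposal is correct and follows essentially the same route as the paper: the covariance identity $\mathbb{E}^{\ast}\{FG\}=\mathbb{E}^{\ast}\{\langle DG,-DL^{-1}F\rangle_{\mathcal{H}}\}$, the semigroup representation $-DL^{-1}F=\int_{0}^{\infty}e^{-\tau}P_{\tau}(DF)\,d\tau$, and then Mehler's formula, all with references to \cite{nourdin2012normal}. The only cosmetic difference is that the paper first establishes \eqref{eq24} for simple $F\in\mathcal{S}$ and passes to general $F\in\mathbb{D}^{1,2}$ by approximation, whereas you handle the passage to the limit via a direct Fubini/measurability argument; both are standard ways to close the same gap.
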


\begin{proof}
By an integration by parts formula for Malliavin calculus (see Theorem 2.9.1 of \cite{nourdin2012normal}), we have 
\[
\mathbb{E}^{\ast}\{FG\} = \mathbb{E}^{\ast}\{\langle DG, -DL^{-1}F \rangle\}.
\]
By Proposition 2.9.3 of \cite{nourdin2012normal}, the random variable $-DL^{-1}F$ can be represented as
\begin{align*}
 -DL^{-1}F &= \int_{0}^{\infty}d\tau\,e^{-\tau}P_{\tau}(DF).
\end{align*}
Hence Mehler's formula yields (\ref{eq24}) for simple random variables $F\in\mathcal{S}$. By an approximation argument, we obtain (\ref{eq24}) for all $F\in\mathbb{D}^{1,2}$.
\end{proof}


\begin{proof}[Proof of Lemma \ref{lemma6}]
Using Lemma \ref{lemma17}, we have
\begin{align*}
c_{i,j}(t) 
&= \int_{0}^{\infty}d\tau\,e^{-\tau}\int_{\Omega^{\ast}}\mathbb{P}^{\ast}(d\omega)\int_{\Omega^{\ast}}\mathbb{P}^{\ast}(d\omega^{\prime})(\partial_{x}\partial_{\theta_{i}}a)(\bar{X}^{\theta}_{t}(\omega),\theta) \\&\quad\times(\partial_{x}\partial_{\theta_{j}}a)(\bar{X}^{\theta}_{0}(e^{-\tau}\omega+\sqrt{1-e^{-2\tau}}\omega^{\prime}),\theta) \\&\quad\times \langle (D\bar{X}^{\theta}_{t})(\omega), (D\bar{X}^{\theta}_{0})(e^{-\tau}\omega+\sqrt{1-e^{-2\tau}}\omega^{\prime}) \rangle_{\mathcal{H}} \\
 &= \int_{0}^{\infty}d\tau\,e^{-\tau}\int_{\Omega^{\ast}}\mathbb{P}^{\ast}(d\omega)\int_{\Omega^{\ast}}\mathbb{P}^{\ast}(d\omega^{\prime})(\partial_{x}\partial_{\theta_{i}}a)(\bar{X}^{\theta}_{t}(\omega),\theta) \\&\quad\times(\partial_{x}\partial_{\theta_{j}}a)(\bar{X}^{\theta}_{0}(e^{-\tau}\omega+\sqrt{1-e^{-2\tau}}\omega^{\prime}),\theta) \\&\quad\times \langle \Psi(t,\cdot)(\omega), \Psi(0,\cdot)(e^{-\tau}\omega+\sqrt{1-e^{-2\tau}}\omega^{\prime}) \rangle_{\mathcal{H}}.
\end{align*}
Recall that the random function $\Psi(t,\cdot)$ is defined by
\[
 \Psi(t,r) = \sigma \mathbf{1}_{(-\infty,t]}(r) e^{\int_{r}^{t}(\partial_{x}a)(\bar{X}^{\theta}_{u},\theta)\,du}.
\]
The random function $(\mathbf{D}^{1/2-H}_{-}\Psi(t,\cdot))_{s}$ is decomposed as 
\begin{align*}
(\mathbf{D}^{1/2-H}_{-}\Psi(t,\cdot))_{s} &= \int_{0}^{\infty}d\xi\,\xi^{H-3/2}(\Psi(t,s)-\Psi(t,s+\xi)) \\
&= \int_{0}^{t-s}d\xi\,\xi^{H-3/2}(\Psi(t,s)-\Psi(t,s+\xi)) \\
&\quad + \int_{t-s}^{\infty}d\xi\,\xi^{H-3/2}\Psi(t,s) \\
&= \mathbf{1}_{(t-1,t]}(s)\int_{0}^{t-s}d¥\xi\,\xi^{H-3/2} (\Psi(t,s)-\Psi(t,s+\xi)) \\
&\quad +\mathbf{1}_{(-\infty,t-1]}(s)\int_{0}^{t-s}d¥\xi\,\xi^{H-3/2} (\Psi(t,s)-\Psi(t,s+\xi))\\
&\quad +(1/2-H)^{-1}(t-s)^{1/2-H}\Psi(t,s) \\
&= \mathbf{1}_{(t-1,t]}(s)\int_{0}^{t-s}d¥\xi\,\xi^{H-3/2} (\Psi(t,s)-\Psi(t,s+\xi)) \\
&\quad +\mathbf{1}_{(-\infty,t-1]}(s)\int_{0}^{1}d¥\xi\,\xi^{H-3/2} (\Psi(t,s)-\Psi(t,s+\xi))\\
&\quad +\mathbf{1}_{(-\infty,t-1]}(s)\int_{1}^{t-s}d¥\xi\,\xi^{H-3/2} (\Psi(t,s)-\Psi(t,s+\xi))\\
&\quad +(1/2-H)^{-1}(t-s)^{1/2-H}\Psi(t,s).
\end{align*}
Hence if $s\leq0$ and $t\geq1$, then
\begin{align*}
|(\mathbf{D}^{1/2-H}_{-}\Psi(t,\cdot))_{s}| &\leq \mathbf{1}_{(-\infty,t-1]}(s)\int_{0}^{1}d\xi\,\xi^{H-3/2}(\Psi(t,s+\xi)-\Psi(t,s))\\
&\quad + \mathbf{1}_{(-\infty,t-1]}(s)\int_{1}^{t-s}d\xi\,\xi^{H-3/2}\Psi(t,s+\xi)\\
&\quad + (1/2-H)^{-1}(t-s)^{H-1/2}\Psi(t,s) \\
& \lesssim (e^{-\alpha t}\mathbf{1}_{(-\infty,t-1]}(s)+(t-s)^{H-3/2}\mathbf{1}_{(-\infty,t-1]}(s) \\
&\quad +(t-s)^{H-1/2}e^{-\alpha(t-s)}\mathbf{1}_{(-\infty,t]}(s))
\end{align*} 
holds. Therefore we have, for $t\geq1$,
\begin{align*}
&|\langle\Psi(t,\cdot)(\omega),\Psi(0,\cdot)(e^{-\tau}\omega+\sqrt{1-e^{-2\tau}}\omega^{\prime})\rangle_{\mathcal{H}}| \\
&\leq \int_{\mathbb{R}}|(\mathbf{D}^{1/2-H}_{-}\Psi(t,\cdot)(\omega))_{s}|| (\mathbf{D}^{1/2-H}_{-}\Psi(0,\cdot)(e^{-\tau}\omega+\sqrt{1-e^{-2\tau}}\omega^{\prime}))_{s} |\,ds \\
&\lesssim \biggl(e^{-\alpha t}\int_{-\infty}^{0}| \mathbf{D}^{1/2-H}_{-}\Psi(0,\cdot)(e^{-\tau}\omega+\sqrt{1-e^{-2\tau}}\omega^{\prime})_{s} |\,ds \\
&\quad + \int_{-\infty}^{0}| \mathbf{D}^{1/2-H}_{-}\Psi(0,\cdot)(e^{-\tau}\omega+\sqrt{1-e^{-2\tau}}\omega^{\prime})_{s} |(t-s)^{H-3/2}\,ds\\
&\quad +e^{-\alpha t}\int_{-\infty}^{0}| \mathbf{D}^{1/2-H}_{-}\Psi(0,\cdot)(e^{-\tau}\omega+\sqrt{1-e^{-2\tau}}\omega^{\prime})_{s} |(t-s)^{H-1/2}e^{\alpha s}\,ds \biggr)\\
&\lesssim \left(\int_{-\infty}^{0}| \mathbf{D}^{1/2-H}_{-}\Psi(0,\cdot)(e^{-\tau}\omega+\sqrt{1-e^{-2\tau}}\omega^{\prime})_{s} |\,ds\right) t^{H-3/2}\\
&\lesssim t^{H-3/2}.
\end{align*}
Note that we used the inequality (\ref{ineq22}) in the last line. This completes the proof.
\end{proof}

\begin{proof}[Proof of Lemma \ref{corollary1}]
\noindent \textit{(1)} We have 
\begin{align*}
&\int_{0}^{\infty}dr\int_{0}^{\infty}du\ r^{-H-1/2}u^{-H-1/2}|c_{i,j}(|r-u|)| \\
&= 2 \int_{0}^{\infty}du \int_{0}^{u}dr \ r^{-H-1/2}u^{-H-1/2} |c_{i,j}(u-r)| \\
&= 2 \int_{0}^{\infty}du\int_{0}^{u}dr\ (u-r)^{-H-1/2}u^{-H-1/2} |c_{i,j}(r)| \\
&= 2 \int_{0}^{\infty}dr\ |c_{i,j}(r)|\int_{r}^{\infty}du\ (u-r)^{-H-1/2}u^{-H-1/2} \\
&= 2 \int_{0}^{\infty}dr\ |c_{i,j}(r)|r^{-2H}\int_{1}^{\infty}du\ (u-1)^{-H-1/2}u^{-H-1/2}.
\end{align*}
The integral $ \int_{0}^{\infty}dr\ |c_{i,j}(r)|r^{-2H} $ converges by Lemma \ref{lemma6}. 

\noindent \textit{(2)} By the change of variable, we obtain
\begin{align*}
&\mathbb{E}^{\ast}\left\{T^{-1} \int_{0}^{T}\gamma_{t}^{2,i}(\theta)\gamma_{t}^{2,j}(\theta)\,dt \right\} \\
&= \sigma^{-2}\bar{d}_{H}^{-2}T^{-1}\int_{0}^{T}dt\int_{0}^{t}dr\int_{0}^{t}du\ r^{-H-1/2}u^{-H-1/2}c_{i,j}(|r-u|) \\
&= \sigma^{-2}\bar{d}_{H}^{-2}\int_{0}^{1}dt\int_{0}^{tT}dr\int_{0}^{tT}du\ r^{-H-1/2}u^{-H-1/2}c_{i,j}(|r-u|).
\end{align*}
By (\ref{ineq14}) and the dominated convergence theorem, we obtain (\ref{eq22}).

\noindent \textit{(3)} As in the proof of (\ref{eq22}), we have
\begin{align*}
&\mathbb{E}^{\ast}\left\{T^{-1} \int_{0}^{T}\gamma_{t}^{3,i}(\theta)^{2}\,dt \right\} \\
&= \sigma^{-2}\bar{d}_{H}^{-2}\int_{0}^{1}dt\int_{0}^{tT}dr\int_{0}^{tT}du\ r^{-H-1/2}u^{-H-1/2}\left\{1-\left(1-\frac{r}{tT}\right)^{1/2-H}\right\}\\
&\quad \times\left\{1-\left(1-\frac{r}{tT}\right)^{1/2-H}\right\} c_{i,j}(|r-u|).
\end{align*}
Again by (\ref{ineq14}) and the dominated convergence theorem, (\ref{eq23}) follows.
\end{proof}

Now we turn to prove Lemmas \ref{lemma10} and \ref{lemma11}. The proofs are by straightforward calculation.

\begin{proof}[Proof of Lemma \ref{lemma10}]
By a straightforward calculation, we obtain
\begin{align*}
&\| DJ_{i,j}^{(a),T}(\theta) \|_{\mathcal{H}}^{2} \\
&= \sigma^{-4}\bar{d}_{H}^{-4}T^{-2}\int_{0}^{T}dt_{1}\int_{0}^{T}dt_{2}\int_{0}^{t_{1}}dr_{1}\int_{0}^{t_{1}}du_{1}\int_{0}^{t_{2}}dr_{2}\int_{0}^{t_{2}}du_{2}\Bigl\{ \\
&\quad (\partial_{x}\partial_{\theta_{i}}a)(\bar{X}^{\theta}_{r_{1}},\theta)(\partial_{\theta_{j}}a)(\bar{X}^{\theta}_{u_{1}},\theta)(\partial_{x}\partial_{\theta_{i}}a)(\bar{X}^{\theta}_{r_{2}},\theta)(\partial_{\theta_{j}}a)(\bar{X}^{\theta}_{u_{2}},\theta) \\ 
&\quad \times\langle D\bar{X}^{\theta}_{r_{1}},D\bar{X}^{\theta}_{r_{2}} \rangle_{\mathcal{H}} \\
&\quad + (\partial_{x}\partial_{\theta_{i}}a)(\bar{X}^{\theta}_{r_{1}},\theta)(\partial_{\theta_{j}}a)(\bar{X}^{\theta}_{u_{1}},\theta)(\partial_{x}\partial_{\theta_{j}}a)(\bar{X}^{\theta}_{u_{2}},\theta)(\partial_{\theta_{i}}a)(\bar{X}^{\theta}_{r_{2}},\theta) \\
&\quad \times\langle D\bar{X}^{\theta}_{r_{1}},D\bar{X}^{\theta}_{u_{2}} \rangle_{\mathcal{H}} \\
&\quad + (\partial_{x}\partial_{\theta_{j}}a)(\bar{X}^{\theta}_{u_{1}},\theta)(\partial_{\theta_{i}}a)(\bar{X}^{\theta}_{r_{1}},\theta)(\partial_{x}\partial_{\theta_{i}}a)(\bar{X}^{\theta}_{r_{2}},\theta)(\partial_{\theta_{j}}a)(\bar{X}^{\theta}_{u_{2}},\theta)\\
&\quad \times\langle D\bar{X}^{\theta}_{u_{1}},D\bar{X}^{\theta}_{r_{2}} \rangle_{\mathcal{H}} \\
&\quad + (\partial_{x}\partial_{\theta_{j}}a)(\bar{X}^{\theta}_{u_{1}},\theta)(\partial_{\theta_{i}}a)(\bar{X}^{\theta}_{r_{1}},\theta)(\partial_{x}\partial_{\theta_{j}}a)(\bar{X}^{\theta}_{u_{2}},\theta)(\partial_{\theta_{i}}a)(\bar{X}^{\theta}_{r_{2}},\theta)\\
&\quad \times\langle D\bar{X}^{\theta}_{u_{1}},D\bar{X}^{\theta}_{u_{2}} \rangle_{\mathcal{H}} \Bigr\}\\
&\quad \times (t_{1}-r_{1})^{-H-1/2}(t_{1}-u_{1})^{-H-1/2} (t_{2}-r_{2})^{-H-1/2} (t_{2}-u_{2})^{-H-1/2}.
\end{align*}
Let us set $\tilde{c}(t)=\mathbb{E}^{\ast}\{|\langle D\bar{X}^{\theta}_{t}, D\bar{X}^{\theta}_{0} \rangle_{\mathcal{H}}|^{2}\}^{1/2}$. As in the proof of Lemma \ref{lemma6}, there exists a positive constant $C$ that is independent of $t$ such that the inequality
\[
\tilde{c}(t) \leq Ct^{H-3/2}
\]
holds for each $t\geq1$. Hence it holds that
\begin{align*}
&\mathbb{E}^{\ast}\{\| DJ_{i,j}^{(a),T}(\theta) \|_{\mathcal{H}}^{2}\} \\
&\lesssim T^{-2}\int_{0}^{T}dt_{1}\int_{0}^{T}dt_{2}\int_{0}^{t_{1}}dr_{1}\int_{0}^{t_{1}}du_{1}\int_{0}^{t_{2}}dr_{2}\int_{0}^{t_{2}}du_{2} \bigl\{ \\
 &\quad \tilde{c}(|r_{1}-r_{2}|) + \tilde{c}(|r_{1}-u_{2}|) + \tilde{c}(|u_{1}-r_{2}|) + \tilde{c}(|u_{1}-u_{2}|) \bigr\} \\
 &\quad \times (t_{1}-r_{1})^{-H-1/2}(t_{1}-u_{1})^{-H-1/2} (t_{2}-r_{2})^{-H-1/2} (t_{2}-u_{2})^{-H-1/2} \\
 &= 4T^{-2}\int_{0}^{T}dt_{1}\int_{0}^{T}dt_{2}\int_{0}^{t_{1}}dr_{1}\int_{0}^{t_{1}}du_{1}\int_{0}^{t_{2}}dr_{2}\int_{0}^{t_{2}}du_{2}\ \tilde{c}(|r_{1}-r_{2}|) \\
 &\quad \times (t_{1}-r_{1})^{-H-1/2}(t_{1}-u_{1})^{-H-1/2} (t_{2}-r_{2})^{-H-1/2} (t_{2}-u_{2})^{-H-1/2} \\
 &= 4(1/2-H)^{-2}T^{-2}\int_{0}^{T}dt_{1}\int_{0}^{T}dt_{2}\ t_{1}^{1/2-H}t_{2}^{1/2-H} \\
 &\quad \times \int_{0}^{t_{1}}dr_{1}\int_{0}^{t_{2}}dr_{2} (t_{1}-r_{1})^{-H-1/2} (t_{2}-r_{2})^{-H-1/2}\tilde{c}(|r_{1}-r_{2}|) \\
 &= 4(1/2-H)^{-2}T^{-2} \int_{0}^{T}dr_{1}\int_{0}^{T}dr_{2}\ \tilde{c}(|r_{1}-r_{2}|)\\
 &\quad \times \int_{r_{1}}^{T}dt_{1}\ (t_{1}-r_{1})^{-H-1/2}t_{1}^{1/2-H} \int_{r_{2}}^{T}dt_{2}\ (t_{2}-r_{2})^{-H-1/2}t_{2}^{1/2-H} \\
 &\leq 4(1/2-H)^{-4}T^{-1-2H} \int_{0}^{T}dr_{1}\int_{0}^{T}dr_{2}\ \tilde{c}(|r_{1}-r_{2}|) \\
&\quad\times(T-r_{1})^{1/2-H}(T-r_{2})^{1/2-H} \\
 &= 8(1/2-H)^{-4}T^{-1-2H} \int_{0}^{T}dr_{2}\int_{0}^{r_{2}}dr_{1}\ \tilde{c}(r_{2}-r_{1}) r_{1}^{1/2-H}r_{2}^{1/2-H} \\
 &= 8(1/2-H)^{-4}T^{-1-2H} \Biggl( \int_{0}^{1}dr_{2}\int_{0}^{r_{2}}dr_{1}\ \tilde{c}(r_{2}-r_{1}) r_{1}^{1/2-H}r_{2}^{1/2-H} \\
 &\quad + \int_{1}^{T}dr_{2}\int_{0}^{r_{2}-1}dr_{1}\ \tilde{c}(r_{2}-r_{1})r_{1}^{1/2-H}r_{2}^{1/2-H}  \\ &\quad + \int_{1}^{T}dr_{2}\int_{r_{2}-1}^{r_{2}}dr_{1}\ \tilde{c}(r_{2}-r_{1})r_{1}^{1/2-H}r_{2}^{1/2-H} \Biggr).
\end{align*}
Let us denote the last three terms by $\mathtt{I}_{1}^{(a),T}(\theta)$, $\mathtt{I}_{2}^{(a),T}(\theta)$ and $\mathtt{I}_{3}^{(a),T}(\theta)$. It is clear that the term $\mathtt{I}_{1}^{(a),T}(\theta) = O(T^{-1-2H})$. The term $\mathtt{I}_{2}^{(a),T}(\theta)$ can be bounded as
\begin{align*}
\mathtt{I}_{2}^{(a),T}(\theta) &\lesssim T^{-1-2H} \int_{1}^{T}dr_{2}\ r_{2}^{1-2H}\int_{0}^{r_{2}-1}dr_{1}\ (r_{2}-r_{1})^{H-3/2} \\
&= O(T^{1-4H}).
\end{align*}
Finally, the term $\mathtt{J}_{3}^{(a),T}(\theta)$ is bounded as
\begin{align*}
\mathtt{I}_{3}^{(a),T}(\theta) &\lesssim T^{-1-2H} \int_{0}^{T}dr_{2}\ r_{2}^{1-2H} \\
&= O(T^{1-4H}).
\end{align*}
Since we assume $H\in(1/4,1/2)$, we obtain the limit (\ref{limit4}).
\end{proof}

\begin{proof}[Proof of Lemma \ref{lemma11}]
For notational simplicity, we set 
\[
C(\sigma,H,\theta) = \sigma^{-2}\bar{d}_{H}^{-2}B(-H+1/2,-H+3/2)\mathbb{E}^{\ast}\{(\partial_{\theta_{j}}a)(\bar{X}^{\theta}_{0},\theta)\}.
\]
As in the proof of Lemma \ref{lemma10}, we have 
\begin{align*}
&\| DJ_{i,j}^{(b),T}(\theta) \|_{\mathcal{H}}^{2} \\
&= \frac{C(\sigma,H,\theta)^{2}}{T^{3-2H}}\int_{0}^{T}dt_{1}\int_{0}^{T}dt_{2}\int_{0}^{t_{1}}dr_{1}\int_{0}^{t_{2}}dr_{2}\ (t_{1}-r_{1})^{-H-1/2}r_{1}^{1/2-H} \\
&\quad \times (t_{2}-r_{2})^{-H-1/2}r_{2}^{1/2-H}  (\partial_{x}\partial_{\theta_{i}}a)(\bar{X}_{r_{1}}^{\theta},\theta) (\partial_{x}\partial_{\theta_{i}}a)(\bar{X}_{r_{2}}^{\theta},\theta) \langle D\bar{X}^{\theta}_{r_{1}}, D\bar{X}^{\theta}_{r_{2}} \rangle_{\mathcal{H}}.
\end{align*}
Therefore we obtain 
\begin{align*}
&\mathbb{E}^{\ast}\{\| DJ_{i,j}^{(b),T}(\theta) \|_{\mathcal{H}}^{2}\} \\
&\leq \frac{C(\sigma,H,\theta)^{2}}{T^{3-2H}}\int_{0}^{T}dt_{1}\int_{0}^{T}dt_{2}\int_{0}^{t_{1}}dr_{1}\int_{0}^{t_{2}}dr_{2}\ (t_{1}-r_{1})^{-H-1/2}r_{1}^{1/2-H}\\
&\quad \times (t_{2}-r_{2})^{-H-1/2}r_{2}^{1/2-H}\tilde{c}(|r_{1}-r_{2}|) \\
&= \frac{(1/2-H)^{2}C(\sigma,H,\theta)^{2}}{T^{3-2H}}\int_{0}^{T}dr_{1}\int_{0}^{T}dr_{2}\ (T-r_{1})^{1/2-H}(T-r_{2})^{1/2-H}\\ 
&\quad \times r_{1}^{1/2-H}r_{2}^{1/2-H}\tilde{c}(|r_{1}-r_{2}|) \\
&\leq \frac{(1/2-H)^{2}C(\sigma,H,\theta)^{2}}{T^{2}}\int_{0}^{T}dr_{1}\int_{0}^{T}dr_{2}\ r_{1}^{1/2-H}r_{2}^{1/2-H}\tilde{c}(|r_{1}-r_{2}|)\\
&\leq o(1) + \frac{2(1/2-H)^{2}C(\sigma,H,\theta)^{2}}{T^{2}}\int_{1}^{T}dr_{2}\int_{1}^{r_{2}}dr_{1}\ (r_{2}-r_{1})^{1/2-H}\\ 
&\quad \times r_{2}^{1/2-H}r_{1}^{H-3/2}\\
&\leq o(1) + \frac{2(1/2-H)^{2}C(\sigma,H,\theta)^{2}}{T^{2}}(2-2H)^{-1}T^{2-2H}.
\end{align*}
This completes the proof.
\end{proof}


\bibliographystyle{abbrvnat}
\bibliography{foulan} 

\end{document}